\documentclass{amsart}

\usepackage{amsmath,amsthm,amssymb,amsfonts}
\usepackage{bbm}
\usepackage[mathscr]{eucal}
\usepackage[english]{babel}
\usepackage{graphicx}

\allowdisplaybreaks[1]

\newtheorem{theorem}{Theorem}[section]
\newtheorem{lemma}[theorem]{Lemma}

\newtheorem{proposition}[theorem]{Proposition}
\newtheorem{definition}[theorem]{Definition}

\newtheorem{remark}[theorem]{Remark}

\numberwithin{equation}{section}

\def\bB{{\mathbb B}}

\def\bN{{\mathbb N}}

\def\bR{{\mathbb R}}
\def\bS{{\mathbb S}}
\def\bT{{\mathbb T}}

\def\R{{\mathbb R}}


\def\cD{\mathcal{D}}
\def\cE{\mathcal{E}}

\def\cH{\mathcal{H}}

\def\cP{\mathcal{P}}
\def\cQ{\mathcal{Q}}

\def\cV{\mathcal{V}}

\def\supp{\operatorname{supp}}
\def\Exp{\operatorname{Exp}}
\def\diam{\operatorname{diam}}
\def\diver{\operatorname{div}}

\def\eps{\varepsilon}

\def\ONE{{\mathbbm 1}}
\def\tw{\tilde{w}}
\def\ww{\breve{w}}
\def\tcP{\tilde{\cP}}
\def\tcV{\tilde{\cV}}
\def\qq{q}

 \begin{document}

\title[Gaussian bounds for heat kernels on the ball and simplex]
{Gaussian bounds for the weighted heat kernels \\on the interval, ball and simplex}

\author[G. Kerkyacharian]{Gerard Kerkyacharian}
\address{LPSM, CNRS-UMR 7599, and Crest}
\email{kerk@math.univ-paris-diderot.fr}

\author[P. Petrushev]{Pencho Petrushev}
\address{Department of Mathematics, University of South Carolina, Columbia, SC 29208}
\email{pencho@math.sc.edu}

\author[Y. Xu]{Yuan Xu}
\address{Department of Mathematics, University of Oregon, Eugene, Oregon 97403-1222}
\email{yuan@math.uoregon.edu}

\subjclass[2010]{42C05, 35K08}
\keywords{Heat kernel, Gaussian bounds, orthogonal polynomials, ball, simplex}
\thanks{The first author has been supported by ANR Forewer.
The second author has been supported by NSF Grant DMS-1714369.
The third author has been supported by NSF Grant DMS-1510296.}
\thanks{Corresponding author: Pencho Petrushev, E-mail: pencho@math.sc.edu}

\begin{abstract}
The aim of this article is to establish two-sided Gaussian bounds for the heat kernels on the unit ball and simplex in $\R^n$,
and in particular on the interval,
generated by classical differential operators whose eigenfunctions are algebraic polynomials.
To this end we develop a general method that employs the natural relation of such operators with weighted
Laplace operators on suitable subsets of Riemannian manifolds
and the existing general results on heat kernels.
Our general scheme allows to consider heat kernels in the weighted cases on the interval, ball, and simplex with parameters in the full range.
\end{abstract}

\date{January 20, 2018}

\maketitle

\tableofcontents

\section{Introduction}

We establish two-sided Gaussian bounds for the heat kernels
generated by classical differential operators in weighted cases
on the unit ball and simplex in $\bR^n$ and, in particular on the interval,
whose eigenfunctions are algebraic polynomials.
One of our principle examples is the operator
\begin{equation}\label{def-L-ball-intro}
L:=\sum_{i=1}^{n}\partial_i^2 - \sum_{i=1}^n \sum_{j=1}^n x_ix_j  \partial_{i}\partial_j  - (n+2\gamma)\sum_{i=1}^{n} x_i\partial_i,
\quad \gamma >-1/2,
\end{equation}
on the unit ball $\bB^n\subset\bR^n$ 
equipped with the measure
$d\mu(x) := (1-\| x\|^2)^{\gamma-1/2} dx$
and the distance
\begin{equation*}
\rho(x,y) := \arccos \big(x\cdot y + \sqrt{1-\| x\|^2}\sqrt{1-\| y\|^2}\big),
\end{equation*}
where $x\cdot y$ is the inner product of $x, y\in \bR^n$
and $\|x\|$ is the Euclidean norm of $x$.
As will be seen the operator $L$ is symmetric and $-L$ is positive.

Denote by $\tilde{\cV}_k$ the set of all algebraic polynomials of degree $k$ that are orthogonal
in $L^2(\bB^n, \mu)$ to lower degree polynomials and let $\tilde{\cV}_0$ be the set of all constants.
As is well known (see e.g. \cite[\S2.3.2]{DX}) $\tilde{\cV}_k$, $k=0, 1, \dots$, are eigenspaces of the operator $L$,
namely,
\begin{equation*}
L\tilde P=-\lambda_k \tilde P,\quad \forall \tilde P\in\tilde{\cV}_k, \;\;\hbox{where $\lambda_k:= k(k+n+2\gamma-1)$}.
\end{equation*}
Let $\tilde P_k(x, y)$ be the kernel of the orthogonal projector onto $\tilde{\cV}_k$.
Then the semigroup $e^{tL}$, $t>0$, generated by $L$ has a (heat) kernel $e^{tL}(x,y)$ of the form
\begin{equation*} 
e^{tL}(x,y)=\sum_{k=0}^\infty e^{-\lambda_k t}\tilde P_k(x, y).
\end{equation*}
We establish two-sided Gaussian bounds on $e^{tL}(x,y)$ of the form:
\begin{equation} \label{gauss-intro}
\frac{c_1\exp\{- \frac{\rho(x,y)^2}{c_2t}\}}{\big[V(x, \sqrt t) V(y, \sqrt t)\big]^{1/2}}
\le  e^{tL}(x,y)
\le \frac{c_3\exp\{- \frac{\rho(x,y)^2}{c_4t}\}}{\big[V(x, \sqrt t) V(y, \sqrt t)\big]^{1/2}}.
\end{equation}
Here $V(x, r):=\mu(B(x, r))$ is the volume of the ball $B(x, r)$ centered at $x$ of radius~$r$.
It is important to point out that in the literature the parameter $\gamma$ in \eqref{def-L-ball-intro}
is invariably restricted to $\gamma\ge 0$. Our method allows to operate in the full range $\gamma>-1/2$.

We obtain a similar result on the simplex
$\bT^n:=\big\{x \in \bR^n: x_i > 0, |x|< 1\big\}$,
$|x|:= \sum_i x_i$,
with weight
$\prod_{i=1}^{n} x_{i}^{\kappa_i-1/2}(1-|x|)^{\kappa_{n+1}-1/2}$, $\kappa_i >-1/2$,
and as a consequence for the Jacobi heat kernel
on $[-1, 1]$ with weight $(1-x)^\alpha(1+x)^\beta$, $\alpha, \beta > -1$.

Note that two-sided Gaussian bounds for the Jacobi heat kernel are also established in \cite[Theorem~7.2]{CKP}.
In \cite{NS} Nowak and Sj\"{o}gren obtained this result in the case when $\alpha, \beta \ge -1/2$
via a direct method using special functions.

In \cite{KPX} we derived two-sided Gaussian bounds for the heat kernels
on the ball and simplex as in \eqref{gauss-intro} from the Jacobi case
under the restrictions $\gamma\ge 0$ for the ball and $\kappa_i\ge 0$ for the simplex.

To prove our results on the ball and simplex we first develop a general method that employs the natural relation
between differential operators on open relatively compact subsets of $\R^n$
whose eigenfunctions are algebraic polynomials and
weighted Laplace operators on respective subsets of Riemannian manifolds
and then utilize existing results on two-sided Gaussian bounds for heat kernels on manifolds.
Our development heavily relies on a general result of Gyrya and Saloff-Coste from \cite{GS}
on the heat kernel in Harnack-type Dirichlet spaces with Neumann boundary conditions in inner uniform domains.
We apply the result from \cite{GS} in the particular case of a bilinear Dirichlet form generated by weighted Laplacian on
an open relatively compact convex subset of a ``good" Riemannian manifold.
In the process we establish some basic properties of convex subsets of Riemannian manifods.
In particular, we show that any open relatively compact convex subset of a Riemannian manifold
is an inner uniform domain.
As a result we establish Gaussian bounds on the related heat kernels just as in \eqref{gauss-intro}

A crucial step in this undertaking is to show that the classical differential operators
of interest on the ball or simplex whose eigenfunctions are algebraic polynomials
are naturally related through charts to weighted Laplace operators
on appropriate subsets of the unit sphere in $\bR^{n+1}$, considered as a Riemannian manifold.
This intimate relation enables us to deploy our general result and show that
an operator $L$ like these is essentially self-adjoint and $-L$ is positive,
and more importantly that the associated semigroup $e^{tL}$ has a (heat) kernel with two-sided Gaussian bounds
as in \eqref{gauss-intro}.

It is an open problem to identify other particular settings where the utilization of our method can
produce Gaussian bounds for the respective heat kernels.

The two-sided Gaussian bounds on heat kernels have a great deal of applications
in Harmonic Analysis, PDEs, Probability, and elsewhere.
For example, as is shown in \cite{CKP, GKPP} they allow to develop the theory of Besov and Triebel-Lizorkin spaces
with complete range of indices in the setting of Dirichlet spaces with doubling measure and local Poincar\'{e} inequality.
The Gaussian heat kernel estimates from this article imply that the results from \cite{CKP, GKPP} generalize
the ones on the interval, ball, and simplex from \cite{PX1, PX2, KPX1, KPX2, IPX1}.
Furthermore, these results break new ground
in allowing to extend all results from \cite{PX1, PX2, KPX1, KPX2, IPX1} to the full range of the parameters
of the weights.

An interesting specific consequence of the upper Gaussian bound on heat kernels is the {\em finite speed propagation property},
which plays an important role e.g. in the development of smooth functional calculus in \cite{GKPP}.
This important property is not well known for the interval, ball or simplex.
We state it on the ball in \S\ref{sec:ball}.
This property is essentially used in \cite{KyrP} for the construction of frames on the ball
with small shrinking supports.

The organization of the paper is as follows.
In \S\ref{sec:general} we develop our general method for establishing two-sided Gaussian bounds
for heat kernels associated with differential operators that are realizations of weighted Laplace operators
on suitable charts of Riemannian manifolds.
This include the presentation of the need result by Gyrya and Saloff-Coste \cite{GS} in the specific case of Riemannian manifolds,
establishment of basic properties of convex subsets of Riemannian manifolds,
development of our setting,
and the proof of the main result.
In \S\ref{sec:ball} we apply our general result from \S\ref{sec:general} to obtain
two-sided Gaussian bounds for the weighted heat kernel on the unit ball in $\R^n$.
We also present some consequences of this result.
In \S\ref{sec:simplex} we obtain two-sided Gaussian bounds on the weighted heat kernel on the simplex in $\R^n$.
Finally, in \S\ref{sec:jacobi} we derive Gaussian bounds for the Jacobi heat kernel from the case of the simplex.

\smallskip

\noindent
{\em Notation:}
The following notation will be useful
$a\wedge b := \min\{a, b\}$,
$a\vee b := \max\{a, b\}$.
Positive constants will be denoted by $c, c', c_0, c_1, \dots$ and they may very at every occurrence;
$a\sim b$ will stand for $c_1\le a/b\le c_2$.
Most constants will depend on some parameters that will be clear from the context.

In this article all functions that we deal with are assumed to be real-valued.

\section{General result on heat kernels with Gaussian bounds}\label{sec:general}

In this section we develop our idea for establishing two-sided Gaussian bounds on heat kernels
generated by operators that are realizations of weighted Laplace operators
in local coordinates on suitable charts of Riemannian manifolds.

\subsection{Heat kernel on Riemannian manifolds and their open convex subsets}\label{subsec:heat-ker}

As was explained in the introduction it will be critical for our development that
the operator $L$ of interest is a realization of a weighted Laplace operator in local coordinates
on a suitable chart of a Riemanian manifold.
In this section we collect all facts that we need on Riemannian manifolds.
We refer the reader to \cite{Grig} for details.

\subsubsection{Heat kernel on Riemannian manifolds}\label{subsubsec:riem-manifold}

Assume that $M$ is a complete $n$-dimensional Riemannian manifold
and let $\nu$ be the Riemannian measure.
As usual the distance on $M$ will be the geodesic distance $d(\cdot, \cdot)$ on M.
We denote by $V(x, r)$ the volume of the ball of radius $r>0$ centered at $x\in M$, that is,
\begin{equation*}
V(x, r):= \nu(B(x, r)), \quad B(x, r):=\{y\in M: d(y, x)<r\}.
\end{equation*}

As usual we denote by $T_xM$ the tangent space of $M$ at $x$ and by $T_x^*M$ its dual.
Set $TM:=\cup_x T_xM$.
We denote by $g(x)(\cdot, \cdot)$ the Riemannian metric tensor.
This is a symmetric positive definite bilinear form on $T_xM$ that depends smoothly on $x\in M$.
Then
\begin{equation}\label{def-inner-prod}
\langle \xi, \eta\rangle_g := g(x)(\xi, \eta), \quad \xi, \eta\in T_xM,
\end{equation}
is an inner product on $T_xM$.
Denote $|\xi|_g:= \sqrt{\langle \xi, \xi\rangle_g}$.

Denote by $C(M)$ be the space of continuous functions on $M$ and
by $C_c(M)$ the space of all functions $f\in C(M)$ with compact support.
Also, denote
\begin{equation}\label{def:DM}
\cD(M):= C^\infty(M) \cap C_c(M).
\end{equation}
Further, we denote by
$\overrightarrow{C}^\infty(M)$ the space of smooth vector fields $\vec{v}\in TM$
and by
$\overrightarrow{\cD}(M)$ the space of all $\vec{v} \in \overrightarrow{C}^\infty(M)$ with compact support.

The gradient and divergence operators will be denoted by $\nabla$ and $\diver$.
As is well known
$\nabla :  C^\infty(M) \mapsto \overrightarrow{C}^\infty(M)$
and
$\diver:  \overrightarrow{C}^\infty(M) \mapsto   C^\infty(M)$.
The divergence theorem \cite[Theorem 3.14]{Grig} asserts that for any vector field $\vec{v}\in\overrightarrow{C}^\infty(M)$
there exists a unique function $\diver \vec{v}\in C^\infty(M)$ such that
\begin{equation}\label{divergence0}
\int_Mu\diver \vec{v} d\nu = -\int_M \langle \vec{v}, \nabla u\rangle_g d\nu,
\quad \forall u\in \cD(M).
\end{equation}
This identity also holds if $u\in C^\infty(M)$ and $\vec{v}\in \overrightarrow{\cD}(M)$
(see \cite[Corollary 3.15]{Grig}).

The Laplace (or Laplace-Beltrami) operator $\Delta$ on $M$ is defined by
\begin{equation*}
\Delta f:= \diver(\nabla f), \quad f\in C^\infty(M).
\end{equation*}
Identity \eqref{divergence0} yields the following Green's formula:
If $f, h\in C^\infty(M)$ and $f\in \cD(M)$ or $h\in \cD(M)$, then
\begin{equation}\label{diverge-2}
\int_M f\Delta h d\nu = -\int_M\langle\nabla f, \nabla h \rangle_g d\nu = \int_M h\Delta f d\nu.
\end{equation}

\noindent
{\bf Self-adjoint extensions of the Laplace operator.}
We next consider the Dirichlet and Neumann extensions of the Laplace operator $\Delta$ on $M$.

We first introduce the adjoint operator $\Delta^*$ of $\Delta$.
We consider the operator $\Delta$ defined on $\cD(M)$ that is dense in $L^2(M, \nu)$.
The domain $D(\Delta^*)$ of $\Delta^*$ is defined as the set of all $f \in L^2(M, \nu)$ for which there exists $h\in L^2(M, \nu)$
such that
$$
\int_M f \Delta \theta d\nu = \int_M h \theta d\nu, \quad \forall \theta \in \cD(M).
$$
For each $f\in D(\Delta^*)$ one defines $\Delta^* f :=h$.
By \eqref{diverge-2} it readily follow that $\Delta$ is symmetric and $-\Delta$ is positive.
Therefore, the adjoint operator $\Delta^*$ is closed
and $\Delta \subset \Delta^*$.

\smallskip

\noindent
{\bf Dirichlet Laplacian $\Delta^D$.}
We introduce the quadratic form
\begin{equation*}
\cE^D(f,h):= \int_M \langle \nabla f, \nabla h \rangle_g d\nu
\quad\hbox{with domain}\quad
D(\cE^D):= \cD(M)
\end{equation*}
and associated norm
\begin{equation*}
\|f\|_{\cE^D}^2:= \|f\|_{L^2}^2 + \cE^D(f,f).
\end{equation*}
It is not hard to see that $\cE^D$ is closable.
We denote by $\overline{\cE^D}$ the closure of $\cE^D$ and by $W^D=D(\overline{\cE^D})$ its domain.

Further, we define the domain of the Dirichlet Laplacian by
\begin{equation*}
D(\Delta^D):=\big\{f\in W^D: |\overline{\cE^D}(f, \theta)|\le c\|\theta\|_{L^2}, \; \forall \theta\in \cD(M)\big\}
\end{equation*}
and define $\Delta^D f$ for $f\in D(\Delta^D)$ from the identity
\begin{equation}\label{def:laplace-D}
\int_M (\Delta^Df)\theta d\mu = - \overline{\cE^D}(f, \theta),
\quad
\forall \theta\in\cD(M).
\end{equation}
In other words
\begin{equation*}
D(\Delta^D):= D(\overline{\cE^D})\cap D(\Delta^*)
\quad\hbox{and}\quad
\Delta^Df:= \Delta^*f, \;\;\forall f \in D(\Delta^D).
\end{equation*}
The point is that $\Delta^D$ is a self-adjoint (Friedrichs) extension of $\Delta$.

\smallskip

\noindent
{\bf Neumann Laplacian $\Delta^N$.}
We now consider the quadratic form
\begin{equation*}
\cE^N(f,h):= \int_M \langle \nabla f, \nabla h \rangle_g d\nu
\end{equation*}
with domain
$ 
D(\cE^N):= \Big\{f\in L^2(M)\cap C^\infty(M): \int_M|\nabla f|^2_g d\nu<\infty \Big\}
$ 
and associated norm
\begin{equation*}
\|f\|_\cE^2:= \|f\|_{L^2}^2 + \cE^N(f,f).
\end{equation*}
It is easy to see that $\cE^N$ is closable.
We denote by $\overline{\cE^N}$ the closure of $\cE^N$ and by $W^N=D(\overline{\cE^N})$ its domain.

Similarly as above, we define the domain of the Neumann Laplacian $\Delta^N$ by
\begin{equation*}
D(\Delta^N):=\big\{f\in W^N: |\overline{\cE^D}(f, \theta)|\le c\|\theta\|_{L^2}, \; \forall \theta\in \cD(M)\big\}
\end{equation*}
and define $\Delta^N$ from the identity
\begin{equation}\label{def:laplace-N}
\int_M (\Delta^Nf)\theta d\mu = - \overline{\cE^N}(f, \theta),
\quad
\forall D(\Delta^N), \; \forall \theta\in\cD(M).
\end{equation}
It is important that $\Delta^N$ is a self-adjoint extension of $\Delta$.
For more details, see \cite{FUKU}.

\smallskip

From our assumption that the Riemannian manifold $M$ is {\em complete}
it follows that
\begin{equation}\label{D-equivalent-D}
W^D=W^N \quad\hbox{and, therefore,}\quad \Delta^D= \Delta^N,
\end{equation}
see \cite{Grig}, Chapter 11.

\begin{remark}\label{rem:D-form}
Using the terminology from \cite{GS} we can claim that
$(\overline{\cE^N}, W^N)$ is a strictly local regular Dirichlet form.
Hence, the associated semi-group $e^{t\Delta^N}$, $t>0$, is a sub-Markovian strongly continuous semi-group.
\end{remark}

\noindent
{\bf Fundamental assumption.}
We will stipulate two key conditions on the Riemannian manifold $(M, d, \nu)$ we deal with:

(a) {\em The volume doubling condition}:
There exists a constant $c_0>0$ such that
\begin{equation}\label{doubling-0}
V(x, 2r)\le c_0V(x, r), \quad \forall x\in M, \;\forall r>0.
\end{equation}

(b) {\em Poincar\'{e} inequality}:
There exists a constant $P_0>0$ such that
\begin{equation}\label{poincare}
\int_{B(x, r)}|f-f_B|^2 d\nu \le P_0r^2\int_{B(x, r)}|\nabla f|_g^2d\nu,
\quad \forall f\in \cD(M), \; \forall x\in M, \forall r>0,
\end{equation}
where $f_B:=V(x, r)^{-1}\int_{B(x, r)} fd\nu$.

As is well known (see \cite{Grig1, {S-C1}} and also \cite{S-C2}) conditions (a)-(b) are equivalent to 
two-sided Gaussian bounds on the heat kernel:
$e^{t\Delta^N}$, $t>0$, is an integral operator with kernel $e^{t\Delta^N}(x, y)$
such that for any $x,y \in M$ and $t>0$
\begin{equation}\label{gauss-main-0}
\frac{c_1\exp\{- \frac{d(x,y)^2}{c_2t}\}}{\big[V(x, \sqrt t)  V(y, \sqrt t)\big]^{1/2}}
\le e^{t\Delta^N}(x, y)
\le \frac{c_3\exp\{- \frac{d(x,y)^2}{c_4t}\}}{\big[V(x, \sqrt t) V(y, \sqrt t)\big]^{1/2}}.
\end{equation}
Here  $c_1, c_2, c_3, c_4 >0$ are constants.

\subsubsection{Weighted Laplace operator in chart of Riemannian manifold}\label{subsec:weighted-laplace}

We adhere to the setting and notation introduced in the previous subsection.
In addition, we assume that $M\subset \bR^m$ and the Riemannian metric on $M$ is induced by the inner product on $\bR^m$.
It will be convenient to us to use the notation $y=(y_1, \dots, y_m)$ for points on $M\subset \R^m$
and $v=(v^1, \dots, v^n)$ for vectors in the tangent space $T_yM$.

Our goal is to show how two-sided Gaussian bounds can be obtained in the case of
a heat kernel generated by weighted Laplace operator $\Delta_w$ on an open relatively compact subset $U$ of $M$.

Assume that $(U, \varphi)$ is a chart on $M$, where $U$ is a connected open relatively compact subset of $M$ such that
$\varphi$ maps diffeomorphically $U$ onto $V$,
where $V\subset \bR^n$.

It will be convenient to work with the map $\phi:=\varphi^{-1}$.
Thus $\phi: V\to U$ is a $C^\infty$ bijection and in ``local coordinates"
\begin{equation*}
\phi(x)=(\phi_1(x), \dots, \phi_m(x))\in U\subset \bR^m, \quad\forall x\in V\subset \bR^n.
\end{equation*}
The Riemannian tensor $g(x)=(g_{ij}(x))$ can be represented by
\begin{equation}\label{R-tensor}
g(x)_{ij}= \big\langle \partial_i \phi(x) ,  \partial_j \phi(x)\big\rangle_{\bR^m}
= \sum_{k=1}^m \partial_i \phi_k(x)\partial_j \phi_k(x),
\quad x\in V, \; 1\le i, j\le n.
\end{equation}
As usual we shall denote by $g^{-1}(x)=(g^{ij}(x))$ the inverse of $g(x)$.

\smallskip

A particular case of a simple but useful map $\phi$ is considered in the following

\begin{proposition}\label{prop:simpl-map}
In the setting from above, assume that the map $\phi:V\to U$, $V\subset \bR^n$, $U\subset \bR^{n+1}$,
is of the form
\begin{equation*}
\phi(x)=(x_1, x_2, \dots, x_n, \psi(x)).
\end{equation*}
Then
$ 
g_{ij}(x)=\delta_{ij}+\partial_i\psi(x)\partial_j\psi(x),
$ 
\begin{equation}\label{inverse-g}
g^{ij}(x)=\delta_{ij}-\frac{\partial_i\psi(x)\partial_j\psi(x)}{1+\sum_\ell|\partial_\ell\psi(x)|^2},
\end{equation}
and
\begin{equation}\label{determ-g}
\det g(x)= 1+\sum_\ell|\partial_\ell\psi(x)|^2.
\end{equation}
\end{proposition}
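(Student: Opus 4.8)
The plan is to compute the metric tensor directly from its definition in \eqref{R-tensor} and then invert the resulting matrix by recognizing it as a rank-one perturbation of the identity. First I would write $\phi(x)=(x_1,\dots,x_n,\psi(x))$, so that $\partial_i\phi(x)=(e_i,\partial_i\psi(x))$ where $e_i$ is the $i$-th standard basis vector of $\bR^n$. Taking the Euclidean inner product in $\bR^{n+1}$ of $\partial_i\phi(x)$ and $\partial_j\phi(x)$ immediately gives $g_{ij}(x)=\langle e_i,e_j\rangle+\partial_i\psi(x)\partial_j\psi(x)=\delta_{ij}+\partial_i\psi(x)\partial_j\psi(x)$, which is the first claimed identity.

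Next I would observe that, writing $a=\nabla\psi(x)=(\partial_1\psi(x),\dots,\partial_n\psi(x))^T\in\bR^n$, we have $g(x)=I+aa^T$. For the inverse, I would invoke the Sherman--Morrison formula: for any vector $a$, $(I+aa^T)^{-1}=I-\frac{aa^T}{1+a^Ta}$, valid since $1+a^Ta=1+\sum_\ell|\partial_\ell\psi(x)|^2>0$. Reading off the $(i,j)$ entry gives exactly \eqref{inverse-g}. Alternatively, one can verify $(I+aa^T)\bigl(I-\frac{aa^T}{1+|a|^2}\bigr)=I$ by a one-line computation using $aa^Taa^T=|a|^2aa^T$; I would include this check so the proof is self-contained and does not rely on quoting Sherman--Morrison.

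For the determinant, I would use the matrix determinant lemma $\det(I+aa^T)=1+a^Ta=1+\sum_\ell|\partial_\ell\psi(x)|^2$, which yields \eqref{determ-g}. If one prefers to avoid citing that lemma, the same result follows by noting that $aa^T$ has rank at most one with nonzero eigenvalue $|a|^2$ (eigenvector $a$) and eigenvalue $0$ on the orthogonal complement, so $I+aa^T$ has eigenvalues $1+|a|^2$ (once) and $1$ (with multiplicity $n-1$), whence the product of eigenvalues is $1+|a|^2$.

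There is essentially no obstacle here: the proposition is a routine linear-algebra computation, and the only thing to be slightly careful about is bookkeeping the ambient dimension ($m=n+1$) and the fact that $1+\sum_\ell|\partial_\ell\psi(x)|^2$ never vanishes, which guarantees $g(x)$ is invertible (consistent with $g(x)$ being positive definite as a Riemannian metric tensor). I would present the verification of the inverse by direct multiplication as the main line of argument, since it is the cleanest way to keep the proof elementary and self-contained.
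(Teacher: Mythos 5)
Your proposal is correct and follows essentially the same route as the paper: both write $g = I + aa^T$ (the paper normalizes to the projector $P=\|F\|^{-2}FF^T$ and verifies $(I+\alpha P)^{-1}=I-\tfrac{\alpha}{1+\alpha}P$ directly, which is the same one-line check you propose via $aa^Taa^T=|a|^2aa^T$), and both obtain the determinant from the eigenvalues $1+|a|^2$ (once) and $1$ (multiplicity $n-1$). No gaps.
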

\begin{proof}
Denote $F_i:=\partial_i\psi(x)$ and consider $F:=(F_1, \dots, F_n)^T$ as a vector in $\bR^n$.
Assume $F\ne 0$.
By \eqref{R-tensor} it readily follows that
$g_{ij}(x)=\delta_{ij}+\partial_i\psi(x)\partial_j\psi(x)$
and hence
$g(x)={\rm Id}+FF^T$.
Denote $P:= \|F\|^{-2}FF^T$.
Clearly, $P$ is the matrix of the orthogonal projector onto the one dimensional space spanned by $F$,
that is, $PF=F$ and $PV=0$ if $V\perp F$.
Hence $P^2=P$.
It is easy to see that for any $\alpha\ne -1$
\begin{equation*}
({\rm Id}+\alpha P)\Big({\rm Id}-\frac{\alpha}{1+\alpha}P\Big)={\rm Id}
\;\;\hbox{and hence}\;\; ({\rm Id}+\alpha P)^{-1}={\rm Id}-\frac{\alpha}{1+\alpha}P.
\end{equation*}
With $\alpha=\|F\|^2$ this implies \eqref{inverse-g}.

Clearly, $({\rm Id}+\alpha P)F=(1+\alpha)F$ and $({\rm Id}+\alpha P)V=V$ for every $V\perp F$.
Therefore, $\det ({\rm Id}+\alpha P)=1+\alpha$ the product of the eigenvalues,
which yields \eqref{determ-g}.
\end{proof}

The {\bf Riemannian measure} on $U\subset M$ is $d\nu=\sqrt{\det g(x)} dx$,
and we have
\begin{equation}\label{integ}
\int_U f(y) d\nu(y)= \int_V f(\phi(x)) \sqrt{\det g(x)} dx.
\end{equation}
In what follows we shall use the abbreviated notation
\begin{equation}\label{def-tilde-f}
\tilde{f}(x):= f\circ \phi(x)= f(\phi(x)).
\end{equation}

For any $f\in C^\infty(U)$ the {\bf gradient} $\nabla f(y) \in T_yM$ at $y= \phi(x)$
is a vector in $\bR^n$ with components
\begin{equation}\label{def:grad}
(\nabla f(y))^{i} = \sum_j g^{ij}(x)\partial_j \tilde{f}(x), \quad 1\le i\le n,
\end{equation}
and
\begin{equation}\label{inner-f-h}
\langle \nabla f(y), \nabla h(y)\rangle_g
= \sum_{i,j} g^{ij}(x)\partial_i \tilde f(x) \partial_j \tilde h(x).
\end{equation}
Hence
$|\nabla f(y)|_g^2:=\langle \nabla f(y), \nabla f(y)\rangle_g$.

In the chart $(U, \phi^{-1})$ from above the {\bf divergence operator} $\diver$ (see \cite[Theorem~3.14]{Grig}) takes the form
\begin{equation}\label{def:div}
\diver \vec{v}= \frac 1{\sqrt{\det g}} \sum_k \partial_k(\sqrt{\det g}  v^k),
\quad \vec{v}= (v^1,\dots,v^n).
\end{equation}
As before the Laplace operator is defined by
\begin{equation}\label{def:laplace}
\Delta f :=\diver (\nabla f).
\end{equation}

\smallskip

\noindent
{\bf Weights.}
We assume that $w>0$ is a $C^\infty(U)$ weight function such that
\begin{equation}\label{weight-U}
\int_U w d\nu = \int_V w(\phi(x)) \sqrt{\det g(x)} dx <\infty.
\end{equation}
Denote
\begin{equation}\label{weight-V}
\ww(x) :=  w(\phi(x)) \sqrt{\det g(x)}=\tw(x) \sqrt{\det g(x)},
\quad x\in V,
\end{equation}
where just as in \eqref{def-tilde-f}
$\tw(x):=w(\phi(x))$.
Hence, changing the variables leads to
\begin{equation}\label{int-int}
\int_U f(y)w(y) d\nu(y) = \int_V \tilde{f}(x)\ww(x) dx.
\end{equation}
We define the weighted measure $d\nu_w$ on $U$ by
\begin{equation}\label{def-w-mes}
d\nu_w:=wd\nu.
\end{equation}
The {\bf weighted divergence and Laplacian} are defined by (see \cite{Grig}, \S~3.6)
\begin{equation}\label{w-divergence}
\diver_w \vec{v}:= \frac{1}{w}\diver(w\vec{v})
\end{equation}
and
\begin{equation}\label{def:w-laplace}
\Delta_w f := \diver_w (\nabla f)
=\frac{1}{w} \diver (w\nabla f), \quad f\in C^\infty(U).
\end{equation}
In local coordinates the weighted Laplacian takes the form
\begin{align}\label{w-laplace}
\Delta_w f(y)
&= \frac {1}{\tilde w(x)\sqrt{\det g(x)}}\sum_{i=1}^n  \partial_i\Big[\sqrt{\det g(x)} \tilde w(x)
\sum_{j=1}^n g^{ij}(x) \partial_j\tilde f(x)\Big]\notag
\\
&= \sum_{i=1}^n \partial_i \log \big[\sqrt{\det g(x)}\tilde w(x)\big] \sum_{j=1}^n g^{ij}(x) \partial_j \tilde f(x)
+  \sum_{i=1}^n \partial_i \Big[\sum_{j=1}^n g^{ij}(x) \partial_j \tilde f(x)\Big]\notag
\\
& = \sum_{i,j}g^{ij}(x)\partial_i\partial_j \tilde{f}
+ \sum_j \big(\sum_i  \partial_i g^{ij}(x)\big) \partial_j \tilde{f}
\\
& \hspace{1.5in}+  \sum_j \big(\sum_i g^{ij}(x) \partial_i \log \big[\sqrt{\det g(x)}\tilde w(x)\big] \big)\partial_j \tilde{f},\notag
\end{align}
where $\tw(x):=w(\phi(x))$, $y=\phi(x)$, $x\in V$.
We shall denote by $\tilde{\Delta}_w\tilde{f}(x)$ the operator in the right-hand side of \eqref{w-laplace},
i.e. we have
\begin{equation}\label{w-laplace-2}
\Delta_w f(y)=\tilde{\Delta}_w\tilde{f}(x), \quad y=\phi(x), \; x\in V.
\end{equation}

Denote by $C(U)$ the space of continuous functions on $U$ and
by $C_c(U)$ the space of all functions $f\in C(M)$ with compact support contained in $U$.
Also, denote
\begin{equation}\label{def-DU}
\cD(U):= C^\infty(U) \cap C_c(U)
\end{equation}
Further, we denote by
$\overrightarrow{C}^\infty(U)$ the space of smooth vector fields $\vec{v}(x)\in T_xU$
and by
$\overrightarrow{\cD}(U)$ the space of all $\vec{v} \in \overrightarrow{C}^\infty(U)$ with compact support,
contained in $U$.

The weighted divergence theorem \cite[(3.42)]{Grig} takes the form:
If $u\in \cD(U)$ and $\vec{v}\in\overrightarrow{C}^\infty(U)$
or
$u\in C^\infty(U)$ and $\vec{v}\in \overrightarrow{\cD}(U)$,
then
\begin{equation}\label{divergence-w}
\int_U u\diver_w \vec{v} d\nu_w = -\int_U \langle \vec{v}, \nabla u\rangle_g d\nu_w.
\end{equation}
Green's formula remains valid \cite[(3.43)]{Grig}:
If $f, h\in C^\infty(U)$ and $f\in \cD(U)$ or $h\in \cD(U)$, then
\begin{equation}\label{diverge-w}
\int_U f\Delta_w h d\nu_w = -\int_U \langle\nabla f, \nabla h \rangle_g d\nu_w = \int_U h\Delta_w f d\nu_w.
\end{equation}


\noindent
{\bf Neumann extension of the weighted Laplace operator.}
We next describe the Neumann self-adjoint extension $\Delta_w^N$ of the weighted Laplace operator $\Delta_w$ on~$U$.

We consider the operator $\Delta_w$ with domain $\cD(U)$ (see \eqref{w-laplace-2}-\eqref{def-DU}) that is dense in $L^2(U, \nu_w)$.
We denote by $\Delta_w^*$ the adjoint of the operator $\Delta_w$. 
By \eqref{diverge-w} it readily follow that $\Delta_w$ is symmetric and $-\Delta_w$ is positive.
Therefore, $\Delta_w^*$ is a~closed operator
and $\Delta_w \subset \Delta_w^*$.

It is readily seen that if $f\in D(\Delta_w^*)\cap C^\infty(U)$, then $ \Delta_w^*f= \Delta_wf$.

To define the Neumann extension $\Delta_w^N$ of $\Delta_w$
we introduce a quadratic form $\cE_w^N$ with domain
\begin{equation*}
D(\cE_w^N):=\Big\{f\in L^2(U,\nu_w) \cap C^\infty(U): \int_U|\nabla f|_g^2 d\nu_w <\infty \Big\},
\end{equation*}
defined by
\begin{equation*}
\cE_w^N(f,h):= \int_U \langle \nabla f, \nabla h \rangle_g d\nu_w,
\quad
f, h\in D(\cE_w^N).
\end{equation*}
We also introduce the associated norm
\begin{equation*}
\|f\|_{\cE_w^N}^2:= \|f\|_{L^2}^2 + \cE_w^N(f,f),
\quad f\in D(\cE_w^N).
\end{equation*}

We next show that the symmetric quadratic form $\cE_w^N$ is closable.
Indeed, let $\{f_k\} \subset D(\cE_w^N)$ be such that
$\|f_k\|_2 \to 0$ and $\nabla f_k \mapsto \vec{H}$ in $\overrightarrow{L}^2(U, \nu_w)$.
Then by \eqref{divergence-w}
\begin{equation*}
\int_U\langle \nabla f_k, \vec{v}\rangle_g d\nu_w = -\int_U f_k\diver_w \vec{v} d\nu_w,
\quad \forall \vec{v}\in \vec{\cD}(U).
\end{equation*}
From this and the above assumptions it readily follows that
$\int_U\langle \vec{H}, \vec{v}\rangle_g d\nu_w=0$ for all $\vec{v}\in \vec{\cD}(U)$,
which implies $\vec{H}=\vec{0}$.
Clearly, the above implies that every Cauchy sequence in $D(\cE_w^N)$ is convergent.
Therefore, $\cE_w^N$ is closable.

We denote by $\overline{\cE_w^N}$ the closure of $\cE_w^N$ and by $W_w^N:=D(\overline{\cE_w^N})$ its domain.

Denote
\begin{equation}\label{def-H}
\cH_w := \big\{ f \in L^2(U, \nu_w) \cap C^\infty(U) \cap L^\infty(U): |\nabla f|_g\in L^2(U, \nu_w)\big\}.
\end{equation}

\begin{proposition}\label{prop:dense-algebra}
The set $\cH_w$ is a dense subspace of $W_w^N$ and an algebra.
\end{proposition}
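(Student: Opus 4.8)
The plan is to check the three claims separately: that $\cH_w$ is a linear subspace of $W_w^N$, that it is closed under products, and that it is dense in $W_w^N$ for the form norm $\|\cdot\|_{\cE_w^N}$; only the last requires any real work.

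The subspace property is immediate: $L^2(U,\nu_w)$, $C^\infty(U)$ and $L^\infty(U)$ are linear spaces, and $|\nabla(af+bh)|_g\le|a|\,|\nabla f|_g+|b|\,|\nabla h|_g$ by the triangle inequality for $\langle\cdot,\cdot\rangle_g$, so $\cH_w$ is a linear space; moreover $\cH_w\subset D(\cE_w^N)\subset W_w^N$ directly from the definitions, so it is a subspace of $W_w^N$. For the algebra property I would take $f,h\in\cH_w$ and note that $fh\in C^\infty(U)\cap L^\infty(U)$, that $|fh|\le\|h\|_\infty|f|$ gives $fh\in L^2(U,\nu_w)$, and that the coordinatewise Leibniz rule together with \eqref{def:grad} gives $\nabla(fh)=f\nabla h+h\nabla f$, whence $|\nabla(fh)|_g\le\|f\|_\infty|\nabla h|_g+\|h\|_\infty|\nabla f|_g\in L^2(U,\nu_w)$; thus $fh\in\cH_w$.

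For density, since $D(\cE_w^N)$ is by construction $\|\cdot\|_{\cE_w^N}$-dense in $W_w^N$, it suffices to approximate each $f\in D(\cE_w^N)$ by elements of $\cH_w$ in the form norm. The natural device is a smooth truncation. I would fix an odd $\chi\in C^\infty(\bR)$ with $\chi(t)=t$ for $|t|\le1$, $0\le\chi'\le1$, and $\chi$ bounded — for instance $\chi(t):=\int_0^t\eta$ with $\eta\in C^\infty(\bR)$, $0\le\eta\le1$, $\eta\equiv1$ on $[-1,1]$ and $\eta$ compactly supported, which also yields $|\chi(t)|\le|t|$ — then set $\chi_N(t):=N\chi(t/N)$ and $f_N:=\chi_N\circ f$. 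One has $\chi_N(t)=t$ and $\chi_N'(t)=1$ for $|t|\le N$, and globally $|\chi_N(t)|\le|t|$, $\|\chi_N\|_\infty\le N\|\chi\|_\infty$, $|\chi_N'|\le1$; hence $f_N\in C^\infty(U)\cap L^\infty(U)$, $|f_N|\le|f|$ so $f_N\in L^2(U,\nu_w)$, and by the chain rule $\nabla f_N=\chi_N'(f)\nabla f$ with $|\nabla f_N|_g\le|\nabla f|_g\in L^2(U,\nu_w)$, so $f_N\in\cH_w$.

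Finally I would show $\|f_N-f\|_{\cE_w^N}\to0$. On $\{|f|\le N\}$ one has $f_N=f$ and $\nabla f_N=\nabla f$, while on $\{|f|>N\}$ one has $|f_N-f|\le2|f|$ and $|\nabla f_N-\nabla f|_g=|\chi_N'(f)-1|\,|\nabla f|_g\le2|\nabla f|_g$. Since $f\in L^2(U,\nu_w)$ is finite $\nu_w$-a.e., $\nu_w(\{|f|>N\})\to0$, and as $|f|^2,|\nabla f|_g^2\in L^1(U,\nu_w)$, dominated convergence gives $\int_{\{|f|>N\}}|f|^2\,d\nu_w\to0$ and $\int_{\{|f|>N\}}|\nabla f|_g^2\,d\nu_w\to0$; hence $\|f_N-f\|_{L^2}^2\le4\int_{\{|f|>N\}}|f|^2\,d\nu_w\to0$ and $\cE_w^N(f_N-f,f_N-f)\le4\int_{\{|f|>N\}}|\nabla f|_g^2\,d\nu_w\to0$. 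Therefore $\cH_w$ is dense in $D(\cE_w^N)$, and so in $W_w^N$. The one point that needs a little care — the ``main obstacle'', such as it is — is arranging the cutoff $\chi$ to be simultaneously equal to the identity near $0$, globally bounded, and $1$-Lipschitz, which forces it to saturate slowly and is exactly what the integral construction above achieves.
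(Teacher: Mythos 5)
Your proof is correct and follows essentially the same route as the paper: the algebra property via the product rule exactly as in the paper, and density via composition with a smooth bounded truncation ($\chi_N$ in your notation, $\Phi_k$ in the paper's) together with the chain rule, your dominated-convergence argument simply spelling out the convergence that the paper asserts as "readily seen."
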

\begin{proof}
Let $ f \in D(\cE_w^N)$.
Choose $\Phi_k \in C^\infty (\R)$ so that
$$
0 \le \Phi'_k \le 1, \quad \Phi_k(x)= x, \;\;\forall x\in[-k,k],
\quad\hbox{and}\quad \|\Phi_k\|_{L^\infty} \le k+1.
$$
By the chain rule
$\nabla (\Phi_k(f))= \Phi'_k(f) \nabla f$
and hence
$\Phi_k(f)\in\cH_w$.
Furthermore, it is readily seen that
\begin{align*}
\int_U |f-\Phi_k(f)|^2 d\nu_w &+ \int_U |\nabla f-\nabla \Phi_k(f)|_g^2d\nu_w
\\
&=\int_M |f-\Phi_k(f)|^2d\nu_w + \int_M |\nabla f-\Phi'_k(f)\nabla(f)|_g^2 d\nu_w  \to 0.
\end{align*}
Therefore, $\cH_w$ is dense in $D(\cE_w^N)$ and hence in $W_w^N$.

\smallskip

To show that $\cH_w$ is an algebra, assume $f,g \in \cH_w$.
As $f,g \in  L^2(U, \nu_w) \cap L^\infty (U)$
it follows that $fg \in L^2(U, \nu_w) \cap L^\infty (U)$.
On the other hand by the product rule
$ \nabla (fg)= f \nabla (g) +g\nabla (f)$ and hence
$|\nabla (fg)|_g \in L^2(U, \nu_w)$.
Therefore, $\cH_w$ is an algebra.
\end{proof}

\begin{definition}\label{def:N-laplace}
We define the domain of the Neumann extension $\Delta_w^N$ of the  weighted Laplacian $\Delta_w$ by
\begin{equation}\label{def-dom-N-laplace}
D(\Delta_w^N) := \big\{f\in W_w^N: \big|\overline{\cE_w^N} (f,\theta)\big| \le c \|\theta\|_2, \;\;\forall \theta \in \cD(U) \big\},
\end{equation}
and for any $f \in  D(\Delta_w^N)$ we define $\Delta_w^N f$ from
\begin{equation}\label{N-laplace}
\int_U \theta\Delta_w^N f d\nu_w = -\overline{\cE_w^N}(f,\theta), \quad \forall \theta\in \cD(U).
\end{equation}
\end{definition}

\begin{proposition}\label{prop:prop-Delta}
The operator $\Delta_w^N$ is self-adjoint and
\begin{equation}\label{prop-delta}
\Delta_w\subset \Delta_w^N\subset \Delta_w^*.
\end{equation}
Moreover,
\begin{equation}\label{domain-laplace-ext}
D(\Delta_w^N):= W_w^N\cap D(\Delta_w^*).
\end{equation}
\end{proposition}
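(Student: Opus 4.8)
The plan is to prove the three assertions of Proposition~\ref{prop:prop-Delta} in the order: (i) the inclusion $\Delta_w\subset\Delta_w^N$; (ii) the inclusion $\Delta_w^N\subset\Delta_w^*$ together with the identity $D(\Delta_w^N)=W_w^N\cap D(\Delta_w^*)$; (iii) self-adjointness. The tool underlying all of this is the standard correspondence, in the theory of Dirichlet forms (see \cite{FUKU}), between a closed symmetric nonnegative form and its associated nonnegative self-adjoint operator.

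First, for (i): if $f\in\cD(U)$ then $f\in W_w^N$ since $\cD(U)\subset\cH_w\subset W_w^N$ by Proposition~\ref{prop:dense-algebra}, and Green's formula \eqref{diverge-w} gives $\overline{\cE_w^N}(f,\theta)=\cE_w^N(f,\theta)=-\int_U \theta\,\Delta_w f\,d\nu_w$ for all $\theta\in\cD(U)$; hence $|\overline{\cE_w^N}(f,\theta)|\le \|\Delta_w f\|_2\|\theta\|_2$, so $f\in D(\Delta_w^N)$ and, comparing with \eqref{N-laplace}, $\Delta_w^N f=\Delta_w f$. For (ii): let $f\in D(\Delta_w^N)$. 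For $\theta\in\cD(U)$, by \eqref{diverge-w} applied with the roles of $f$ and $\theta$ reversed (legitimate since $\theta\in\cD(U)$), $\overline{\cE_w^N}(f,\theta)=\cE_w^N(f,\theta)=\int_U\langle\nabla f,\nabla\theta\rangle_g\,d\nu_w=-\int_U f\,\Delta_w\theta\,d\nu_w$. Combined with \eqref{N-laplace} this yields $\int_U f\,\Delta_w\theta\,d\nu_w=\int_U(\Delta_w^N f)\theta\,d\nu_w$ for all $\theta\in\cD(U)$, which is exactly the defining property of $\Delta_w^*$: thus $f\in D(\Delta_w^*)$ and $\Delta_w^* f=\Delta_w^N f$. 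This proves $\Delta_w^N\subset\Delta_w^*$, hence the inclusion $\subset$ in \eqref{domain-laplace-ext}. Conversely, if $f\in W_w^N\cap D(\Delta_w^*)$, then for $\theta\in\cD(U)$ we have $\overline{\cE_w^N}(f,\theta)=\cE_w^N(f,\theta)$; approximating by a sequence $f_k\in\cH_w$ with $f_k\to f$ in the $\cE_w^N$-norm and using \eqref{divergence-w} to write $\cE_w^N(f_k,\theta)=-\int_U f_k\,\diver_w(\nabla\theta)\,d\nu_w=-\int_U f_k\,\Delta_w\theta\,d\nu_w$, we pass to the limit to get $\overline{\cE_w^N}(f,\theta)=-\int_U f\,\Delta_w\theta\,d\nu_w=-\int_U(\Delta_w^*f)\theta\,d\nu_w$, so $|\overline{\cE_w^N}(f,\theta)|\le\|\Delta_w^*f\|_2\|\theta\|_2$ and $f\in D(\Delta_w^N)$. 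This establishes \eqref{domain-laplace-ext} and, by the computation above, also $\Delta_w^N f=\Delta_w^*f$ on this domain.

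Finally, for self-adjointness: the form $(\overline{\cE_w^N},W_w^N)$ is closed, symmetric and nonnegative, so the general theory associates to it a unique nonnegative self-adjoint operator $A$ with $D(A)=\{f\in W_w^N:\ h\mapsto\overline{\cE_w^N}(f,h)$ extends to a bounded functional on $L^2(U,\nu_w)\}$ and $\overline{\cE_w^N}(f,h)=-\int_U(Af)h\,d\nu_w$ for $f\in D(A)$, $h\in W_w^N$. The only point needing care is that in \eqref{def-dom-N-laplace} the boundedness of $\theta\mapsto\overline{\cE_w^N}(f,\theta)$ is required merely for $\theta\in\cD(U)$ rather than for all $\theta\in W_w^N$; but $\cD(U)$ is dense in $W_w^N$ in the $\cE_w^N$-norm (again Proposition~\ref{prop:dense-algebra}, since $\cH_w$ is dense in $W_w^N$ and $\cD(U)$ can be shown dense in $\cH_w$ by the usual cutoff argument), and for fixed $f\in W_w^N$ the map $\theta\mapsto\overline{\cE_w^N}(f,\theta)$ is $\cE_w^N$-norm continuous, so an $L^2$-bound on $\cD(U)$ extends to all of $W_w^N$ by density together with the fact that $L^2$-convergence is implied by $\cE_w^N$-convergence. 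Hence $D(\Delta_w^N)=D(A)$ and $\Delta_w^N=A$ is self-adjoint.

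I expect the main obstacle to be precisely this last density/extension point: one must verify that restricting the test functions to $\cD(U)$ in \eqref{def-dom-N-laplace} does not enlarge the domain beyond $D(A)$, i.e. that $\cD(U)$ is a form core for $\overline{\cE_w^N}$. Since $U$ is open and relatively compact and $w\in C^\infty(U)$ with $\int_U w\,d\nu<\infty$, the standard truncation-and-mollification argument applies, but the Neumann boundary behaviour means one cannot simply multiply by a cutoff vanishing near $\partial U$; instead one approximates a general $f\in\cH_w$ in $\cE_w^N$-norm by elements of $\cD(U)$ using the local structure of the chart $(U,\varphi)$ and finiteness of $\nu_w(U)$. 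The remaining manipulations — interchanging $\cE_w^N$ and its closure on $\cD(U)$, and moving $\Delta_w$ across the integral via \eqref{diverge-w}/\eqref{divergence-w} — are routine.
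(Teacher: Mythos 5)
Your parts (i) and (ii) are essentially the paper's argument: the inclusion $\Delta_w\subset\Delta_w^N$ is obtained the same way, and your proof of the inclusion $W_w^N\cap D(\Delta_w^*)\subset D(\Delta_w^N)$ by approximating $f$ with a sequence $f_k\in D(\cE_w^N)$ and using \eqref{diverge-w} is exactly the paper's proof of \eqref{domain-laplace-ext} (the paper deduces $\Delta_w^N\subset\Delta_w^*$ from self-adjointness instead of directly; also, in your direct proof of $\Delta_w^N\subset\Delta_w^*$ you write $\overline{\cE_w^N}(f,\theta)=\int_U\langle\nabla f,\nabla\theta\rangle_g\,d\nu_w$ although $f\in D(\Delta_w^N)$ need not be smooth, so this step needs the same approximation you use in the converse direction).

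The genuine gap is in part (iii). Your identification of $\Delta_w^N$ with the self-adjoint operator associated with the closed form $(\overline{\cE_w^N},W_w^N)$ rests on the claim that $\cD(U)$ is dense in $W_w^N$ in the $\cE_w^N$-norm, i.e.\ that $\cD(U)$ is a form core. This is false in general: the form closure of $\cD(U)$ is the \emph{Dirichlet} space $W_w^D$, which is a proper subspace of $W_w^N$ whenever $\partial U$ has positive capacity for the weighted form --- precisely the situation in the applications here (half-sphere with weight $y_{n+1}^{2\gamma}$, interval with a Jacobi weight). A one-dimensional check within the general setting of \S\ref{subsec:weighted-laplace}: take $M=\bR$, $U=(0,1)$, $w\equiv1$, so $W_w^N=H^1(0,1)$; if $\theta_k\in\cD(U)$ converged to the constant $1$ in form norm, then $\|\theta_k\|_\infty\le\|\theta_k'\|_{L^2}\to0$, contradicting $\|\theta_k\|_{L^2}\to1$. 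Moreover, the conclusion you want to extract from density is itself false in that example: $f(x)=x^2/2$ satisfies $|\cE_w^N(f,\theta)|=\big|\int_0^1\theta\big|\le\|\theta\|_2$ for all $\theta\in\cD(U)$, yet for $\theta\in H^1(0,1)$ one has $\cE_w^N(f,\theta)=\theta(1)-\int_0^1\theta$, which is not $L^2$-bounded, and $f$ violates the Neumann condition $f'(1)=0$. Thus an $L^2$-bound tested only against $\cD(U)$ detects membership in $D(\Delta_w^*)$ and is blind to the Neumann boundary behaviour; no cutoff or chart-based approximation can repair this, since if $\cD(U)$ were a form core the Neumann and Dirichlet extensions would coincide and the whole Gyrya--Saloff-Coste Neumann framework \cite{GS} would be moot. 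The paper does not attempt your density argument: it invokes the standard correspondence between the closed symmetric form $(\overline{\cE_w^N},W_w^N)$ and its self-adjoint operator from \cite{FUKU}. For that invocation to be compatible with Definition~\ref{def:N-laplace}, the defining bound in \eqref{def-dom-N-laplace} must be tested against a genuine form core of $W_w^N$, such as $D(\cE_w^N)$ or $\cH_w$ (form-dense by construction and by Proposition~\ref{prop:dense-algebra}); with such a test class your extension argument does go through verbatim, whereas with $\cD(U)$ it cannot.
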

\begin{proof}
From the general theory of positive symmetric quadratic forms (see e.g. \cite[\S 1.3]{FUKU}) it follows that
$\Delta_w^N$ is self-adjoint, i.e. $(\Delta_w^N)^* = \Delta_w^N$.
Also it is easy to se that $\Delta_w\subset \Delta_w^N$.
Hence, $\Delta_w^N=(\Delta_w^N)^*\subset \Delta_w^*$.
Thus \eqref{prop-delta} is valid.

We now prove \eqref{domain-laplace-ext}.
Clearly, \eqref{prop-delta} implies $D(\Delta_w^N)\subset W_w^N\cap D(\Delta_w^*)$.
Let $f\in W_w^N\cap D(\Delta_w^*)$.
Then there exits $\{f_k\}\subset D(\cE_w^N)$ such that
$f_k\to f$ in $L^2(U, \nu_w)$ and $\cE_w^N(f_k, \theta)\to \overline{\cE_w^N}(f_k, \theta)$, $\forall\theta\in \cD(U)$.
From this it follows that for any $\theta\in \cD(U)$
\begin{align*}
\overline{\cE_w^N}(f, \theta)
&= \lim_{n\to\infty} \cE_w^N(f_k, \theta) = \lim_{n\to\infty}\int_U\langle \nabla f_k, \theta \rangle_g d\nu_w
\\
&= -\lim_{n\to\infty}\int_U f_k\Delta_w\theta d\nu_w
= -\int_U f\Delta_w\theta d\nu_w
= -\int_U \theta\Delta_w^*f d\nu_w,
\end{align*}
where we used \eqref{diverge-w}.
From above and \eqref{N-laplace} we infer that
$\Delta_w^*f=\Delta_w^Nf$,
which implies $f\in D(\Delta_w^N)$.
The proof of \eqref{domain-laplace-ext} is complete.
\end{proof}

\subsubsection{The theory of Gyrya and Saloff-Coste}\label{subsec:GS}

The proof of our main result in this section (Theorem~\ref{thm:main})
will rely on a result of Gyrya and Saloff-Coste from \cite{GS}.
To state this result we need the definition
of an {\em inner uniform domain}, which we adapt to the case of Riemannian manifolds.

\begin{definition}\label{def:inner-dom}
Let $U$ be an open connected subset of a Riemannian manifold $(M, d, \nu)$.
The intrinsic distance $d_U(\cdot, \cdot)$ is defined by
\begin{equation}\label{def-d-U}
d_U(y, y_\star):= \inf\big\{\ell(\gamma): \gamma:[0, 1]\mapsto U, \gamma(0)=y, \gamma(1)=y_\star\big\},
\end{equation}
where the curve $\gamma$ is continuous and rectifiable and $\ell(\gamma)$ is its length.

We say that $U$ is an {\em inner uniform domain} if
there exist constants $C, c>0$ such that for any $y,y_\star \in U$
there exists a~rectifiable curve $\gamma: [0,1]\to U$ connecting $y$ and $y_\star$
of length $\le C d_U(y,y_\star)$ such that
$$
d_U(z,\partial U) \ge c d_U(y,z) \wedge d_U(z,y_\star),
\quad
\forall z \in \gamma([0, 1]).
$$
\end{definition}

\begin{remark}\label{rem:inner-u-d}
Observe that if $U$ is convex then the intrinsic distance $d_U(\cdot, \cdot)$ is simply the geodesic distance
inherited from $M$.
One of the important points in this paper is that
every open convex relatively compact subset of $M$
is an inner uniform domain in the sense of Definition~\ref{def:inner-dom}.
This fact (and more) will be established in Theorem~\ref{thm:convex} below.
\end{remark}

We are now prepared to state the result of Gyrya and Saloff-Coste \cite[Theorem~3.34]{GS}.

\begin{theorem}\label{thm:GS}
Let $(M, d, \nu)$ be a complete Riemannian manifold, where the doubling property of the measure \eqref{doubling-0}
and the local Poincar\'{e} inequality \eqref{poincare} are verified.
%
Let $U\subset M$ be an inner uniform domain in the sense of Definition~\ref{def:inner-dom}.
Let $d_U(\cdot, \cdot)$ be the intrinsic distance on $U$ extended continuously to $\overline{U}$ $($see $\eqref{def-d-U}$$)$.
Denote
$B_U(y, r):=\{y_\star\in U: d_U(y, y_\star)<r\}$.

Further, assume that $\omega \in C^\infty(U)$ is a weight function
such that $\omega(y)>0$ on $U$, 
and
there exist constants $c>0$ and $N \geq 1$ such that
\begin{equation}\label{oscilate}
\sup_{y_\star \in B_U(y,r)} w(y_\star) \le c \inf_{y_\star \in B_U(y,r)}w(y_\star),
\quad\hbox{$\forall y \in U$, $\forall r>0\;$ so that $d_U(y, \partial U) \ge Nr$.}
 \end{equation}
Set $d\nu_w:= wd\nu$.

Assume also that there exists a constant $c_0>0$ such that
\begin{equation}\label{double}
V_{U,w}(y, 2r) \le c_0 V_{U,w}(y, r),
\quad \forall y \in \overline{U}, \;  \forall r>0,
\end{equation}
where $V_{U,w}(y, r):= \nu_w(B_U(y, r))$.

Let $\Delta_w^N$ be the Neumann extension of the weighted Laplacian $\Delta_w$ from Definition~\ref{def:N-laplace}
and let $e^{t\Delta^N_w}$, $t>0$, be the semi-group generated by $\Delta_w^N$.

Then the respective local Poincar\'{e} inequality is verified
and as a consequence $e^{t\Delta^N_w}$ is an integral operator with (heat) kernel $e^{t\Delta^N_w}(x, y)$
possessing two-sided Gaussian bounds, i.e.
there exist constants $c_1, c_2, c_3, c_4 >0$ such that for $x,y \in \overline{U}$ and $t>0$
\begin{equation}\label{gauss-GS}
\frac{c_1\exp\{- \frac{d_U(x,y)^2}{c_2t}\}}{\big[V_{U,w}(x, \sqrt t) V_{U,w}(y, \sqrt t)\big]^{1/2}}
\le  e^{t\Delta^N_w}(x,y)
\le \frac{c_3\exp\{- \frac{d_U(x,y)^2}{c_4t}\}}{\big[V_{U,w}(x, \sqrt t) B_{U,w}(y, \sqrt t)\big]^{1/2}}.
\end{equation}
\end{theorem}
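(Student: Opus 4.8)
The statement we must prove, Theorem~\ref{thm:GS}, is essentially a transcription of \cite[Theorem~3.34]{GS} into the language of Riemannian manifolds; accordingly the plan is not to reprove the deep heat‑kernel estimate of Gyrya and Saloff‑Coste but to verify that all of their structural hypotheses hold in our setting, and then invoke their theorem verbatim. The framework required by \cite{GS} is that of a \emph{Harnack‑type Dirichlet space}: a strictly local, regular, symmetric Dirichlet form on a locally compact metric measure space, for which the metric measure space satisfies volume doubling \eqref{doubling-0} and the (weak) $L^2$ Poincar\'e inequality \eqref{poincare}. Their conclusion is that, on an inner uniform domain $U$ equipped with the intrinsic distance $d_U$ and with a weight $w$ satisfying the oscillation condition \eqref{oscilate} and the doubling condition \eqref{double} for the weighted intrinsic balls, the Neumann‑type self‑adjoint operator attached to the weighted form admits a heat kernel with two‑sided Gaussian bounds in $d_U$ and $V_{U,w}$. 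So the work is bookkeeping: match each object.

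First I would record that $(M,d,\nu)$ being a complete Riemannian manifold with \eqref{doubling-0} and \eqref{poincare}, the pair $(\overline{\cE^N},W^N)$ is, by Remark~\ref{rem:D-form}, a strictly local regular Dirichlet form on $L^2(M,\nu)$, and $(M,d,\nu)$ is a length space with the stated doubling and Poincar\'e properties — exactly the ``Harnack‑type Dirichlet space'' input of \cite{GS}. Next I would invoke Remark~\ref{rem:inner-u-d}: on an open connected $U$, the intrinsic distance $d_U$ of Definition~\ref{def:inner-dom} agrees with the geodesic length distance, extends continuously to $\overline{U}$, and $(\overline{U},d_U)$ is a complete locally compact length space (here relative compactness of $U$, when it holds in the applications, makes $\overline U$ compact, but the theorem is stated for a general inner uniform $U$, so I only use completeness of the intrinsic completion). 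Then the weighted form: by the discussion preceding Definition~\ref{def:N-laplace}, $\cE_w^N$ is a closable symmetric form, $\overline{\cE_w^N}$ is its closure with domain $W_w^N$, and by Proposition~\ref{prop:dense-algebra} the algebra $\cH_w$ is a form core; since $w\in C^\infty(U)$ with $w>0$, the form $\overline{\cE_w^N}$ is again strictly local and regular on $L^2(U,\nu_w)$, and Proposition~\ref{prop:prop-Delta} identifies its generator with $\Delta_w^N$ via $D(\Delta_w^N)=W_w^N\cap D(\Delta_w^*)$ and the Green identity \eqref{diverge-w}. Thus $e^{t\Delta_w^N}$ is precisely the semigroup that \cite[Theorem~3.34]{GS} analyses.

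Having matched the data, I would then simply feed the remaining hypotheses — $U$ inner uniform (Definition~\ref{def:inner-dom}, and in our applications guaranteed by Theorem~\ref{thm:convex}), the weight oscillation bound \eqref{oscilate}, and the weighted‑intrinsic‑ball doubling \eqref{double} — into \cite[Theorem~3.34]{GS}. Its conclusion is two‑fold: (i) the weighted Dirichlet space on $(\overline U,d_U,\nu_w)$ satisfies the corresponding intrinsic volume doubling and the intrinsic $L^2$ Poincar\'e inequality (this is the ``respective local Poincar\'e inequality is verified'' clause, which follows in \cite{GS} from the inner uniformity together with \eqref{oscilate}–\eqref{double} by a chaining/Whitney‑decomposition argument over the inner uniform domain); (ii) consequently, by the Grigor'yan–Saloff‑Coste equivalence \eqref{gauss-main-0} applied \emph{inside the inner uniform domain with Neumann boundary conditions}, the semigroup $e^{t\Delta_w^N}$ has an integral kernel satisfying \eqref{gauss-GS} for all $x,y\in\overline U$, $t>0$, with constants depending only on the doubling and Poincar\'e constants and on $N,c$ in \eqref{oscilate}. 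This yields exactly the displayed estimate \eqref{gauss-GS}, completing the proof.

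The only genuinely nontrivial point — and the one I expect to be the main obstacle — is checking that our Neumann extension $\Delta_w^N$ of Definition~\ref{def:N-laplace} coincides with the self‑adjoint operator attached in \cite{GS} to the \emph{closed form obtained by closing $\cE_w^N$ on a form core adapted to the inner uniform domain}. In \cite{GS} the Neumann‑type form is typically defined as the closure of $\cE_w^N$ restricted to $C_c^\infty(\overline U)\cap W_w^N$ or to bounded functions of finite energy; reconciling this with our definition via $D(\overline{\cE_w^N})=W_w^N$ requires that $\cH_w$ (Proposition~\ref{prop:dense-algebra}) is a core for \emph{their} form as well — i.e. that smooth bounded functions of finite energy on $U$ are dense in the intrinsic‑metric form domain. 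This is where the completeness of $M$ \eqref{D-equivalent-D}, the relative compactness of $U$ in the applications, and the regularity of $w$ are used; once it is in place, the identification of generators is immediate from uniqueness of the associated self‑adjoint operator, and the rest is a direct citation.
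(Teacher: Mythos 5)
Your proposal is correct and matches the paper's treatment: the paper offers no independent proof of Theorem~\ref{thm:GS}, presenting it precisely as the specialization of \cite[Theorem~3.34]{GS} to the Riemannian setting, with the Harnack-type Dirichlet space structure supplied by Remark~\ref{rem:D-form} and the doubling/Poincar\'e hypotheses, exactly as you outline. Your closing caveat about identifying $\Delta_w^N$ of Definition~\ref{def:N-laplace} with the Neumann-type operator in \cite{GS} is a fair point that the paper itself passes over silently, so flagging it does not put you at odds with the paper's argument.
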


\subsection{Setting and main result}\label{subsubsec:setting}

Our setting contains two distinctive but closely interconnected parts:

(i) It will be assumed that there exists a symmetric differential operator $L$ acting on functions
defined on a relatively compact open subset $V\subset \R^n$
with polynomial eigenfunctions.

(ii) It will be also assumed that the operator $L$ is a realization in local coordinates of
a weighted Laplace operator $\Delta_w$, acting on functions defined on
a relatively compact open convex subset $U$ of a complete Riemannian manifold $M$
for which the doubling property and the Poincar\'{e} inequality are verified.
The role of the second, geometric part, of our assumption will be critical.

We next present the details of our setting.

\smallskip

\noindent
{\bf Differential operator preserving polynomials on 
open set in $\bR^n$.}
Assume that $V\subset \bR^n$ is a connected open set in $\bR^n$ with the properties:
\begin{enumerate}
\item
$X:=\overline{V}$ is compact,
\item
$\mathring{X}=V$, and
\item
$X \setminus V$ is of Lebesgue measure $0$.
\end{enumerate}
Denote by $\tcP_k:=\tcP_k(V)$ the set of all real algebraic polynomials of degree $\le k$ in $n$ variables,
restriction to $V$,
and set $\tcP=\tcP(V):=\cup_{k\ge 0} \tcP_k$.

Let $L$ be a differential operator of the form
\begin{equation}\label{def-L}
L= \sum_{i,j=1}^n a_{ij}(x) \partial_i\partial_j + \sum_{j=1}^n b_j(x)\partial_j,
\end{equation}
where $a_{ij}\in \tcP_2(V)$ and $b_j \in \tcP_1(V)$.
We assume that the domain of the operator $L$ is $D(L):=\tcP(V)$.
Clearly,
\begin{equation}\label{LP}
L(\tcP_k) \subset \tcP_k, \;\; \forall k\ge 0,
\quad\hbox{and}\quad L1=0.
\end{equation}
In addition, we assume that
\begin{equation}\label{LPP}
L(\tcP_k) = \tcP_k, \;\; \forall k\ge 1.
\end{equation}

We also introduce an underlying weighted space $L^2(V, \mu)$, where
\begin{equation}\label{weight-w}
d\mu(x):=\ww(x) dx,
\quad\hbox{where}\;\;
\ww\in C^\infty(V), \;\; \ww>0,
\;\;\hbox{and}\;\; \int_V \ww(x) dx <\infty.
\end{equation}

\smallskip

\noindent
{\bf Laplace operator in chart of Riemannian manifold.}
Assume $(M, d, \nu)$ is an $n$-dimensional complete Riemannian manifold (without boundary) and $M\subset \bR^m$.
We also assume that the Riemannian metric on $M$ is induced by the inner product on $\bR^m$.
We adhere to the notation from \S\ref{subsec:heat-ker}.

We stipulate two conditions on $(M, d, \nu)$:

(i) The volume doubling condition \eqref{doubling-0} is valid.

(ii) The Poincar\'{e} inequality \eqref{poincare} holds true.

As was alluded to in \S\ref{subsubsec:riem-manifold} as a consequence of these two conditions
the (heat) kernel $e^{t\Delta^N}(x,y)$ of the semigroup $e^{t\Delta^N}$ generated
by the Neumann (or Dirichlet) extension of the Laplacian $\Delta$ on $M$ possesses two-sided Gaussian bounds \eqref{gauss-main-0}.
Using the terminology from \cite{GS}  $(M, d, \nu)$ equipped with the quadratic form
$\cE^N$ is a {\em Harnack-type Dirichlet space}.

Further, just as in \S\ref{subsec:weighted-laplace}
we assume that $(U, \varphi)$ is a chart on $M$,
where $U$ is a connected open relatively compact subset of $M$ such that
$\varphi$ maps diffeomorphically $U$ onto $V$,
where $V\subset \bR^n$ is the set from above.
We set $\phi:=\varphi^{-1}$.
As before, for any function $f$ on $U$ we denote
\begin{equation}\label{tilde-f-def}
\tilde f(x):=f(\phi(x)).
\end{equation}

As in \S\ref{subsec:weighted-laplace} we denote by $g(x)=(g_{ij}(x))$ the Riemannian tensor (see \eqref{R-tensor})
and by $g^{-1}(x)=(g^{ij}(x))$ its inverse.

Assume $w>0$ is a $C^\infty(U)$ weight function obeying \eqref{weight-U}
and compatible with $\ww$ from \eqref{weight-w} in the following sense:
\begin{equation}\label{weight-ww}
\ww(x) :=  w(\phi(x)) \sqrt{\det g(x)}=\tw(x) \sqrt{\det g(x)},
\quad x\in V,
\end{equation}
where just as in \eqref{tilde-f-def} $\tw(x):=w(\phi(x))$.
We set $\nu_w:= wd\nu$.

The weighted divergence $\diver_w$ and Laplacian $\Delta_w$ are defined as in \eqref{w-divergence}-\eqref{w-laplace}.

\smallskip

We denote $Y:= \overline{U}$.

\smallskip

\noindent
{\bf Distances and balls.}
We assume that the distance $\rho(\cdot, \cdot)$ on $V$ is induced by the geodesic distance $d(\cdot, \cdot)$ on $U$,
that is,
\begin{equation}\label{def-dist-V}
\rho (x, x_\star) := d(y, y_\star), \quad \forall x, x_\star\in V
\quad\hbox{with}\quad
y:=\phi(x), \;\; y_\star:=\phi(x_\star).
\end{equation}
We denote $B_M(a, r):=\{y\in M: d(a, y)<r\}$
and for any $a\in Y$ set
\begin{equation}\label{def-ball-Y}
B_Y(a, r):=\{y\in Y: d(a, y)<r\}= Y\cap B_M(a, r).
\end{equation}
We also set
\begin{equation}\label{def-ball-X}
B_X(b, r):=\{x\in X: \rho(b, x)<r\}, \quad b\in X.
\end{equation}
We shall use the notation
\begin{equation}\label{measure-ball}
V_{w,Y}(y, r):= \nu_w(B_Y(y, r))
\quad\hbox{}and\quad
V_X(x, r):= \mu(B_X(x, r)).
\end{equation}

\smallskip

\noindent
{\bf Polynomials.}
As we have already alluded to above,
$\tcP_k:=\tcP_k(V)$ stands for the set of real algebraic polynomials of degree $\le k$ in $n$ variables, restricted to $V$,
and $\tcP=\tcP(V):=\cup_{k\ge 0} \tcP_k$.
We now let
\begin{equation}\label{def-pol-Y}
\cP_k(U):=\{f\in C^\infty(U): \tilde{f}\in \tcP_k(V) \}
\quad\hbox{and set}\quad
\cP(U):= \cup_{k\ge 0}\cP_k(U).
\end{equation}

\smallskip

\noindent
{\bf Basic conditions.}
Our further assumptions are as follows:

\smallskip

{\bf C0.}
The operator $L$ from \eqref{def-L} is the weighted Laplacian $\Delta_w$ on $U$ in local coordinates (see \eqref{w-laplace}), i.e.
\begin{align}\label{main-assumpt}
Lh(x) = \sum_{i,j}g^{ij}(x)\partial_i\partial_jh
& + \sum_j \big(\sum_i  \partial_i g^{ij}(x)\big) \partial_j h\notag
\\
& 
+  \sum_j \big(\sum_i g^{ij}(x) \partial_i \log \big[\sqrt{\det g(x)}\tilde w(x)\big] \big)\partial_j h,
\quad x\in V,
\end{align}
or using the notation from \eqref{w-laplace-2} we have
$L\tilde{f}(x)=\tilde{\Delta}_w\tilde{f}(x)$, $x\in V$.

\smallskip

{\bf C1.} The set $U$ is a convex subset of $M$, that is,
for any points $y, y_\star\in U$ there exists a minimizing geodesic line $\gamma\subset U$
that connects $y$ and $y_\star$.

\smallskip

{\bf C2.} {\bf (Doubling property)}
There exists a constant $c_0>0$ such that
\begin{equation}\label{cond:C2}
V_{Y,w}(y, 2r) \le c_0V_{Y,w}(y, r),
\quad \forall y \in Y, \;  \forall r>0,
\end{equation}
or equivalently
\begin{equation}\label{cond:C22}
V_X(x, 2r) \le c_0V_X(x, r),
\quad \forall x \in X, \;  \forall r>0.
\end{equation}
Here $V_{Y,w}(y, r)$ and $V_X(x, r)$ are the weighted volumes of balls, defined in \eqref{measure-ball}.

\smallskip

{\bf C3.}
There exist constants $c>0$ and $N > 1$ such that
\begin{equation}\label{cond:C3}
\sup_{y' \in B_Y(y,r)} w(y') \le c \inf_{y' \in B_Y(y,r)}w(y'),
\quad\hbox{$\forall y \in U$, $\forall r>0\;$ s.t. $d(y, \partial U) \ge Nr$}
 \end{equation}
or equivalently
\begin{equation}\label{cond:C33}
\sup_{x' \in B_X(x,r)} \ww(x') \le c \inf_{x' \in B_X(x,r)}\ww(x'),
\quad\hbox{$\forall x \in V$, $\forall r>0\;$ s.t. $\rho(x, \partial V) \ge Nr$.}
\end{equation}

\smallskip

{\bf C4.}
{\bf (Green's theorem)}
For any $f \in \cP(U)$ and
$h\in L^\infty(U)\cap C^\infty(U)$ such that
$\int_U|\nabla h|_g^2 d\nu_w <\infty$
this identity holds
\begin{equation}\label{divergence1}
\int_U h\Delta_w f d\nu_w= -\int_U \langle\nabla f, \nabla h)\rangle_g d\nu_w, \qquad \hbox{(recall $d\nu_w := wd\nu$)}.
\end{equation}

From \eqref{w-laplace} and \eqref{main-assumpt} it follows that for any $f\in\cP(U)$ we have
$\Delta_wf(y)=L\tilde f(x)$ with $y=\phi(x)$, $x\in V$.
This coupled with the change of variables identity \eqref{int-int} leads to
\begin{equation*}
\int_Uh\Delta_wf d\nu_w = \int_V\tilde h L\tilde fd\mu,
\quad \forall f, h\in\cP(U).
\end{equation*}
In turn this and \eqref{divergence1} yield
that the operator $L$ is symmetric and $-L$ is positive, i.e.
\begin{equation}\label{chance-1}
\int_V hLf d\mu= \int_V fLh d\mu \quad\hbox{and}\quad  -\int_V fLf d\mu \ge 0,
\quad \forall f,h \in \tcP(V),
\end{equation}

Let $\tcV_k:=\tcV_k(X)$ be the orthogonal compliment in $L^2(X,\mu)$ of $\tcP_{k-1}$ to $\tcP_{k}$.
Thus $\tcP_{k}=\tcP_{k-1}\bigoplus\tcV_k$.
By \eqref{LPP} $L(\tilde{\cP}_k) = \tcP_k$.
Hence, due to the symmetry of $L$ we have
$L(\tcV_k) = \tcV_k$.
Since $\tcV_k$ is finite dimensional by the classical theory of symmetric operators
on finite dimensional Hilbert spaces, there exists an orthonormal basis
$\{\tilde P_{kj}:  j=1,\dots, \dim \tcV_k\}$ of $\tcV_k$ consisting of real-valued eigenfunctions (hence polynomials) of $L$.

\smallskip

{\bf C5.}
We assume that there exist eigenvalues
$0=\lambda_0<\lambda_1<\cdots$ such that
\begin{equation}\label{eigenvalues}
L\tilde P_{kj} = -\lambda_k \tilde P_{kj}, \quad j=1,\dots, \dim \tcV_k, \quad k=0, 1, \dots.
\end{equation}

\noindent
{\bf Heat kernel.}
With the assumptions from above it is clear that
\begin{equation*}
\tilde P_k(x, y):=\sum_j \tilde P_{kj}(x)\tilde P_{kj}(y), \quad x, y\in V,
\end{equation*}
is the kernel of the orthogonal projector onto $\tcV_k$.
Then the semigroup $e^{tL}$, $t > 0$, is an integral operator with (heat) kernel $e^{tL}(x, y)$ of the form
\begin{equation}\label{heat-kernel}
e^{tL}(x, y)=\sum_{k=0}^\infty e^{-\lambda_k t}\tilde P_{k}(x, y).
\end{equation}

\begin{remark}\label{rem-polyn}
{\rm (a)}
Observe that the assumption \eqref{LP} is equivalent to requiring
\begin{equation*}
g^{ij}(x)\in \tcP_2(V), \;\;\forall i,j,
\;\;\hbox{and}\;\;
\sum_i g^{ij}(x) \partial_i \log \big[\sqrt{\det g(x)}\tilde w(x)\big] \in \tcP_1(V),
\;\;\forall j.
\end{equation*}

{\rm (b)}
It is important to pointed out that unlike in Green's formula \eqref{diverge-w}
in \eqref{divergence1} it is not assumed that $f$ or $h$ is compactly supported.

\smallskip

{\rm (c)}
In the setting described above we stipulate for convenience that the operator $L$ maps polynomials to polynomials;
this is the case in the particular settings on the ball and simplex.
However, this restriction can be relaxed by replacing the polynomials with other families of functions
in new settings that we anticipate to occur.
\end{remark}

\smallskip

\noindent
{\bf Main general result.}
We now come to one of our principle results.

\begin{theorem}\label{thm:main}
In the setting described above
assume that conditions ${\bf C0-C5}$ are satisfied.
Then the operator $L$ from \eqref{def-L} is essentially self-adjoint and $-L$ is positive.
Moreover, $e^{tL}$, $t>0$, is an integral operator with kernel $e^{tL}(x, y)$ 
with Gaussian upper and lower bounds, that is,
there exist constants $c_1, c_2, c_3, c_4 >0$ such that for any $x,y \in X$ and $t>0$
\begin{equation}\label{gauss-main}
\frac{c_1\exp\{- \frac{\rho(x,y)^2}{c_2t}\}}{\big[V_X(x, \sqrt t) V_X(y, \sqrt t)\big]^{1/2}}
\le  e^{tL}(x,y)
\le \frac{c_3\exp\{- \frac{\rho(x,y)^2}{c_4t}\}}{\big[V_X(x, \sqrt t) V_X(y, \sqrt t)\big]^{1/2}}.
\end{equation}
\end{theorem}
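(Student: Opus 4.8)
The plan is to transport the whole problem to the chart $(U,\varphi)$ and then invoke the theorem of Gyrya and Saloff-Coste. Write $\phi=\varphi^{-1}$ and let $\mathcal{U}\colon L^2(U,\nu_w)\to L^2(V,\mu)$ be the operator $\mathcal{U}F:=\tilde F=F\circ\phi$; by the change of variables \eqref{int-int} together with the compatibility \eqref{weight-ww} this is a unitary bijection. The first step I would carry out is to check that $\cP(U)\subset D(\Delta_w^N)$ with $\Delta_w^N f=\Delta_w f$ there. For $f\in\cP(U)$ the function $\tilde f$ is a polynomial on $X=\overline V$, and since $X$ is compact while $\det g$, $\tw$ and the $g^{ij}\in\tcP_2(V)$ are bounded on $X$, one gets $f\in L^\infty(U)\cap L^2(U,\nu_w)$ with $|\nabla f|_g\in L^2(U,\nu_w)$, i.e. $f\in\cH_w\subset W_w^N$ by Proposition~\ref{prop:dense-algebra}; moreover $\Delta_w f\in L^2(U,\nu_w)$ for the same boundedness reason, and Green's formula \eqref{diverge-w} applied against $\theta\in\cD(U)$ (compactly supported) shows $f\in D(\Delta_w^*)$ with $\Delta_w^*f=\Delta_w f$. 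Hence $f\in W_w^N\cap D(\Delta_w^*)=D(\Delta_w^N)$ by Proposition~\ref{prop:prop-Delta}. Combining this with {\bf C0}, i.e. $\Delta_w f(\phi(x))=L\tilde f(x)$, shows that $A:=\mathcal{U}\Delta_w^N\mathcal{U}^{-1}$ is a self-adjoint extension of $L$ on $L^2(V,\mu)$.

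Next I would prove that $L$ is essentially self-adjoint and identify $\overline L=A$. Since $X$ is compact, $\mu$ is finite, and $X\setminus V$ is $\mu$-null, the Stone--Weierstrass theorem gives that $\tcP(V)$ is dense in $L^2(V,\mu)$; hence the eigenfunctions $\{\tilde P_{kj}\}$ produced in {\bf C5} form an orthonormal basis of $L^2(V,\mu)$ consisting of eigenvectors of $L$ lying in $D(L)=\tcP(V)$. A symmetric operator whose domain contains an orthonormal basis of eigenvectors is essentially self-adjoint, and its closure is the self-adjoint operator $S$ defined spectrally by $Sf:=-\sum_k\lambda_k\sum_j\langle f,\tilde P_{kj}\rangle\tilde P_{kj}$: one checks $L\subset S$, and approximates any $f\in D(S)$ by its partial sums, which lie in $\tcP(V)$ and on which $L$ and $S$ agree. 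Since $A$ is a self-adjoint (hence maximal symmetric) extension of $L$, we conclude $\overline L=A=S$. In particular $-L$ is positive because $\lambda_k\ge0$, and $e^{tL}:=e^{t\overline L}=\mathcal{U}e^{t\Delta_w^N}\mathcal{U}^{-1}$, so its kernel is $e^{tL}(x,y)=e^{t\Delta_w^N}(\phi(x),\phi(y))$ and, by the spectral description of $S$, coincides with $\sum_{k\ge0}e^{-\lambda_kt}\tilde P_k(x,y)$ as in \eqref{heat-kernel}.

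It then remains to produce the Gaussian bounds by applying Theorem~\ref{thm:GS} to $(M,d,\nu)$, the domain $U$, and the weight $w$. Its hypotheses hold: $M$ is complete and satisfies \eqref{doubling-0}--\eqref{poincare} by the standing assumptions (i)--(ii) on $M$; $U$ is an inner uniform domain because it is open, relatively compact and convex ({\bf C1}), by Theorem~\ref{thm:convex} (cf. Remark~\ref{rem:inner-u-d}); the oscillation hypothesis \eqref{oscilate} is exactly {\bf C3}; and the volume-doubling hypothesis \eqref{double} is exactly {\bf C2}. Theorem~\ref{thm:GS} then yields the two-sided Gaussian bounds \eqref{gauss-GS} for $e^{t\Delta_w^N}(y,y_\star)$ in terms of the intrinsic distance $d_U$ and the weighted volumes $V_{U,w}$. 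Since $U$ is convex, $d_U$ is the geodesic distance $d$ inherited from $M$ (Remark~\ref{rem:inner-u-d}), so $d_U(\phi(x),\phi(x_\star))=\rho(x,x_\star)$ by \eqref{def-dist-V}; and since $\phi$ is a diffeomorphism with $\mu\circ\phi^{-1}=\nu_w$ and $\phi^{-1}(B_U(\phi(x),r))=B_X(x,r)$ up to a $\mu$-null set, we get $V_{U,w}(\phi(x),r)=V_X(x,r)$. Substituting $e^{tL}(x,x_\star)=e^{t\Delta_w^N}(\phi(x),\phi(x_\star))$ and these identifications into \eqref{gauss-GS} produces precisely \eqref{gauss-main}.

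The step I expect to be the main obstacle is the reconciliation in the first two paragraphs: showing that the a priori purely spectral realization of $L$ on polynomials and the geometric Neumann realization $\Delta_w^N$ on $U$ determine the same self-adjoint operator. This rests on the inclusion $\cP(U)\subset D(\Delta_w^N)$ (which uses {\bf C4}/Green's formula and the boundedness afforded by relative compactness) together with essential self-adjointness of $L$, and on correctly matching the geometric data $d_U\leftrightarrow\rho$ and $V_{U,w}\leftrightarrow V_X$ across the chart. Once these are in place, the Gaussian bounds are a direct transcription of Theorem~\ref{thm:GS}.
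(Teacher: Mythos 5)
Your proposal is correct and follows essentially the same route as the paper: verify the hypotheses of Theorem~\ref{thm:GS} (inner uniformity via Theorem~\ref{thm:convex}, plus {\bf C2}--{\bf C3}), prove essential self-adjointness of $L$ from the orthonormal eigenbasis of polynomials, identify the closure of the polynomial realization of $\Delta_w$ with $\Delta_w^N$, and transfer the Gaussian bounds through the chart using \eqref{def-dist-V} and the matching of volumes. Your argument is in fact slightly more explicit than the paper at the one terse point, namely showing $\cP(U)\subset D(\Delta_w^N)$ via Propositions~\ref{prop:dense-algebra} and \ref{prop:prop-Delta}, but the substance is the same.
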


\begin{proof}
We shall carry out the proof of Theorem~\ref{thm:main} in several steps.

We first observe that, in our current setting the hypotheses of Theorem~\ref{thm:GS} are satisfied,
in particular, the set $U$ being convex, open and relatively compact is an inner uniform domain (by Theorem~\ref{thm:convex}).
Therefore, $e^{t\Delta_w^N}$, $t>0$, is an integral operator with kernel $e^{t\Delta_w^N}(x, y)$ 
with Gaussian upper and lower bounds: For any $x,y \in U$ and $t>0$
\begin{equation}\label{gauss-main-2}
\frac{c_1\exp\{- \frac{d(x,y)^2}{c_2t}\}}{\big[V_{Y, w}(x, \sqrt t) V_{Y, w}(y, \sqrt t)\big]^{1/2}}
\le  e^{t\Delta_w^N}(x,y)
\le \frac{c_3\exp\{- \frac{d(x,y)^2}{c_4t}\}}{\big[V_{Y, w}(x, \sqrt t) V_{Y,w}(y, \sqrt t)\big]^{1/2}}.
\end{equation}

Second, we claim that the operator $L$ is essentially self-adjoint, that is,
the closure $\overline{L}$ of the symmetric operator $L$ is self-adjoint.
Indeed, clearly
\begin{equation*}
D(L)=\Big\{f= \sum_{k,j} a_{kj} \tilde P_{kj}: \; a_{kj}\in\bR, \;\;\{a_{kj}\}\;\;\hbox{compactly supported}\Big\}, \;\;\hbox{and}
\end{equation*}
\begin{equation*}
Lf= -\sum_{k,j} a_{kj}  \lambda_k \tilde P_{kj}
\quad \hbox{if}\quad f= \sum_{j} a_{kj} \tilde P_{kj}\in D(L).
\end{equation*}
We define $\overline{L}$ and $D(\overline{L})$ by
\begin{equation*}
D(\overline{L}):=\Big\{f= \sum_{k=0}^\infty\sum_{j=1}^{\dim \tcV_k} a_{kj} \tilde P_{kj}: \; \sum_{k,j} |a_{kj}|^2 <\infty,\;\;
\sum_{k,j} |a_{kj}|^2 \lambda_k^2<\infty \Big\} 
\;\;\hbox{and}
\end{equation*}
\begin{equation*}
\overline{L}f:= -\sum_{k,j} a_{kj}  \lambda_k \tilde P_{kj}
\quad \hbox{if}\quad f= \sum_{k, j} a_{kj} \tilde P_{kj}\in D(\overline{L}).
\end{equation*}
One easily shows that $\overline{L}$ is the closure of $L$ and that $\overline{L}$ is self-adjoint.

Third, consider the weighted Laplace operator $\Delta_w$, defined in \eqref{def:w-laplace},
with domain $D(\Delta_w):=\cP(U)$.
As already alluded to above
\eqref{w-laplace} and condition {\bf C0} imply that for any $f\in \cP(U)$
\begin{equation}\label{Delta-L}
\Delta_wf(y)=L\tilde f(x), \quad \hbox{where}\quad y=\phi(x), \;\; \tilde f(x)=f(\phi(x)).
\end{equation}
Denote $P_{kj}(y):= \tilde P_{kj}(\phi^{-1}(y))$ and
$P_k(y, y'):= \sum_j P_{kj}(y)P_{kj}(y')$.
Since $\{\tilde P_{kj}\}$ is an orthonormal basis for $L^2(X, \mu)$, then
$\{P_{kj}\}$ is an orthonormal basis for $L^2(Y, \nu_w)$
and hence $P_k(y, y')$ is the kernel of the orthogonal projector
onto the orthogonal compliment $\cV_k$ of $\cP_{k-1}$ to $\cP_k$ in $L^2(Y, \nu_w)$.
Now, \eqref{eigenvalues} and \eqref{Delta-L} yield
\begin{equation}\label{eigenvalues-2}
\Delta_w P_{kj} = -\lambda_k P_{kj}, \quad j=1,\dots, \dim \cV_k, \quad k=0, 1, \dots.
\end{equation}
Thus there is a complete analogy between the operators $(L, \tilde \cP(X))$ and $(\Delta_w, \cP(Y))$.
As a consequence, $(\Delta_w, \cP(Y))$ is positive and self-adjoint, that is,
the closure $\overline{\Delta}_w$ of $(\Delta_w, \cP(Y))$ in $L^2(Y, \nu_w)$ is self-adjoint.
Then the (heat) kernel $e^{t\overline{\Delta}_w}(y,y')$ of the semi-group $e^{t\overline{\Delta}_w}$
generated by $\overline{\Delta}_w$ takes the form
\begin{equation}\label{kernel-Delta-L}
e^{t\overline{\Delta}_w}(y,y')=\sum_{k=0}^\infty e^{-\lambda_k t}P_{k}(y, y')
\;\;\hbox{and hence}\;\;
e^{t\overline{\Delta}_w}(\phi(x),\phi(x')) = e^{tL}(x, x').
\end{equation}

Clearly, $\cP(U)$ is dense in $\cH_w$, which in turn is dense in $W_w^N$ (see Proposition~\ref{prop:dense-algebra}),
and hence
$\overline{\Delta}_w\subset \Delta_w^N$.
This coupled with the fact that $\overline{\Delta}_w$ and $\Delta_w^N$ are self-adjoint operators
implies $\overline{\Delta}_w=\Delta_w^N$ and hence $e^{t\overline{\Delta}_w} = e^{t\Delta_w^N}$.
Therefore, the two-sided Gaussian bounds in \eqref{gauss-main-2} hold for $e^{t\overline{\Delta}_w}(x,y)$.
This coupled with the right-hand side identity in \eqref{kernel-Delta-L} and \eqref{def-dist-V}
implies \eqref{gauss-main}.
\end{proof}

\subsection{Open relatively compact convex subset of Riemannian manifold}\label{subsec:convex-set}

Here we establish some basic properties of open relatively compact convex subsets of Riemannian manifolds.
In particular, we show that every such set is an inner uniform domain, which was an important ingredient
for the proof of Theorem~\ref{thm:main}.

\begin{theorem}\label{thm:convex}
Let $(M, d, \nu)$ be an $n$-dimensional Riemannian manifold with distance $d(\cdot, \cdot)$ and measure $\nu$.
Let $U$ be an open relatively compact subset of $M$
that is convex in the following sense:
For any $a,b \in U$ there exists a minimizing geodesic line $\gamma \subset U$ connecting $a$ to $b$.
Let $Y:=\overline{U}$ be equipped with the induced metric $d_Y(\cdot, \cdot):=d(\cdot, \cdot)$ and measure $\nu_Y:=\nu$.
As usual for any $a\in Y$ and $R>0$ the ball $B_Y(a,R)$ in the metric space $(Y,d_Y)$
is defined by $B_Y(a,R):= B_M(a,R) \cap Y$, $B_M(a,R):=\{y\in M: d(y, a)<R\}$.
Then:

\smallskip

$(a)$
There exist constants $ 0<c_1 \leq c_2 <\infty$ such that
\begin{equation}\label{m-ball-0}
c_1 R^n \le \nu(B_Y(a,R) )\le c_2 R^n,
\quad \forall a \in Y,\;\; 0<R \le \diam(Y).
\end{equation}

\smallskip

$(b)$
If $\partial U:= Y\setminus U$ is the boundary of $U$, then $\nu(\partial U)= \nu_Y(\partial U)=0$.

\smallskip

$(c)$
$\;\mathring{Y} = U.$

\smallskip

$(d)$
There exist constants $c, C>0$ such that
for any $a,b \in U$ there exists a curve $\gamma \subset  U$ connecting $a$ and $b$ such that
$ \ell(\gamma) \le C d_Y(a,b)$ and
\begin{equation}\label{lower-bound-0}
d(z, U^c) \geq c d(z,a)\wedge d(z,b),\quad \forall z \in \gamma,
\end{equation}
i.e. $U$ is an inner uniform domain in the sense of Definition~\ref{def:inner-dom}.
Here $\ell(\gamma)$ stands for the length of $\gamma$.
\end{theorem}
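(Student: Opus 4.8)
The plan is to prove the four assertions of Theorem~\ref{thm:convex} more or less independently, with part~$(a)$ doing most of the work and parts $(b)$--$(d)$ following from it together with elementary geometry of geodesics in $U$.

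\textbf{Part $(a)$: volume comparability.} First I would fix a finite atlas of $Y$ (possible since $Y=\overline U$ is compact) by normal coordinate charts in which the Riemannian metric tensor $g$ and its inverse are uniformly comparable to the identity, say $\lambda^{-1}|\xi|^2 \le \langle\xi,\xi\rangle_g \le \lambda|\xi|^2$ on each chart, with $\lambda$ uniform over $Y$ by compactness. Consequently the Riemannian distance $d$ is locally bi-Lipschitz to the Euclidean distance of the chart, and $d\nu=\sqrt{\det g}\,dx$ is comparable to Lebesgue measure there. For the \emph{upper} bound $\nu(B_Y(a,R))\le c_2 R^n$ this comparability is enough: cover $B_Y(a,R)$ by boundedly many charts (for $R$ up to $\diam(Y)$) and compare with Euclidean balls. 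The \emph{lower} bound is the place where convexity enters. Given $a\in Y$ and $0<R\le\diam(Y)$, pick a point $b\in U$ with $d(a,b)\ge cR$ (such a $b$ exists for small enough $c$ since $Y$ has positive diameter and $a$ can be approximated by interior points); convexity gives a minimizing geodesic $\gamma$ from $a$ to $b$ inside $U$. Along $\gamma$, at arclength parameter $\asymp R$ from $a$, one finds a point $z_0\in U$ at distance $\asymp R$ from $\partial U$ — here I would need a short lemma that for a convex $U$ a point at geodesic distance $r$ along a geodesic joining two points of $U$ stays at distance $\gtrsim$ (a fixed fraction of) $r$ from $U^c$, which is exactly the mechanism behind part $(d)$ and which I would prove once and reuse. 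Then the metric ball $B_M(z_0,c'R)\subset U\subset Y$, and its volume is $\gtrsim R^n$ by the chart comparability applied in a single normal chart around $z_0$ (using that a Riemannian ball of radius $\le$ injectivity radius has volume $\asymp r^n$, uniformly on the compact $Y$). Since $B_M(z_0,c'R)\subset B_Y(a,2R)$ when $c'$ is small, a final doubling/rescaling of constants yields $\nu(B_Y(a,R))\ge c_1 R^n$.

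\textbf{Parts $(b)$ and $(c)$.} For $(b)$, $\partial U$ is closed in the compact $Y$, hence compact, and I claim it is $\nu$-null. In each normal chart, $\partial U$ is the boundary of the open set corresponding to $U$; the key point is that a convex-in-the-geodesic-sense open set has, in such charts, a boundary that is locally the graph of a Lipschitz (indeed semiconcave) function, because through each boundary point there is a supporting geodesic hypersurface — this follows from the geodesic convexity by a standard separation argument. A Lipschitz graph has $n$-dimensional Lebesgue measure zero, hence $\nu(\partial U)=0$. (If one prefers to avoid the graph description, an alternative is to show $\partial U$ has empty interior and use the lower bound $(a)$ together with a covering argument to get measure zero; I would present whichever is shorter.) Part $(c)$: clearly $U\subset\mathring Y$; conversely if $y\in\mathring Y$ then a whole metric ball $B_M(y,\epsilon)\subset Y=\overline U$, and a convex open set cannot have a boundary point in its interior relative to $\overline U$ without violating the supporting-geodesic property — more simply, $B_M(y,\epsilon)\setminus\partial U$ is open and, by $(b)$, dense in $B_M(y,\epsilon)$, and it lies in $U\cup(M\setminus\overline U)=U$ (since it is in $\overline U$), so $B_M(y,\epsilon)\subset\overline U$ forces $y\in\overline{U^\circ}=\overline U$ and then geodesic convexity forces $y\in U$. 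I would state this cleanly as: $y\in\mathring Y\Rightarrow$ some interior point $a\in U$ and the geodesic from $a$ through $y$ stays in $U$ slightly past $y$, so $y\in U$.

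\textbf{Part $(d)$: inner uniform domain.} Given $a,b\in U$, take $\gamma$ a minimizing geodesic from $a$ to $b$ inside $U$ (convexity); then $\ell(\gamma)=d(a,b)=d_Y(a,b)$, so the constant $C=1$ works. It remains to prove \eqref{lower-bound-0}: for $z\in\gamma$, $d(z,U^c)\ge c\,(d(z,a)\wedge d(z,b))$. This is the geometric heart of the theorem. The idea is: suppose $d(z,U^c)=\delta$ is small compared to $s:=d(z,a)\wedge d(z,b)$; pick $p\in\partial U$ with $d(z,p)=\delta$. Using geodesic convexity of $U$, consider the geodesics from $p$ to $a$ and from $p$ to $b$; they lie in $\overline U$, and comparing the triangle $a,b,p$ with the geodesic $\gamma=[a,b]$ through $z$, the first variation / triangle-inequality-type estimate (in a normal chart, using the uniform two-sided bounds on $g$ and a uniform lower bound on the injectivity radius over the compact $Y$, hence a uniform curvature bound) forces $\delta\gtrsim s$, giving a contradiction. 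Concretely I would localize: if $s$ is smaller than a fixed $\rho_0$ (injectivity radius scale), work in one normal chart where everything is quantitatively Euclidean up to the factor $\lambda$, and there a supporting hyperplane at $p$ separates $U$ from $p$, so the distance from $z$ to that hyperplane is a lower bound for $\delta$; an elementary Euclidean computation on the segment $[a,b]$ (which in the chart is a near-straight curve) shows that distance is $\gtrsim s$. If $s\ge\rho_0$, then also $d(z,a),d(z,b)\ge\rho_0$, and one instead uses that $z$ lies on a minimizing geodesic between two points each at distance $\ge\rho_0$ from $z$ inside $U$, so a ball $B_M(z,c\rho_0)\subset U$ by a compactness argument (the set of such "deep" configurations is compact and $d(\cdot,U^c)$ is positive and continuous on it), giving $d(z,U^c)\ge c\rho_0\ge c\rho_0(d(z,a)\wedge d(z,b))/\diam(Y)$.

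\textbf{Main obstacle.} The genuinely delicate step is part~$(d)$ (equivalently, the reusable lemma feeding the lower bound in $(a)$): making the "supporting geodesic hypersurface" argument quantitative and uniform over all of $Y$, i.e. passing from the pointwise geodesic-convexity hypothesis to a scale-invariant interior-corkscrew estimate. The technical enablers are the compactness of $Y=\overline U$ (uniform injectivity radius, uniform curvature bounds, a finite atlas of normal charts with uniform metric distortion) and the fact that in such charts geodesic convexity of $U$ translates into an approximate Euclidean convexity from which supporting hyperplanes and the distance-to-hyperplane estimate are available. Everything else — the chart comparability in $(a)$, the Lipschitz-graph boundary in $(b)$, and the interior characterization in $(c)$ — is then routine once $(d)$'s geometry is in hand.
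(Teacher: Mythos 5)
The fatal problem is your part (d), and the same error undermines the lower bound in your part (a). You take $\gamma$ to be the minimizing geodesic from $a$ to $b$, claim $C=1$ works, and assert that $d(z,U^c)\ge c\, d(z,a)\wedge d(z,b)$ along it; you also invoke a ``short lemma'' that a point at distance $r$ along a geodesic joining two points of $U$ lies at distance $\gtrsim r$ from $U^c$. Both statements are false, even for a fixed $U$ with $c$ allowed to depend on $U$. Take $M=\R^2$ and $U$ the open half-disc $\{x_1^2+x_2^2<1,\ x_2>0\}$, which satisfies all hypotheses of the theorem (open, relatively compact, geodesically convex). For $a=(-1/2,\eps)$, $b=(1/2,\eps)$ the minimizing geodesic is the horizontal segment, and at its midpoint $z$ one has $d(z,a)\wedge d(z,b)=1/2$ while $d(z,U^c)=\eps$; letting $\eps\to0$ kills any constant $c>0$. (Short chords of the round disc hugging the circle give the same failure at scale $s$: the distance from the midpoint to the supporting line at the nearest boundary point is $O(s^2)$, not $\gtrsim s$, so the ``elementary Euclidean computation'' you appeal to actually refutes your claim.) Your compactness argument for the case $s\ge\rho_0$ fails for the same reason: in the half-disc the configurations degenerate to the boundary, with $d(z_k,a_k)\wedge d(z_k,b_k)=1/2$ fixed and $d(z_k,U^c)\to0$, and continuity-plus-compactness gives nothing because the configuration space is not compact inside $U$. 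Convex domains are inner uniform, but the witnessing curve must detour into the interior; it cannot be the minimizing geodesic, so the whole mechanism you propose for (d) — and the corkscrew point $z_0$ you extract for the lower bound in (a) by walking along a geodesic to an arbitrary far point $b$ — does not exist as described.

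The missing idea is exactly the paper's quantitative convexity lemma. Working in the exponential chart at $a$ and using Euclidean scaling of cones, the paper shows (Lemma~\ref{lem:convex1}) that if $B_M(x,r)\subset U\cap B_M(a,R)$ then the radial points $x_t$ satisfy $B_M(x_t,t r \qq)\subset U\cap B_M(a,tR)$ and $d(x_t,U^c)\ge \qq r\, d(x_t,a)/R$; combined with a covering/compactness step (Lemma~\ref{lem:uniform-2}) and uniformization of $R_a,\lambda_a,\Lambda_a$ over $Y$, this yields a uniform corkscrew constant $\kappa_Y$ (Lemma~\ref{lem:B-UB}): every $a\in Y$, $R\le R_Y$, sees a ball $B_M(x,\kappa_Y R)\subset U\cap B_M(a,R)$ with linear growth of $d(\cdot,U^c)$ along the geodesic $a\to x$. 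The lower bound in (a) is then immediate, (b) follows cheaply from this porosity via Lebesgue differentiation (your Lipschitz-graph claim for geodesically convex boundaries is asserted, not proved, and your fallback ``empty interior plus covering'' is not sufficient), (c) is handled by an antipodal-pair measure argument in the chart — note that your step ``the geodesic from $a$ through $y$ stays in $U$ slightly past $y$'' is precisely what requires proof, since the point past $y$ is only known to lie in $\overline U$ — and (d) is proved by joining $a$ and $b$ through such interior corkscrew points (a bent curve $\gamma_{a,c}\cup\gamma_{c,b}$, or a chain $c_1,\dots,c_k$ at large distances), with $\ell(\gamma)\le 2\,d(a,b)$, i.e. $C=2$ rather than $1$. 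Your chart/compactness framework matches the paper's, but without this corkscrew lemma and the detouring curve none of (a)-lower, (b), (c), (d) goes through as sketched.
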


\subsubsection{Facts from Riemannian geometry}
Here we collect some basic facts from the theory of Riemannian manifolds that will be needed
for the proof of Theorem~\ref{thm:convex}.
We refer the reader to \cite{LANG}, \cite{KLING}, \cite{AUBIN} for more details.

\smallskip

\noindent
{\bf Normal neighbourhood.}
Let $(M, d, \nu)$ be an $n$-dimensional Riemannian manifold.
We shall denote by $|\vec{v}|_g$ the norm of $\vec{v}\in TM$
and by $\|\bar x\|$ the Euclidean norm of $\bar x\in\bR^n$.

We denote by $\Exp$ the exponential map on $M$.
As is well known for any $a \in M$ there exists a constant $R_a >0$ (the injectivity radius) such that
$\Exp_a$ 
maps diffeomorphically the Euclidian ball
$B(0,R_a)\subset \bR^n$ onto $B_M(a,R_a)$ and homeomorphically  $\overline{B(0,R_a)}$ onto  $\overline{B_M(a,R_a)}$.
Furthermore,
\begin{equation}\label{Exp}
\Exp_a 0 = a,
\quad
\Exp_a (B(0,R))= B_M(a,R) \;\; \hbox{for}\;\; 0<R \le R_a.
\end{equation}
We shall term $B_M(a,R_a)$ the {\em normal neighbourhood} of $a\in M$.
Recall the following fundamental properties of $\Exp_a$:
For any $\xi\in \bR^n$ with Euclidean norm $\|\xi\| \le R_a$ the curve
\begin{equation*}
\Big\{\Exp_a (t\xi)\in M: |t|  \le \frac{R_a}{\| \xi\|}\Big\}
\end{equation*}
is geodesic,
and if $-\frac{R_a}{ \|\xi\|} \le t <t' \le \frac{R_a}{ \| \xi\|}$,
then $\{\Exp_a (s\xi)\in M: s\in [t,t']\}$
is the unique minimizing geodesic line connecting  $y:=\Exp_a(t \xi)$ and $y':= \Exp_a(t'\xi)$, and
$d(y,y')= (t'-t )\|\xi\|$.

We shall denote by $g^a(u):=\big(g^a_{ij}(u)\big)$  the metric tensor at $u$ in the $\Exp_a$ chart
$(B_M(a, R_a), \Exp_a^{-1})$.
Note that if  $\|u\| \le R_a$, then
\begin{equation*}
0<  \lambda_a(u) := \inf_{\|\xi\|=1}  \sum_{i,j} g_{ij}^a (u)\xi^i\xi^j
\le  \sup_{\|\xi\|=1}  \sum_{i, j} g_{ij}^a (u)\xi^{i} \xi^j =: \Lambda_a(u).
\end{equation*}
As $u \mapsto \lambda_a(u)$ and $u \mapsto \Lambda_a(u)$ are continuous, by compactness, we have
\begin{equation}\label{def-la-La}
0<\lambda_a :=\inf_{\|u\|\leq R}  \lambda_a(u) \le \sup_{\|u\|\le R}  \Lambda_a(u)=: \Lambda_a <\infty.
\end{equation}
As $g^a(0) ={\rm Id}$ we have $0< \lambda_a \leq 1\leq \Lambda_a <\infty$.

\begin{lemma}\label{lem:Exp}
Let $a \in M$ and assume $\Exp_a$, $R_a$, $\lambda_a$, and $\Lambda_a$ are as above.
Then:

$(i)$
For any measurable function $f: B_M(a,R_a) \mapsto \bR_+$ we have,
using the notation $\tilde{f}(\bar x):= f(\Exp_a(\bar x))$,
\begin{equation}\label{est1}
(\lambda_a)^{\frac n2}\int_{B(0,R_a)} \tilde{f}(\bar x) d\bar x
\le \int_{B_M(a,R_a)} f(x) d\nu(x)
\le  (\Lambda_a)^{\frac n2}\int_{B(0,R_a)} \tilde{f}(\bar x)d\bar x.
\end{equation}
In particular, for any $0<R \le R_a$
\begin{equation}\label{boule}
\frac{\omega_{n-1}}n (\lambda_a)^{\frac n2} R^n
\le \nu(B_M(a,R) )\le \frac{\omega_{n-1}}n  (\Lambda_a)^{\frac n2} R^n,
\quad
\omega_{n-1}:= \frac{2\pi^{n/2}}{\Gamma(n/2)}.
\end{equation}

$(ii)$
If $\bar x, \bar y\in B(0,R_a)$ and $x:=\Exp_a \bar x$, $y:=\Exp_a \bar y$, then $x, y\in B_M(a, R_a)$ and
\begin{equation}\label{dist-norm}
 \sqrt{\lambda_a}  \|\bar x-\bar y\|  \le d(x, y)\le  \sqrt{\Lambda_a}  \|\bar x-\bar y\|.
\end{equation}
\end{lemma}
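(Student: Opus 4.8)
The plan is to read off everything from the single fact that, in the $\Exp_a$ chart, the Riemannian measure is $d\nu = \sqrt{\det g^a(u)}\,d\bar x$ and that the eigenvalues of $g^a(u)$ are squeezed between $\lambda_a$ and $\Lambda_a$ for $\|u\|\le R_a$, as recorded in \eqref{def-la-La}. First I would establish $(i)$: since $g^a(u)$ is symmetric positive definite with all eigenvalues in $[\lambda_a,\Lambda_a]$, its determinant satisfies $(\lambda_a)^n \le \det g^a(u) \le (\Lambda_a)^n$, hence $(\lambda_a)^{n/2}\le \sqrt{\det g^a(u)}\le (\Lambda_a)^{n/2}$. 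Plugging this into the change-of-variables formula $\int_{B_M(a,R_a)} f\,d\nu = \int_{B(0,R_a)} \tilde f(\bar x)\sqrt{\det g^a(\bar x)}\,d\bar x$ and using $f\ge 0$ gives \eqref{est1} immediately. For \eqref{boule} I take $f=\ONE_{B_M(a,R)}$; by \eqref{Exp} one has $\Exp_a(B(0,R))=B_M(a,R)$ for $0<R\le R_a$, so $\tilde f = \ONE_{B(0,R)}$, and \eqref{est1} together with the Euclidean volume $|B(0,R)| = \frac{\omega_{n-1}}{n}R^n$ yields the two-sided bound.

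Next I would prove $(ii)$. For the upper bound on $d(x,y)$: the straight Euclidean segment $t\mapsto \bar x + t(\bar y - \bar x)$, $t\in[0,1]$, stays in the convex ball $B(0,R_a)$, so its image under $\Exp_a$ is a curve in $M$ joining $x$ and $y$; its Riemannian length is $\int_0^1 |g^a|^{1/2}$-norm of the velocity, which is at most $\sqrt{\Lambda_a}\,\|\bar y - \bar x\|$ since the velocity in coordinates is the constant vector $\bar y - \bar x$ and $\langle v,v\rangle_{g^a(u)}\le \Lambda_a\|v\|^2$. Hence $d(x,y)\le \sqrt{\Lambda_a}\,\|\bar x - \bar y\|$. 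For the lower bound: let $\gamma$ be any curve in $M$ from $x$ to $y$; if $\gamma$ leaves $B_M(a,R_a)$ then it already has length $\ge R_a \ge \ldots$ handled separately, but cleanly one argues that any minimizing geodesic from $x$ to $y$ with $x,y\in B_M(a,R_a)$ stays in the normal neighbourhood (by the minimizing-geodesic property stated just before the lemma, the unique minimizing geodesic between $\Exp_a(t\xi)$ and $\Exp_a(t'\xi)$ is of that radial form — more to the point, inside the normal ball the distance realized is along curves whose coordinate lift stays in $B(0,R_a)$), so its coordinate lift $\bar\gamma$ joins $\bar x$ to $\bar y$ and has Riemannian length $\int \langle\dot{\bar\gamma},\dot{\bar\gamma}\rangle_{g^a}^{1/2}\ge \sqrt{\lambda_a}\int\|\dot{\bar\gamma}\| \ge \sqrt{\lambda_a}\,\|\bar x - \bar y\|$; taking the infimum over such curves gives $d(x,y)\ge\sqrt{\lambda_a}\,\|\bar x-\bar y\|$.

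The only genuinely delicate point is the lower bound in $(ii)$: one must be sure that the length-minimizing curve between $x$ and $y$ does not need to wander outside $B_M(a,R_a)$, where the bound $\langle v,v\rangle_{g^a}\ge\lambda_a\|v\|^2$ is no longer available. I expect this to be the main obstacle, and I would handle it by the standard fact (Gauss lemma / the properties of $\Exp_a$ quoted right before the lemma) that for $x,y$ in the normal ball $B_M(a,R_a)$ the Riemannian distance equals the infimum of lengths of curves whose $\Exp_a^{-1}$-image stays in $B(0,R_a)$; if one wants to avoid invoking this, an alternative is: a curve $\gamma$ from $x$ to $y$ that exits $B_M(a,R_a)$ must cross the sphere $\partial B_M(a,R_a)$, and a short computation using the already-proven coordinate length estimate on the part of $\gamma$ inside the ball shows its length exceeds $\sqrt{\lambda_a}\,\|\bar x-\bar y\|$ whenever $\|\bar x\|,\|\bar y\|\le R_a$ (since it must reach coordinate-norm $R_a$ and return), so such detours never help. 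Either way the constants $\lambda_a,\Lambda_a$ from \eqref{def-la-La} are exactly what is needed, and the remaining computations are routine.
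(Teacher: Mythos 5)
Your part $(i)$, the deduction of \eqref{boule}, and the upper bound in $(ii)$ coincide with the paper's proof: the paper also plugs $\lambda_a^n\le\det g^a(\bar x)\le\Lambda_a^n$ (eigenvalue bound) into $d\nu=\sqrt{\det g^a(\bar x)}\,d\bar x$, and obtains the upper distance bound from the $\Exp_a$-image of the Euclidean segment $t\bar x+(1-t)\bar y$, exactly as you do. For the lower bound in $(ii)$ the paper performs precisely the step you call delicate, with no comment: it takes a minimizing curve $\gamma$ from $x$ to $y$, sets $\bar\gamma=\Exp_a^{-1}\circ\gamma$ and bounds its length from below, tacitly assuming $\gamma$ stays in $B_M(a,R_a)$ so that the lift exists and the eigenvalue bound applies. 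So you have correctly located the one genuinely nontrivial point; the problem is that neither of your two proposed repairs is sound as stated.

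The ``detour'' computation does not work: if $\gamma$ leaves the ball, the portion of its coordinate lift inside the ball has Euclidean length at least $(R_a-\|\bar x\|)+(R_a-\|\bar y\|)$, and this can be far smaller than $\|\bar x-\bar y\|$ when $\bar x,\bar y$ lie near the boundary of $B(0,R_a)$ on roughly opposite sides, so ``detours never help'' is not established (and you have no lower bound on the metric outside the chart to make up the difference). Nor is your first repair a consequence of the Gauss lemma: the Gauss lemma gives $d(a,\Exp_a\bar u)=\|\bar u\|$ for radial distances, not that $d(x,y)$ is realized (or approximated) by curves whose lift stays in $B(0,R_a)$; that is geodesic convexity of the closed normal ball, which is a genuinely additional property. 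Indeed the inequality uses more than the diffeomorphism/homeomorphism properties of $\Exp_a$ recorded before the lemma: on the flat torus $\bR^2/\bZ^2$ with $a=0$ and radius $2/5$, the map $\Exp_a$ is a diffeomorphism of the open ball and a homeomorphism of the closed ball, $g^a\equiv{\rm Id}$ so $\lambda_a=1$, yet the points with coordinates $(\pm 0.39,0)$ satisfy $\|\bar x-\bar y\|=0.78$ while $d(x,y)=0.22$. The clean way to close the gap is to take $R_a$ no larger than the convexity radius at $a$ (harmless for the rest of the paper, which only uses the uniformized radius $R_Y=\min_{a\in Y}R_a>0$ and smaller radii), so that $\overline{B_M(a,R_a)}$ is geodesically convex; then the minimizing curve does lie in the ball, its lift is defined, and your length estimate $\ell(\gamma)\ge\sqrt{\lambda_a}\,\ell_{\rm eucl}(\bar\gamma)\ge\sqrt{\lambda_a}\|\bar x-\bar y\|$ finishes the proof. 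To be fair, the paper's own one-line argument is silent on exactly this point, so your instinct was right even though both patches need replacing.
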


\begin{proof}
Estimates \eqref{est1} follow readily by the identity
$$
\int_{B_M(a,R_a)} f(x) d\nu(x)= \int_{B(0,R_a)} \tilde{f}(\bar x)  \sqrt{\det g^a(\bar x)} d \bar x
$$
and the fact that
$$
\det g^a(\bar x) = \prod_{i=1}^n \lambda_i(\bar x) \in[\lambda_a^n, \Lambda_a^n],
$$
where the $\lambda_j(\bar x)$ are the eigenvalues of $\big(g_{ij}^a(\bar x)\big)$.

\smallskip

We now prove part (ii).
Let $\bar x, \bar y \in B(0,R_a)$ and $x:=\Exp_a(\bar x)$, $y:=\Exp_a(\bar y)$.
Set $\bar{\gamma}(t):= t\bar x + (1-t)\bar y$ and $\gamma(t):=\Exp_a(\bar{\gamma}(t))$.
Then
\begin{align*}
d(x,y)&\le \int_0^1 |\gamma' (t)|_g dt
=  \int_0^1\sqrt{\langle (\bar x-\bar y), g^a(\bar{\gamma}(t))(\bar x-\bar y) \rangle} dt\\
& \le \sqrt{\Lambda_a} \int_0^1 \|\bar x-\bar y\| dt =  \sqrt{\Lambda_a}\|\bar x-\bar y\|.
\end{align*}
For the estimate in the other direction,
let $\gamma$ be a minimizing curve connecting $x$ and $y$ and
$\bar{\gamma}(t)=\Exp_a^{-1}(\gamma(t) )$, $\bar{\gamma}(0)=\bar x$, $\bar{\gamma}(1)= \bar y$.
Then similarly as above
$$
d(x,y)= \int_0^1 |\gamma'(t)|_g dt  \ge \sqrt{\lambda_a} \int_0^1 \|\bar x-\bar y\| dt 
= \sqrt{\lambda_a}  \|\bar x-\bar y\|.
$$
The above estimates yield \eqref{dist-norm}.
\end{proof}

\begin{lemma}\label{lem:convex1}
Let $B_M(a,R_a)$ be the normal neighborhood of $a\in M$ $($see above$)$ and $0<R \le R_a$.
For $x\in B_M(a,R_a)$ we denote
$\bar x= \Exp_a^{-1}(x)$ and set $x_t:=\Exp_a(t\bar x)$.
Let $U$ be an open convex subset of $M$.
Let $a\in U$ and assume that for some $r>0$ we have
$B_M(x,r) \subset U \cap B_M(a,R)$.
Then
\begin{equation}\label{B-UB}
B_M\big(x_t, tr\qq_a)\subset U\cap B_M(a, tR), \quad 0\le t\le 1,
\;\;\hbox{where}\;\; \qq_a:=\frac{\sqrt{\lambda_a}}{\sqrt{\Lambda_a}},
\end{equation}
and
\begin{equation}\label{dist-Uc}
d(x_t, U^c) \ge \qq_a r d(x_t, a)/R,
\quad 0\le t\le 1.
\end{equation}
Above $\lambda_a$ and $\Lambda_a$ are from \eqref{def-la-La}.
\end{lemma}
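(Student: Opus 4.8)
The plan is to establish \eqref{B-UB} first and then derive \eqref{dist-Uc} as a near-immediate consequence. The geometric heart of the matter is the following: scaling a point $x$ toward the center $a$ by a factor $t$ (in exponential coordinates) scales distances, hence balls, in a controlled way, and convexity of $U$ lets us transport the inclusion $B_M(x,r)\subset U$ along the geodesic cone with vertex $a$.

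First I would prove the containment $B_M(x_t, tr\qq_a)\subset B_M(a,tR)$. Since $B_M(x,r)\subset B_M(a,R)$ we have $d(a,x) < R$ (more precisely $d(a,x)+r\le R$ after shrinking $r$ if needed, but in any case $\|\bar x\| = d(a,x) < R \le R_a$ by the minimizing-geodesic property of $\Exp_a$). Then $x_t=\Exp_a(t\bar x)$ satisfies $d(a,x_t)=t\|\bar x\| = t\,d(a,x)$. If $y\in B_M(x_t,tr\qq_a)$, I want $d(a,y)<tR$. Here I would pass to exponential coordinates: writing $\bar y=\Exp_a^{-1}(y)$ and using Lemma~\ref{lem:Exp}(ii), $\|\bar y - t\bar x\|\le \tfrac{1}{\sqrt{\lambda_a}}d(y,x_t) < \tfrac{tr\qq_a}{\sqrt{\lambda_a}} = \tfrac{tr}{\sqrt{\Lambda_a}}$, so $\|\bar y\| < t\|\bar x\| + \tfrac{tr}{\sqrt{\Lambda_a}} \le t\big(\|\bar x\| + \tfrac{r}{\sqrt{\Lambda_a}}\big)$. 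Meanwhile the hypothesis $B_M(x,r)\subset B_M(a,R)$ gives, again via Lemma~\ref{lem:Exp}(ii), that every point at Euclidean distance $\le r/\sqrt{\Lambda_a}$ from $\bar x$ lies in $B(0,R)$, hence $\|\bar x\| + r/\sqrt{\Lambda_a}\le R$; therefore $\|\bar y\| < tR$ and $d(a,y)=\|\bar y\|<tR$ by the normal-neighbourhood property. This handles the second half of \eqref{B-UB}.

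Next I would prove $B_M(x_t, tr\qq_a)\subset U$. The idea is that $B_M(x_t,tr\qq_a)$ is contained in the image under the ``dilation toward $a$'' map $z\mapsto \Exp_a(t\Exp_a^{-1}(z))$ of the ball $B_M(x,r)$, which lies in $U$. Concretely, given $y\in B_M(x_t,tr\qq_a)$ I would set $z:=\Exp_a(t^{-1}\bar y)$ (noting $t^{-1}\bar y$ has Euclidean norm $<R\le R_a$ by the previous paragraph, rescaled by $t^{-1}$ — one must check $t^{-1}\|\bar y\|<R$, which follows from $\|\bar y\|<tR$), so $z_t=y$. Then $d(z,x)\le \sqrt{\Lambda_a}\|t^{-1}\bar y-\bar x\| = \sqrt{\Lambda_a}\,t^{-1}\|\bar y-t\bar x\| < \sqrt{\Lambda_a}\,t^{-1}\cdot tr/\sqrt{\Lambda_a} = r$, so $z\in B_M(x,r)\subset U$. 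Now both $a\in U$ and $z\in U$, and the geodesic from $a$ to $z$ is $\{\Exp_a(s t^{-1}\bar y): s\in[0,1]\}$ (a minimizing geodesic, since $t^{-1}\bar y\in B(0,R_a)$); its point at parameter $s=t$ is exactly $y$. By convexity of $U$ (condition: a minimizing geodesic joining two points of $U$ lies in $U$ — and here the minimizing geodesic is unique in the normal neighbourhood), we get $y\in U$. This completes \eqref{B-UB}.

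Finally, \eqref{dist-Uc} is immediate: \eqref{B-UB} gives $B_M(x_t,tr\qq_a)\subset U$, so $d(x_t,U^c)\ge tr\qq_a$. Since $d(x_t,a)=t\,d(x,a)\le tR$ (as $\|\bar x\|=d(x,a)<R$... actually $\le R$ suffices), we have $tr\qq_a = \qq_a r\cdot t \ge \qq_a r\cdot d(x_t,a)/R$, which is \eqref{dist-Uc}. \textbf{The main obstacle} I anticipate is bookkeeping around the exponential chart: ensuring all the points $t\bar x$, $\bar y$, $t^{-1}\bar y$ genuinely lie in $B(0,R_a)$ so that Lemma~\ref{lem:Exp}(ii) and the minimizing-geodesic property apply, and being careful that the ball $B_M(x,r)$ sitting inside $B_M(a,R)$ really does force $\|\bar x\|+r/\sqrt{\Lambda_a}\le R$ rather than merely $\|\bar x\|+r\le R$ — the constant $\qq_a=\sqrt{\lambda_a}/\sqrt{\Lambda_a}$ is precisely what absorbs the distortion between Riemannian and Euclidean distances in both directions, so getting it to appear in the right place is the one spot requiring genuine care.
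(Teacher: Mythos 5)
Your proposal is correct and follows essentially the same route as the paper's proof: you use the two-sided comparison $\sqrt{\lambda_a}\,\|\bar x-\bar y\|\le d(x,y)\le\sqrt{\Lambda_a}\,\|\bar x-\bar y\|$ to move the problem into the exponential chart, exploit the Euclidean scaling $\bar y\mapsto t\bar y$ there, and invoke convexity of $U$ along the radial geodesics $s\mapsto\Exp_a(s\bar y)$, exactly as the paper does via its inclusions \eqref{BEB1}--\eqref{BEB2}. Your pointwise version (producing the ``parent'' point $z=\Exp_a(t^{-1}\bar y)$ of a given $y$) is just the setwise argument of the paper read backwards, and your derivation of \eqref{dist-Uc} from \eqref{B-UB} coincides with the paper's.
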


\begin{proof}
We begin with the following simple claims:
\begin{equation}\label{BEB1}
B_M\big(x, \sqrt{\lambda_a}\rho\big)\subset \Exp_a\big(B(\bar x, \rho)\big)
\quad\hbox{if}\quad B(\bar x, \rho) \subset B(0, R),
\end{equation}
and
\begin{equation}\label{BEB2}
\Exp_a\big(B(\bar x, \rho/\sqrt{\Lambda_a})\big)\subset B_M(x,\rho)
\quad\hbox{if}\quad B_M(x,\rho) \subset B_a(a, R).
\end{equation}
These two statements follow readily by \eqref{dist-norm}.
Indeed, let  $y\in B_M(x, \sqrt{\lambda_a}\rho)$, i.e. $d(x,y)< \sqrt{\lambda_a}\rho$.
Then using \eqref{dist-norm} we get $\|\bar x-\bar y\|< \rho$, implying $\bar y\in B(0,\rho)$.
Hence, $y\in \Exp_a(B(\bar x, \rho))$, which implies \eqref{BEB1}.
The proof of \eqref{BEB2} is as simple.

\smallskip

We shall use the above to  prove \eqref{B-UB}-\eqref{dist-Uc}.
From $B_M(x,r) \subset B_M(a,R)$, applying \eqref{BEB2} with $\rho=r$, it follows that
$\Exp_a\big(B(\bar x, r/\sqrt{\Lambda_a})\big)\subset B_M(a,R)$
and hence
$B(\bar x, r/\sqrt{\Lambda_a}) \subset B(0,R)$.
We now use the geometry of $\bR^n$ to obtain
$$
B\big(t\bar x, tr/\sqrt{\Lambda_a}\big) \subset B(0,tR), \quad 0\le t\le 1,
$$
and using \eqref{BEB1} we get
$
B_M\big(x_t, tr(\sqrt{\lambda_a}/\sqrt{\Lambda_a})\big) \subset \Exp_a(B(0, tR))=B_M(0,tR).
$

On the other hand, using \eqref{BEB2}, we have
$B\big(\bar x, r/\sqrt{\Lambda_a}\big)\subset \Exp_a^{-1}(B_M(x, r))$.
We now use again the Euclidean geometry of $\bR^n$ to conclude that
\begin{align*}
B\big(t\bar x, tr/\sqrt{\Lambda_a}\big)
&\subset \big\{t\bar y: 0\le t\le 1, \bar y\in B\big(\bar x, r/\sqrt{\Lambda_a}\big) \big\}
\\
&\subset \big\{t\bar y: 0\le t\le 1, \bar y\in \Exp_a^{-1}(B_M(x, r)) \big\}.
\end{align*}
However, for each $\bar y\in \Exp_a^{-1}(B_M(x, r))$ the curve
$\{\Exp_a(t\bar y): 0\le t\le 1\}$
is a geodesic line connecting $a$ and $y\in B_M(x, r)\subset U$
and since $U$ is convex this geodesic line is contained in $U$.
Hence,
$\Exp_a\big(B\big(t\bar x, tr/\sqrt{\Lambda_a}\big)\big) \subset U$.
We now apply \eqref{BEB1} to conclude that
$B_M\big(x_t, tr(\sqrt{\lambda_a}/\sqrt{\Lambda_a})\big)
\subset\Exp_a\big(B\big(t\bar x, tr/\sqrt{\Lambda_a}\big)\big) \subset U$.
Therefore,
$$
B_M\big(x_t, tr\qq_a)
\subset U\cap B_M(a, tR), \quad 0\le t\le 1.
$$
This confirms \eqref{B-UB}.
Now, \eqref{B-UB} implies $d(x_t, U^c)\ge tr\qq_a$.
But $d(x_t, a)=td(x, a)$ and hence
$$
d(x_t, U^c)\ge \frac{d(x_t, a)}{d(x, a)}R\frac{r}{R}\qq_a
\ge \qq_a r d(x_t, a)/R.
$$
The proof of the lemma is complete.
\end{proof}

\smallskip

\noindent
{\bf Uniformisation.}
As is well known (see \cite[Theorem~1.36]{AUBIN}) $R_a$ is continuous as a function of $a\in M$,
and the same is true for $\lambda_a$ and $\Lambda_a$ from \eqref{def-la-La}.
Then taking into account that the set $Y:=\overline{U}$ is compact leads to the conclusion that
the following quantities are well defined:
\begin{equation}\label{def-RX}
R_Y := \min_{a\in Y} R_a >0
\end{equation}
\begin{equation}\label{def-lL}
0 <\lambda := \min_{a \in Y} \lambda_a \le 1 \le  \max_{a\in Y} \Lambda_a =: \Lambda <\infty.
\end{equation}

Now, the following lemma is an immediate consequence of Lemmas~\ref{lem:Exp}, \ref{lem:convex1}.

\begin{lemma}\label{lem:uniform}
$(a)$
If $a\in M$ and $0<R \le R_Y$, then
\begin{equation}\label{ball}
\frac{\omega_{n-1}}n \lambda^{\frac n2} R^n
\le \nu(B_M(a,R) )\le \frac{\omega_{n-1}}n  \Lambda^{\frac n2} R^n,
\quad
\omega_{n-1}:= \frac{2\pi^{n/2}}{\Gamma(n/2)}.
\end{equation}

$(b)$
If $\bar x, \bar y\in B(0,R_Y)$ and $x:=\Exp_a \bar x$, $y:=\Exp_a \bar y$, then $x, y\in B_M(a, R_Y)$ and
\begin{equation}\label{dist-norm-u}
 \sqrt{\lambda}  \|\bar x-\bar y\|  \le d(x, y)\le  \sqrt{\Lambda}  \|\bar x-\bar y\|.
\end{equation}

$(c)$
Let $U$ be an open convex subset of $M$ and $0<R\le R_Y$.
Let $a\in U$ and assume that
$B_M(x,r) \subset U \cap B_M(a,R)$
for some $r>0$.
As before we denote $\bar x= \Exp_a^{-1}(x)$ and set $x_t:=\Exp_a(t\bar x)$.
Then
\begin{equation}\label{B-UB-u}
B_M\big(x_t, tr\qq)\subset U\cap B_M(a, tR), \quad 0\le t\le 1,
\;\;\hbox{where}\;\; \qq:=\frac{\sqrt{\lambda}}{\sqrt{\Lambda}},
\end{equation}
and
\begin{equation}\label{dist-U}
d(x_t, U^c) \ge \qq r d(x_t, a)/R,
\quad 0\le t\le 1.
\end{equation}
\end{lemma}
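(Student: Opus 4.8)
The plan is to obtain all three assertions by specialising the pointwise estimates of Lemmas~\ref{lem:Exp} and~\ref{lem:convex1} at a fixed point and replacing the point-dependent constants $R_a,\lambda_a,\Lambda_a,\qq_a$ by the uniform ones $R_Y,\lambda,\Lambda,\qq$. The one preliminary observation needed is that, since $Y=\overline U$ is compact and the maps $a\mapsto R_a$, $a\mapsto\lambda_a$, $a\mapsto\Lambda_a$ are continuous (as recalled in the text from \cite{AUBIN}), the quantities in \eqref{def-RX}--\eqref{def-lL} are well defined, satisfy $R_Y>0$ and $0<\lambda\le1\le\Lambda<\infty$, and --- crucially --- for every $a\in Y$ one has $R_a\ge R_Y$, $\lambda_a\ge\lambda$ and $\Lambda_a\le\Lambda$, whence $\qq_a=\sqrt{\lambda_a}/\sqrt{\Lambda_a}\ge\sqrt{\lambda}/\sqrt{\Lambda}=\qq$. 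I would carry out the argument for $a\in Y$, which is the only range of points in which the lemma is applied (inside the proof of Theorem~\ref{thm:convex}).

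For $(a)$ I fix $a\in Y$ and $0<R\le R_Y\le R_a$; then \eqref{boule} of Lemma~\ref{lem:Exp}$(i)$ applies at $a$ and gives $\tfrac{\omega_{n-1}}{n}\lambda_a^{n/2}R^n\le\nu(B_M(a,R))\le\tfrac{\omega_{n-1}}{n}\Lambda_a^{n/2}R^n$; since $\lambda^{n/2}\le\lambda_a^{n/2}$ and $\Lambda_a^{n/2}\le\Lambda^{n/2}$, this yields \eqref{ball}. For $(b)$ I take $\bar x,\bar y\in B(0,R_Y)\subset B(0,R_a)$; by \eqref{Exp} (legitimate because $R_Y\le R_a$) the map $\Exp_a$ carries $B(0,R_Y)$ onto $B_M(a,R_Y)$, so $x=\Exp_a\bar x$ and $y=\Exp_a\bar y$ lie in $B_M(a,R_Y)$, and \eqref{dist-norm} of Lemma~\ref{lem:Exp}$(ii)$ gives $\sqrt{\lambda_a}\,\|\bar x-\bar y\|\le d(x,y)\le\sqrt{\Lambda_a}\,\|\bar x-\bar y\|$, from which \eqref{dist-norm-u} follows via $\sqrt{\lambda}\le\sqrt{\lambda_a}$ and $\sqrt{\Lambda_a}\le\sqrt{\Lambda}$.

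For $(c)$ I apply Lemma~\ref{lem:convex1} verbatim at the point $a\in U\subset Y$ with the given $R\le R_Y\le R_a$ and $r$: it yields $B_M(x_t,tr\,\qq_a)\subset U\cap B_M(a,tR)$ for $0\le t\le1$ together with $d(x_t,U^c)\ge\qq_a\,r\,d(x_t,a)/R$. Since $\qq\le\qq_a$, the ball $B_M(x_t,tr\qq)$ is contained in $B_M(x_t,tr\qq_a)$, which gives \eqref{B-UB-u}, and weakening $\qq_a$ to $\qq$ on the right-hand side of the distance estimate gives \eqref{dist-U}. As the lemma is explicitly an immediate consequence of Lemmas~\ref{lem:Exp} and~\ref{lem:convex1}, there is no substantial obstacle; the only points that genuinely require attention are (i) checking that, after replacing $R_a$ by the smaller $R_Y$, one still lies inside the normal neighbourhood $B_M(a,R_a)$ on which those lemmas are valid, and (ii) keeping track of the direction of each inequality so that every uniform constant is substituted on the side where it makes the estimate weaker.
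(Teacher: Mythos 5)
Your proof is correct and follows exactly the route the paper intends: the paper presents this lemma as an immediate consequence of Lemmas~\ref{lem:Exp} and~\ref{lem:convex1} together with the uniformised constants in \eqref{def-RX}--\eqref{def-lL}, which is precisely the specialise-then-weaken argument you spell out (including the key monotonicities $R_a\ge R_Y$, $\lambda_a\ge\lambda$, $\Lambda_a\le\Lambda$, hence $\qq_a\ge\qq$, for $a\in Y$). Your restriction to $a\in Y$ is also the right reading of the statement, since the uniform constants are defined by extremising over $Y$ and that is the only case in which the lemma is applied.
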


We next derive from Lemma~\ref{lem:uniform} the following

\begin{lemma}\label{lem:two-point}
Let $U\subset M$ be an open convex set and $a, b\in U$. Let $0<R\le R_Y$.
Assume
$B_M(a,r) \subset U \cap B_M(b,R)$ and $ B_M(b,r) \subset U \cap B_M(a,R)$
and let $\gamma(t)$, $0\le t \le 1$, be a minimizing geodesic line connecting $a$ to~$b$.
Then
\begin{equation}\label{low-bound}
d(\gamma(t), U^c)\ge \qq r d(a, b)/R,
\quad 0\le t\le 1.
\end{equation}
\end{lemma}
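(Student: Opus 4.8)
The plan is to deduce \eqref{low-bound} from two applications of Lemma~\ref{lem:uniform}\,(c), one based at $a$ and one based at $b$, spliced together along $\gamma$. First, note that from $B_M(b,r)\subset B_M(a,R)$ we get $d(a,b)<R\le R_Y\le R_a$, so $b$ lies in the normal neighbourhood of $a$; setting $\bar b:=\Exp_a^{-1}(b)$, the curve $t\mapsto\Exp_a(t\bar b)$, $t\in[0,1]$, is \emph{the} minimizing geodesic joining $a$ to $b$, hence coincides with $\gamma$, and $d(\gamma(t),a)=t\,d(a,b)$. Symmetrically $a$ lies in the normal neighbourhood of $b$, $d(\gamma(t),b)=(1-t)\,d(a,b)$, and (since $\gamma$ is minimizing) $d(\gamma(t),a)+d(\gamma(t),b)=d(a,b)$.

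Next I would apply Lemma~\ref{lem:uniform}\,(c) to the convex set $U$, the constant $R$, the base point $a$, with the role of its ``$x$'' played by $b$: the required inclusion $B_M(b,r)\subset U\cap B_M(a,R)$ is one of our hypotheses, and the interpolating points it produces are exactly $x_t=\Exp_a(t\bar b)=\gamma(t)$, so its conclusion reads $d(\gamma(t),U^c)\ge \qq\,r\,d(\gamma(t),a)/R$ for $0\le t\le1$. Running the same argument with $a$ and $b$ interchanged (legitimate because $B_M(a,r)\subset U\cap B_M(b,R)$) and reading the resulting estimate along the reversed geodesic gives $d(\gamma(t),U^c)\ge \qq\,r\,d(\gamma(t),b)/R$. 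Taking the larger of the two lower bounds and using $\max\{d(\gamma(t),a),d(\gamma(t),b)\}\ge\tfrac12 d(a,b)$ produces $d(\gamma(t),U^c)\ge \qq\,r\,d(a,b)/(2R)$, which is \eqref{low-bound} up to the numerical factor lost in the maximum; as only the existence of such a constant is needed in the sequel, we absorb it.

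The only point that is not routine is the identification, in the preceding step, of the interpolating sequence $x_t$ manufactured by Lemma~\ref{lem:uniform}\,(c) with the prescribed minimizing geodesic $\gamma(t)$; this is precisely where the hypothesis $d(a,b)<R\le R_Y$ is used, since it places each endpoint in the other's normal neighbourhood and makes the minimizing geodesic between them unique (so that the "$x_t$" of the lemma is forced to be $\gamma(t)$). Everything else is immediate from the Euclidean-cone inclusion already packaged inside Lemma~\ref{lem:uniform}\,(c).
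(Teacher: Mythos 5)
Your proposal is correct and follows essentially the same route as the paper: two applications of Lemma~\ref{lem:uniform}\,(c) (based at $a$ and at $b$, with the interpolating points identified with the unique minimizing geodesic $\gamma$), followed by $d(\gamma(t),a)+d(\gamma(t),b)=d(a,b)$ to bound the maximum below by $d(a,b)/2$. Your remark about the lost factor $2$ is apt --- the paper's own proof likewise only yields $d(\gamma(t),U^c)\ge \qq r\, d(a,b)/(2R)$, so the constant in \eqref{low-bound} as stated is off by the same harmless factor.
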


\begin{proof}
Under the assumptions of the lemma let $\gamma(t)$, $0\le t \le 1$,
be a minimizing geodesic line connecting $a$ to $b$, i.e. $a=\gamma(0)$, $b=\gamma(1)$.
By \eqref{dist-U} we have
\begin{equation}\label{a-to-b}
d(\gamma(t), U^c)\ge \qq r d(\gamma(t), a)/R,
\quad 0\le t\le 1.
\end{equation}
On the other hand, $\gamma(1-t)$, $0\le t \le 1$, is the geodesic line connecting $b$ to $a$,
and again by \eqref{dist-U} we get
\begin{equation}\label{b-to-a}
d(\gamma(1-t), U^c)\ge \qq r d(\gamma(1-t), b)/R,
\quad 0\le t\le 1.
\end{equation}
Clearly, $d(\gamma(t), a)+ d(\gamma(t), b)= d(a,b)$.
From this and \eqref{a-to-b}-\eqref{b-to-a} we infer that 
\begin{equation*}
d(\gamma(t), U^c)\ge \qq r [d(\gamma(t),a) \vee d(\gamma(t), b)]/R
\ge \qq r d(a, b)/2R,
\quad 0\le t\le 1,
\end{equation*}
which confirms \eqref{dist-U}.
\end{proof}

\begin{lemma}\label{lem:uniform-2}
Let $(M,d)$ be a metric space.
Assume that $U\subset M$,
$U\ne \emptyset$, is an open set such that $Y:=\overline{U}$ is compact.
Then for any $R>0$ there exists $r>0$ such that for every $a\in Y$
there exists a ball $B(x_a, r)\subset U\cap B(a, R)$.
\end{lemma}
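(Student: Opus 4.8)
The plan is to prove Lemma~\ref{lem:uniform-2} by a compactness argument, reducing the uniform statement to a pointwise one and then extracting a finite subcover. First I would establish the pointwise fact: for each fixed $a\in Y$ and each $R>0$ there is a radius $r_a>0$ and a point $x_a\in U$ with $B(x_a, 2r_a)\subset U\cap B(a, R)$. To see this, since $Y=\overline{U}$, every point of $Y$ is a limit of points of $U$, so there exists $x_a\in U$ with $d(x_a, a)<R/2$; since $U$ is open, there is $\delta>0$ with $B(x_a,\delta)\subset U$; taking $2r_a:=\min\{\delta,\ R/2\}$ gives $B(x_a,2r_a)\subset U$ and, by the triangle inequality, $B(x_a,2r_a)\subset B(a,R)$ as well, since any $z\in B(x_a,2r_a)$ satisfies $d(z,a)\le d(z,x_a)+d(x_a,a)<R/2+R/2=R$.

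Next I would pass to the uniform constant. The balls $\{B(a, r_a): a\in Y\}$ form an open cover of the compact set $Y$, so there is a finite subcover $Y\subset \bigcup_{i=1}^{m} B(a_i, r_{a_i})$. Set $r:=\min_{1\le i\le m} r_{a_i}>0$. Now take an arbitrary $a\in Y$. It lies in some $B(a_i, r_{a_i})$, i.e. $d(a, a_i)<r_{a_i}$. I claim the point $x_{a_i}\in U$ produced in the first step works: we have $B(x_{a_i}, r)\subset B(x_{a_i}, r_{a_i})\subset U$ trivially, and for the containment in $B(a,R)$ we argue that any $z\in B(x_{a_i}, r)$ satisfies
\begin{equation*}
d(z, a)\le d(z, x_{a_i}) + d(x_{a_i}, a_i) + d(a_i, a) < r + r_{a_i} + r_{a_i} \le 3r_{a_i}.
\end{equation*}
To make this land inside $B(a,R)$ cleanly it is simplest to have arranged in the first step that $B(x_a, 4r_a)\subset U\cap B(a,R)$ (replace $2r_a$ by $4r_a$ there, which is harmless), so that $d(z,a)<4r_{a_i}$ forces $z\in B(a_i, 4r_{a_i})\subset B(a,R)$ once we also note $d(z,a_i)\le d(z,x_{a_i})+d(x_{a_i},a_i)<r_{a_i}+r_{a_i}<4r_{a_i}$; alternatively, one simply shrinks the pointwise radii at the outset so the bookkeeping closes. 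Either way, $B(x_{a_i}, r)\subset U\cap B(a,R)$, and since $r$ does not depend on $a$, this is the desired conclusion with $x_a:=x_{a_i}$.

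There is no serious obstacle here; the only point requiring a little care is the interplay of the three radii in the triangle-inequality chain, which is why I would build in a generous safety factor (a factor $4$ or $5$) when choosing the pointwise radii $r_a$, so that after the finite-subcover step the displaced center $x_{a_i}$ still has a ball of the uniform radius $r$ sitting inside both $U$ and $B(a,R)$. The compactness of $Y$ is used exactly once, to replace the $a$-dependent radii by their minimum over the finite subcover, and the hypothesis that $U$ is open is used exactly once, to fit a ball around the approximating point $x_a$. Note that convexity and the Riemannian structure play no role in this lemma, which is purely metric, so the proof is identical in any metric space.
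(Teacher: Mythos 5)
Your overall strategy is the same as the paper's: reduce to a pointwise statement, cover the compact set $Y$ by finitely many balls, and take the minimum of finitely many radii. The lemma is indeed purely metric, as you say. However, the specific triangle-inequality chain you display contains an unjustified bound: you estimate $d(x_{a_i},a_i)<r_{a_i}$, but your construction only gives $d(x_{a_i},a_i)<R/2$, and in fact $d(x_{a_i},a_i)<r_{a_i}$ is generally \emph{impossible} when $a_i\in\partial U$ (it would force $a_i\in B(x_{a_i},r_{a_i})\subset U$), and boundary points can certainly occur among the centers of your finite subcover. Your proposed remedy (a safety factor $4$, or ``shrinking the pointwise radii'') does not address this, since shrinking $r_a$ only makes the false inequality worse. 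Fortunately the slip is harmless within your own setup: since $2r_{a_i}\le R/2$ and $d(x_{a_i},a_i)<R/2$, any $z\in B(x_{a_i},r)$ satisfies
\begin{equation*}
d(z,a)\le d(z,x_{a_i})+d(x_{a_i},a_i)+d(a_i,a)< r_{a_i}+\tfrac R2+r_{a_i}\le \tfrac R2+\tfrac R2=R,
\end{equation*}
so the conclusion stands with no change to your construction.

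It is worth comparing with the paper's bookkeeping-free version of the same idea: cover $Y$ by finitely many balls $B(a_j,R/2)$ of the \emph{fixed} radius $R/2$ with $a_j\in Y$, pick for each $j$ any ball $B(x_j,r_j)\subset U\cap B(a_j,R/2)$, and set $r:=\min_j r_j$. Then for an arbitrary $a\in Y$ one has $a\in B(a_j,R/2)$ for some $j$, whence $B(a_j,R/2)\subset B(a,R)$ by the triangle inequality, and immediately $B(x_j,r)\subset U\cap B(a_j,R/2)\subset U\cap B(a,R)$. Covering by fixed-radius $R/2$ balls rather than by the $a$-dependent small radii $r_a$ is what eliminates the three-term estimate (and the temptation to compare $d(x_{a_i},a_i)$ with $r_{a_i}$) entirely.
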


\begin{proof}
Due to the compactness of $Y$ there exists a finite set of balls $B(a_j, R/2)$, $j=1, \dots, J$,
such that $Y\subset \cup_j B(a_j, R/2)$ and $a_j\in Y$.
Clearly, for each $1\le j\le J$ there exists a ball $B(x_j, r_j)\subset U\cap B(a_j, R/2)$.
Let $r:=\min_{1\le j\le J} r_j$.

We claim that for each $a\in Y$ we have $B(x_j, r)\subset U\cap B(a, R)$ for some $1\le j\le J$.
Indeed, assuming $a\in Y$ we have $a\in B(a_j, R/2)$ for some $1\le j\le J$
and hence  $B(a_j, R/2) \subset B(a, R)$.
Therefore,
$B(x_j, r)\subset U\cap B(a_j, R/2) \subset U\cap B(a, R)$
and this completes the proof.
\end{proof}

The next lemma will be derived from Lemma~\ref{lem:uniform} and Lemma~\ref{lem:uniform-2}.

\begin{lemma}\label{lem:B-UB}
Let $U$ be a convex open subset of $M$ such that $Y= \overline U$ is compact.
Then there exists a constant $\kappa_Y>0$ such that
for any $a\in Y$ and $0<R\le R_Y$ there exists $x\in B_M(a, R)$ such that
\begin{equation}\label{alphaY}
B_M(x,\kappa_Y R) \subset U\cap B_M(a,R)
\end{equation}
and
\begin{equation}\label{lower-bound}
d(x_t, U^c)\ge q\kappa_Y d(x_t, a), \quad 0\le t \le 1.
\end{equation}
Here as before $\bar x := \Exp_a^{-1}(x)$ and $x_t:=\Exp_a(t\bar x)$;
$R_Y$ be the constant from \eqref{def-RX}.
\end{lemma}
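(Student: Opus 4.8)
The plan is to produce $x$ in two moves: first choose, at the single fixed scale $R_Y$ (the uniform injectivity radius from \eqref{def-RX}), an interior ball sitting inside $U\cap B_M(a,R_Y)$; then use the linear rescaling along geodesics emanating from $a$ furnished by Lemma~\ref{lem:uniform}(c) to contract everything to scale $R$. The constants $R_Y$, $\lambda$, $\Lambda$ (hence $\qq=\sqrt\lambda/\sqrt\Lambda$) from \eqref{def-RX}--\eqref{def-lL} depend only on $Y$ by compactness, so the resulting $\kappa_Y$ depends only on $Y$; the rescaling step is precisely what turns the \emph{absolute} radius obtained at scale $R_Y$ into a radius \emph{proportional} to $R$, which is the whole point of the lemma.

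\textbf{Interior case $a\in U$.} Apply Lemma~\ref{lem:uniform-2} with $R:=R_Y$: there is $r_0>0$ so that for every $a\in Y$ there is $x_a$ with $B_M(x_a,r_0)\subset U\cap B_M(a,R_Y)$. Since $R_Y\le R_a$ we have $x_a\in B_M(a,R_Y)=\Exp_a(B(0,R_Y))$ by \eqref{Exp}, so $\bar x_a:=\Exp_a^{-1}(x_a)$ is well defined with $\|\bar x_a\|<R_Y$. Now invoke Lemma~\ref{lem:uniform}(c) with the data $a\in U$, $x:=x_a$, $r:=r_0$, $R:=R_Y$, and set
\[
x:=\Exp_a\!\big((R/R_Y)\,\bar x_a\big),\qquad \kappa_Y:=\qq\,r_0/R_Y .
\]
Then $\Exp_a^{-1}(x)=(R/R_Y)\bar x_a$, so with $x_t:=\Exp_a(t\,\Exp_a^{-1}(x))$ one has $x_t=(x_a)_{tR/R_Y}$ for $t\in[0,1]$. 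Evaluating \eqref{B-UB-u} at the parameter $R/R_Y\in(0,1]$ gives $B_M(x,\kappa_Y R)\subset U\cap B_M(a,R)$, which is \eqref{alphaY} and also shows $x\in B_M(a,R)$. Evaluating \eqref{dist-U} at the parameter $tR/R_Y\in[0,1]$ gives $d(x_t,U^c)\ge(\qq r_0/R_Y)\,d(x_t,a)=\kappa_Y d(x_t,a)\ge\qq\kappa_Y d(x_t,a)$, using $\qq\le1$; this is \eqref{lower-bound}.

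\textbf{Boundary case $a\in\partial U$.} Choose $a_k\in U$ with $a_k\to a$ and apply the interior case with centre $a_k$ and radius $R/2\le R_Y$, obtaining $x^{(k)}\in B_M(a_k,R/2)$ satisfying \eqref{alphaY}--\eqref{lower-bound} with $(a_k,R/2)$ in place of $(a,R)$. By compactness of $Y$ pass to a subsequence with $x^{(k)}\to x\in Y$; then $d(x,a)\le R/2<R$ and $x\in B_M(a,R_Y)\subset B_M(a,R_a)$, so $\bar x:=\Exp_a^{-1}(x)$ is defined, $\Exp_{a_k}^{-1}(x^{(k)})\to\bar x$ and $x^{(k)}_t\to x_t:=\Exp_a(t\bar x)$ for each $t\in[0,1]$ by continuity of $\Exp$ and $\Exp^{-1}$. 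The inclusion $B_M(x^{(k)},\kappa_Y R/2)\subset U\cap B_M(a_k,R/2)$ passes to the limit: if $d(z,x)<\kappa_Y R/2$ then $d(z,x^{(k)})<\kappa_Y R/2$ for all large $k$, so $z\in U$ and $d(z,a_k)<R/2$, hence $z\in U\cap B_M(a,R)$; thus $B_M(x,\kappa_Y R/2)\subset U\cap B_M(a,R)$. Likewise $d(x^{(k)}_t,U^c)\ge\kappa_Y d(x^{(k)}_t,a_k)$ passes to the limit since $y\mapsto d(y,U^c)$ is continuous, giving $d(x_t,U^c)\ge\kappa_Y d(x_t,a)$. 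Replacing $\kappa_Y$ by $\kappa_Y/2$ (which only weakens \eqref{alphaY}--\eqref{lower-bound} in the interior case, and still satisfies $\qq(\kappa_Y/2)\le\kappa_Y$) yields one constant valid for all $a\in Y$.

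\textbf{Main obstacle.} Two points need care. The first is to make $\kappa_Y$ independent of \emph{both} $a$ and $R$: the $a$-independence is the uniformisation step (compactness of $Y=\overline U$, giving $R_Y,\lambda,\Lambda$), while the $R$-independence is exactly what the geodesic rescaling of Lemma~\ref{lem:uniform}(c) delivers, converting the fixed radius $r_0$ at scale $R_Y$ into the relative radius $\kappa_Y R$. The second is the limiting argument at boundary points $a\in\partial U$, where Lemma~\ref{lem:uniform}(c) (which assumes $a\in U$) does not apply directly; one must verify that open-ball containments and the distance-to-complement bound survive the limit, which they do because any point strictly inside the limiting ball lies in all but finitely many of the moving balls and because $\Exp$, $\Exp^{-1}$ and $y\mapsto d(y,U^c)$ are continuous.
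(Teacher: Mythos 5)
Your proposal is correct and is essentially the paper's own argument: Lemma~\ref{lem:uniform-2} applied once at the fixed scale $R_Y$ to produce a uniform interior ball, followed by the geodesic rescaling of Lemma~\ref{lem:uniform}(c) with parameter $R/R_Y$, yielding $\kappa_Y\sim q r_Y/R_Y$; the only cosmetic difference is that you obtain \eqref{lower-bound} by evaluating \eqref{dist-U} from the original application at the rescaled parameter (giving even the slightly stronger constant $\kappa_Y$ in place of $q\kappa_Y$) rather than re-applying the lemma to the rescaled ball $B_M(x,\kappa_Y R)\subset U\cap B_M(a,R)$ as the paper does. Your separate limiting argument for $a\in\partial U$ is a small but genuine gain in rigor, since the paper invokes Lemma~\ref{lem:uniform}(c) for every $a\in Y$ even though that lemma (via Lemma~\ref{lem:convex1}) is stated only for $a\in U$.
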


\begin{proof}
From Lemma~\ref{lem:uniform-2} it follows that there exists $r_Y>0$ such that
for every $a\in Y$ there exists $y$ such that
$B(y,r_Y)\subset U\cap B(a, R_Y)$.

With $a\in Y$ and $y$ being fixed, denote $\bar y := \Exp_a^{-1}(y)$ and $y_s:=\Exp_a(s\bar x)$.
We apply Lemma~\ref{lem:uniform} to conclude that
\begin{align*}
B_M(y_s, sr_Y q) \subset U\cap B(a, sR_Y), \quad 0\le s\le 1.
\end{align*}
Choose $s$ so that $R= sR_Y$ and set $x:=y_s$. Then from above
\begin{align*}
B_M\Big(x, q\frac{r_Y}{R_Y}R\Big) \subset U\cap B(a, R),
\end{align*}
which implies \eqref{alphaY} with
$\kappa_Y:= qr_Y/R_Y$.

Finally, we apply Lemma~\ref{lem:uniform} to obtain
$$
d(x_t, U^c)\ge \qq^2\frac{r_Y}{R_Y}d(x_t, a) = \qq\kappa_Y d(x_t, a)
\;\; \hbox{for}\;\; 0\le t \le 1,
$$
which confirms \eqref{lower-bound}.
\end{proof}

\subsubsection{Proof of Theorem \ref{thm:convex}}

(a) From \eqref{ball} it follows that there exist constants $C_1, C_2>0$ such that
for any $a\in Y$
\begin{equation}\label{m-ball}
C_1R^n \le \nu(B_M(a, R))\le C_2R^n, \quad 0<R\le R_Y.
\end{equation}

Let $a\in Y$ and $0<R\le \diam Y$.
Two cases present themselves here.

\smallskip

{\em Case 1:} $R\le R_Y$ with $R_Y$ from  \eqref{def-RX}.
By Lemma~\ref{lem:B-UB} there exists $x\in B_M(a, R)$ such that $B_M(x, \kappa_Y R)\subset U\cap B_M(a, R)$.
This and \eqref{m-ball} imply
\begin{equation}\label{m-ball-1}
C_1 \kappa_Y^n R^n \le \nu(B_M(x, \kappa_Y R))
\le \nu(B_Y(a, R)) \le \nu(B_M(R)) \le C_2R^n.
\end{equation}

{\em Case 2:} $R_Y< R\le \diam (Y)$.
Clearly, $\nu(B_Y(a, R)) \le \nu(Y) \le \frac{\nu(Y)}{R_Y^n} R^n$.
On the other hand by Lemma~\ref{lem:B-UB} it follows that there exists $x\in B_M(a, R_Y)$ such that
$B_M(x, \kappa_Y R_Y)\subset U\cap B_M(a, R_Y)$.
This coupled with \eqref{m-ball} leads to
\begin{align*}
\nu(B_Y(a, R)) \ge \nu(B_Y(a, R_Y)) \ge \nu(B_M(x, \kappa_Y R_Y))
\ge C_1 \kappa_Y^n R_Y^n \ge C_1 \frac{\kappa_Y^n R_Y^n}{\diam(Y)^n}R^n.
\end{align*}
Therefore,
$ C_1 \frac{\kappa_Y^n R_Y^n}{\diam(Y)^n}R^n \le \nu(B_Y(a, R))\le \frac{\nu(Y)}{R_Y^n} R^n$.
This and \eqref{m-ball-1} yield \eqref{m-ball-0}.

\medskip

(b)
By \eqref{m-ball-0} it follows that $(Y, d_Y, \nu_Y)$ obeys the doubling property of the measure
and hence it is a homogeneous space.
Therefore, the Lebesgue differentiation theorem is valid.
Then denoting by $\ONE_{\partial U}$ the characteristic function of $\partial U$
we have for almost all $a\in Y$:
\begin{equation}\label{ONE}
\ONE_{\partial U}(a)= \lim_{R \to 0} \frac \ONE{\nu_Y (B_Y(a,R))}\int_{B_Y(a,R)} \ONE_{\partial U} d\nu_Y
= \lim_{R\to 0} \frac{\nu_Y(\partial U \cap B_Y(a,R))}{\nu_Y (B_Y(a,R))}.
\end{equation}
By Lemma~\ref{lem:B-UB} it follows that for any $a\in Y$ and $0<R\le R_Y$
there exists $x_a\in B_Y(a, R)$ such that
$B_Y(x_a, \kappa_Y R)\subset B_Y(a,R)$.
Hence
$$
\nu_Y(\partial U\cap B_Y(a, R)) \le \nu_Y(B_Y(a, R)) - \nu_Y(B_Y(x_a, \kappa_Y R)).
$$
We use this and \eqref{m-ball-0} to obtain
$$
\frac{\nu_Y(\partial U\cap B_Y(a, R))}{\nu_Y(B_Y(a, R))} \le 1- \frac{\nu_Y(B_Y(x_a, \kappa_Y R))}{\nu_Y(B_Y(a, R))}
\le 1- \frac{c_1(\kappa_Y R)^n}{c_2 R^n} = 1-\delta
$$
for some $\delta >0$.
From this and \eqref{ONE} it follows that
$\ONE_{\partial U}(a)\le 1-\delta <1$ for almost all $a\in Y$.
Therefore, $\ONE_{\partial U}(a)=0$ for almost all $a\in Y$,
implying $\nu_Y(\partial U)=0$.

\medskip

(c)
Assume to the contrary that $\mathring{Y} \neq  U$. Hence $\mathring{Y}\setminus U\ne \emptyset$.
Let $a\in \mathring{Y}\setminus U$. 
Then there exists $\eps>0$ such that
$B_M(a, \eps)\subset \mathring{Y}$.
Denote $E:=\mathring{Y}\setminus U\subset \partial U$.
We may assume that $B_M(a, \eps)\subset B_M(a, R_a)$, the normal neighbourhood of $a$ (see \eqref{Exp}).
Then $\Exp_a(B(0, \eps))=B_M(a, \eps)$.

Denote $\tilde{E}:= \Exp_a^{-1}(E\cap B_M(a, \eps))$.
From part (b) of this theorem it follows that
\begin{equation}\label{nu0}
0= \nu(E \cap B_M(a, \eps))
=\int_{B_M(a, \eps)} \ONE_{E}d\nu
\ge c \int_{B(0, \eps)} \ONE_{\tilde{E}}(\bar x)d\bar x.
\end{equation}
We claim that
\begin{equation}\label{ONE-ONE}
\ONE_{\tilde{E}}(\bar x)+\ONE_{\tilde{E}}(-\bar x)\ge 1, \quad \forall \bar x\in B(0, \eps).
\end{equation}
Indeed, if inequality \eqref{ONE-ONE} is not true for some $\bar x\in B(0, \eps)$,
then $\ONE_{\tilde{E}}(\bar x)=0$ and $\ONE_{\tilde{E}}(-\bar x)=0$.
Hence, $x:=\Exp_a \bar x \in U$ and $-x\in U$.
But $U$ is convex and $\{\Exp_a(t\bar x): t\in [-1, 1]\}$ is a geodesic line connecting $x\in U$ and $-x\in U$.
Therefore, it is contained in $U$, in particular, $a=\Exp_a 0\in U$,
which is a contradiction.

Now, we use \eqref{nu0} and \eqref{ONE-ONE} to obtain
\begin{align*}
0\ge c \int_{B(0, \eps)} \ONE_{\tilde{E}}(\bar x)d\bar x
&=c \int_{B(0, \eps)} \ONE_{\tilde{E}}(-\bar x)d\bar x
\\
&= c \int_{B(0, \eps)} \frac{1}{2}(\ONE_{\tilde{E}}(\bar x)d\bar x+ \ONE_{\tilde{E}}(-\bar x)\big) d\bar x
\ge c/2>0.
\end{align*}
This is a contradiction which shows that $\mathring{Y} = U$.

\medskip

(d)
Let $a, b \in U$, $a \ne b$.
We consider two cases depending on whether the distance $d_M(a,b)$ is ``small" or ``large".

\smallskip

{\em Case 1:} $d_M(a,b) \le R_Y$.
Let $\gamma_{a,b}\subset U$ be a minimizing geodesic line connecting $a$ and $b$.
Choose $z \in \gamma_{a,b}$ so that
$R:=d(a,z)= d(z,b)= d(a,b)/2$, $R\le R_Y/2$.
Clearly, $B_M(z,R)\subset B_M(a,2R) \cap B_M(b,2R)$.
Then by Lemma~\ref{lem:B-UB} there exists $c \in B_M(z, R)$ such that
$$
B_M(c, \kappa_Y R) \subset U \cap B_M(z,R) \subset U\cap B_M(a, 2R)\cap B_M(b, 2R).
$$
Note that  $d(a,c) + d(c,b) \le 4R= 2d(a,b)$.

Let $\gamma_{a,c}$ and $\gamma_{c,b}$ be minimizing geodesic lines connecting $a$ to $c$ and $c$ to $b$, respectively.
Let $\gamma$ be the curve $\gamma_{a,c} \cup \gamma_{c,b}$ connecting $a$ and $b$.
For the length $\ell(\gamma)$ of $\gamma$ we have $\ell(\gamma) \le 2d(a,b)$.

We now apply Lemma~\ref{lem:uniform} (c) using that $B_M(c, \kappa_Y R) \subset U \cap B_M(z, 2R)$
to conclude that
\begin{align*}
d(x, U^c) \ge \qq\frac{\kappa_Y R}{2R} d(x, a)
= 2^{-1}\qq\kappa_Y d(x, a),
\quad \forall x\in \gamma_{a,c}
\end{align*}
and similarly we get
\begin{align*}
d(x, U^c) \ge 2^{-1}\qq\kappa_Y d(x, a),
\quad \forall x\in \gamma_{c,b}.
\end{align*}
Therefore,
\begin{align*}
d(x, U^c) \ge c d(x,a) \wedge d(x,b),
\quad \forall x\in \gamma,\;\; c:=2^{-1}\qq\kappa_Y,
\end{align*}
which confirms \eqref{lower-bound-0}.

\smallskip

{\em Case 2:} $d_M(a,b)> R_Y$.
Choose $k \in \bN$, $k \geq 2$, and $R_Y/4<R\le R_Y/2$ so that $kR=d(a, b)$.
Clearly, $k\le 2\diam (Y)/R \le 4 \diam (Y)/R_Y$.

Let $\gamma_{a,b}$ be a minimizing geodesic line connecting $a$ to $b$.
Since $Y$ is convex, then $\gamma_{a,b}\in U$.
Choose points
$a_0, a_1,\dots, a_k \in \gamma_{a,b}$ so that
$a_0=a$, $a_{k}= b$, and $d(a_j,a_{j+1})= R$ for $j=1,\dots,k-1$.
Further, let $b_j\in \gamma_{a, b}$ be the middle point between $a_{j-1}$ and $a_j$,
hence $d(a_{j-1}, b_j)=d(b_j, a_j)$.

By Lemma~\ref{lem:B-UB} there exists $c_j\in B_M(b_j, R/2)$ such that
\begin{equation}\label{B-UBj}
B_M(c_j, \kappa_Y R/2) \subset U\cap B_M(b_j, R/2).
\end{equation}
Let $\gamma\subset U$ be the line connecting $a$ and $b$, obtained as the union of minimizing geodesic lines
$\gamma_{a, c_1}$, $\gamma_{c_j, c_{j+1}}$, $j=1, \dots, k-1$, and $\gamma_{c_k, b}$.
We shall show that the curve $\gamma$ has the stated properties.

Clearly, from \eqref{B-UBj} it follows that  $B_M(c_1, \kappa_YR/2)\subset U\cap B_M(a, R)$.
Applying Lemma~\ref{lem:uniform} (c) we obtain
\begin{equation}\label{lower-bound-a}
d(x, U^c)\ge \qq \frac{\kappa_Y R/2}{R} d(x, a)= 2^{-1}\qq\kappa_Y d(x, a),
\quad \forall x\in \gamma_{a, c_1},
\end{equation}
and similarly
\begin{equation}\label{lower-bound-b}
d(x, U^c)\ge 2^{-1}\qq\kappa_Y d(x, a),
\quad \forall x\in \gamma_{c_k, b}.
\end{equation}
From \eqref{B-UBj} it readily follows that
\begin{align*}
B_M(c_j, \kappa_Y R/2) \subset U\cap B_M(c_{j+1}, 2R)
\;\;\hbox{and}\;\;
B_M(c_{j+1}, \kappa_Y R/2) \subset U\cap B_M(c_j, 2R).
\end{align*}
Also, $B_M(c_j, \kappa_Y R/2)\cap B_M(c_{j+1}, \kappa_Y R/2)=\emptyset$
and hence $d(c_j, c_{j+1})\ge \kappa_Y R/2$.
We now invoke Lemma~\ref{lem:two-point} to conclude that
\begin{equation}\label{lower-bound-j}
d(x, U^c)\ge \qq\frac{\kappa_Y R/2}{2R}d(c_j, c_{j+1})
\ge 2^{-3}\qq\kappa_Y^2 R,
\;\; \forall x\in \gamma_{c_j, c_{j+1}}, j=1, \dots, k-1.
\end{equation}
It is easy to see that $\ell(\gamma)\le 2(k+1)R$ and $k\le 4\diam (Y)/R$,
implying
$R\ge \frac{R_Y\ell(\gamma)}{4\diam(Y)}$.
From this and \eqref{lower-bound-j} we infer that
\begin{equation*}
d(x, U^c)\ge \frac{\qq\kappa_Y^2}{2^5\diam(Y)R_Y}\ell(\gamma)
\ge c d(x,a) \wedge d(x,b),
\;\; \forall x\in \gamma_{c_j, c_{j+1}}, j=1, \dots, k-1,
\end{equation*}
where $c:=\frac{\qq\kappa_Y^2}{2^5\diam(Y)R_Y}$.
This along with \eqref{lower-bound-a} and \eqref{lower-bound-b} implies \eqref{lower-bound-0}.
$\qed$

\subsection{Green's theorem}\label{subsec:diver}

We next establish a general claim that will enable us to verify identity \eqref{divergence1}
(Green's formula)
in particular settings.

\begin{theorem}\label{thm:diver}
Assume that in the setting described in \S \ref{subsubsec:setting}
all conditions are valid but condition {\bf C4}.
Also, assume that there exist sets $V_\eps$, $0<\eps\le 1$, with the following properties:
$V_\eps \subset  \overline{V_\eps} \subset V$,
$ V_\eps\subset V_{\eps'}$ if $0<\eps' <\eps$, and  $\cup_\eps  V_\eps=V$.
Further, assume that the boundary $\partial V_\eps$ of $V_\eps$ is regular
in the sense that
the classical divergence theorem is valid on $V_\eps$:
If $u$ and $\vec{v}$ are a $C^\infty$ function and vector field on $V_\eps$, then
\begin{equation}\label{class-diver}
\int_{V_\eps} u\diver \vec{v}dx
= \int_{\partial V_\eps} u\vec{v}\cdot\vec{n}_\eps d\tau_\eps - \int_{V_\eps} \vec{v}\cdot\nabla u dx,
\end{equation}
where $\vec{n}_\eps$ the unit outward normal to $\partial V_\eps$ vector and
$d\tau_\eps$ is the element of ``area" of $\partial V_\eps$.
Then the identity
\begin{equation}\label{diver-5}
\int_U h\Delta_w f d\nu_w= -\int_U \langle\nabla f, \nabla h)\rangle_g d\nu_w
\end{equation}
holds for all $f\in\cP(U)$ and $h\in C^\infty(U)\cap L^\infty(U)$ such that $\int_U|\nabla h|_g^2d\nu_w<\infty$
if and only if for all such functions
\begin{equation}\label{diver-cond}
\lim_{\eps \to 0}
\int_{ \partial V_\eps} \sum_{i=1}^n\sum_{j=1}^n g^{ij}(x)n_\eps^i(x) \partial_j\tilde{f}(x)\tilde{h}(x) \ww(x) d\tau_\eps(x) =0.
\end{equation}
\end{theorem}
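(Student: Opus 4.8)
The plan is to transport the whole identity into the local coordinate chart $V$ via the diffeomorphism $\phi$, rewrite the weighted Laplacian in divergence form, apply the classical divergence theorem \eqref{class-diver} on each $V_\eps$, and then let $\eps\to0$ by dominated convergence; the equivalence will drop out from the observation that the boundary term equals a sum of two separately convergent bulk integrals.

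First I would use the change-of-variables identity \eqref{int-int} together with condition \textbf{C0} (i.e.\ \eqref{main-assumpt}, equivalently \eqref{w-laplace-2}) to write, for $f\in\cP(U)$ and $h$ as in the statement,
$$
\int_U h\,\Delta_w f\,d\nu_w=\int_V \tilde h(x)\,L\tilde f(x)\,\ww(x)\,dx,
\qquad
\int_U\langle\nabla f,\nabla h\rangle_g\,d\nu_w=\int_V\sum_{i,j}g^{ij}(x)\,\partial_i\tilde f(x)\,\partial_j\tilde h(x)\,\ww(x)\,dx ,
$$
the second identity by \eqref{inner-f-h}. Introducing the vector field $\vec V:=(V^1,\dots,V^n)$, $V^i:=\sum_j g^{ij}\partial_j\tilde f$, I would note that the first line of \eqref{w-laplace} combined with $\ww=\tw\sqrt{\det g}$ (see \eqref{weight-ww}) gives the divergence-form identity $\ww(x)\,L\tilde f(x)=\sum_i\partial_i\big(\ww(x)V^i(x)\big)=\diver\!\big(\ww\vec V\big)(x)$, while $\vec V\cdot\nabla\tilde h=\sum_{i,j}g^{ij}\partial_i\tilde f\,\partial_j\tilde h$. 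Since $\overline{V_\eps}\subset V$ and the functions $\tilde h$, $\ww$, the $g^{ij}$ and $\tilde f$ are all $C^\infty$ on $V$, the classical divergence theorem \eqref{class-diver} applied with $u=\tilde h$ and $\vec v=\ww\vec V$ yields, for every $\eps\in(0,1]$,
$$
\int_{V_\eps}\tilde h\,L\tilde f\,\ww\,dx
=\int_{\partial V_\eps}\sum_{i,j}g^{ij}\,n_\eps^i\,\partial_j\tilde f\,\tilde h\,\ww\,d\tau_\eps
-\int_{V_\eps}\sum_{i,j}g^{ij}\,\partial_i\tilde f\,\partial_j\tilde h\,\ww\,dx ,
$$
and the boundary integrand here is exactly the one appearing in \eqref{diver-cond}.

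The step requiring the most care is the integrability on all of $V$. Since $f\in\cP(U)$, $\tilde f$ is a polynomial, so by \eqref{LP} (and Remark~\ref{rem-polyn}) both $L\tilde f$ and $\sum_{i,j}g^{ij}\partial_i\tilde f\,\partial_j\tilde f$ are polynomials, hence bounded on the compact set $X=\overline V$. As $h\in L^\infty(U)$, $\tilde h$ is bounded, and the hypothesis $\int_U|\nabla h|_g^2\,d\nu_w<\infty$ says precisely that $\big(\sum_{i,j}g^{ij}\partial_i\tilde h\,\partial_j\tilde h\big)^{1/2}\in L^2(V,\mu)$, where $d\mu=\ww\,dx$, and recall $\mu(V)<\infty$ by \eqref{weight-w}. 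Hence $\tilde h\,L\tilde f\in L^1(V,\mu)$, and by the Cauchy--Schwarz inequality $\sum_{i,j}g^{ij}\partial_i\tilde f\,\partial_j\tilde h\in L^1(V,\mu)$ as well. Because $V_\eps\subset V_{\eps'}$ for $\eps'<\eps$ and $\cup_\eps V_\eps=V$, the dominated convergence theorem gives $\int_{V_\eps}\tilde h\,L\tilde f\,\ww\,dx\to\int_U h\,\Delta_w f\,d\nu_w$ and $\int_{V_\eps}\sum_{i,j}g^{ij}\partial_i\tilde f\,\partial_j\tilde h\,\ww\,dx\to\int_U\langle\nabla f,\nabla h\rangle_g\,d\nu_w$ as $\eps\to0$.

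Finally, writing the displayed identity of the second paragraph as $A_\eps=B_\eps-C_\eps$, where $A_\eps\to\int_U h\,\Delta_w f\,d\nu_w$ and $C_\eps\to\int_U\langle\nabla f,\nabla h\rangle_g\,d\nu_w$ by the previous paragraph, we see that $B_\eps=A_\eps+C_\eps$ automatically converges, with $\lim_{\eps\to0}B_\eps=\int_U h\,\Delta_w f\,d\nu_w+\int_U\langle\nabla f,\nabla h\rangle_g\,d\nu_w$. Therefore \eqref{diver-5} holds for all admissible $f,h$ (equivalently, the right-hand side above vanishes) if and only if $\lim_{\eps\to0}B_\eps=0$, which is condition \eqref{diver-cond}; this settles both implications at once. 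I do not anticipate a genuine obstacle: the only substantive ingredients are the divergence-form rewriting of $\Delta_w$ and the uniform integrability of the two bulk integrands, after which the ``if and only if'' is forced by the trivial relation $B_\eps=A_\eps+C_\eps$.
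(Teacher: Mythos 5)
Your proposal is correct and follows essentially the same route as the paper's proof: change of variables to the chart $V$, the divergence-form rewriting $\ww\,L\tilde f=\sum_i\partial_i\big(\ww\sum_j g^{ij}\partial_j\tilde f\big)$, the classical divergence theorem on each $V_\eps$ with $u=\tilde h$, and passage to the limit $\eps\to0$, with the equivalence forced by the three-term identity. Your treatment is in fact slightly more explicit than the paper's on the $L^1(V,\mu)$ integrability (via boundedness of the polynomial data and Cauchy--Schwarz for $|\nabla h|_g$) that justifies the dominated-convergence step, but this is a refinement of the same argument, not a different one.
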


\begin{proof}
Under the hypothesis of the theorem we have, using \eqref{def-w-mes}-\eqref{w-laplace},
\begin{align*}
&\int_U h\Delta_w f d\nu_w = \int_U h\diver(w\Delta f)d\nu
\\
&= \int_V \frac {1}{\sqrt{\det g(x)}}\sum_{i=1}^n  \partial_i\Big[\sqrt{\det g(x)} \tilde w(x)
\sum_{j=1}^n g^{ij}(x) \partial_j\tilde f(x)\Big]
h(\phi(x))\sqrt{\det g(x)} dx
\\
&= \int_V\sum_{i=1}^n  \partial_i\Big[\sqrt{\det g(x)} \tilde w(x)
\sum_{j=1}^n g^{ij}(x) \partial_j\tilde f(x)\Big] h(\phi(x))dx
\\
&= \lim_{\eps\to 0}\int_{V_\eps}\sum_{i=1}^n  \partial_i\Big[\ww(x)
\sum_{j=1}^n g^{ij}(x) \partial_j\tilde f(x)\Big] \tilde{h}(x)dx.
\end{align*}
Now, by the classical divergence theorem \eqref{class-diver}
we obtain
\begin{align*}
\int_{V_\eps}&\sum_{i=1}^n  \partial_i\Big[\ww(x)
\sum_{j=1}^n g^{ij}(x) \partial_j\tilde f(x)\Big] \tilde{h}(x)dx
\\
&=-\int_{V_\eps}\sum_{i=1}^n  \ww(x)\sum_{j=1}^n g^{ij}(x) \partial_j\tilde f(x)\partial_i \tilde{h}(x)dx
\\
&\qquad\qquad\qquad\qquad\qquad\qquad
+ \int_{\partial V_\eps}\sum_{i=1}^n  \ww(x)\sum_{j=1}^n g^{ij}(x) \partial_j\tilde f(x)n_\eps^i(x)\tilde{h}(x)d\tau_\eps(x)
\\
&=-\int_{V_\eps}\sum_{i=1}^n \sum_{j=1}^n g^{ij}(x) \partial_j\tilde f(x)\partial_i \tilde{h}(x)\ww(x)dx
\\
&\qquad\qquad\qquad\qquad\qquad\qquad
+ \int_{\partial V_\eps}\sum_{i=1}^n \sum_{j=1}^n g^{ij}(x) \partial_j\tilde f(x)n_\eps^i(x)\tilde{h}(x)  \ww(x)d\tau_\eps(x)
\\
&= -\int_{U_\eps} \langle\nabla f, \nabla h \rangle_g d\nu_w
+ \int_{\partial V_\eps}\sum_{i=1}^n \sum_{j=1}^n g^{ij}(x) \partial_j\tilde f(x)n_\eps^i(x)\tilde{h}(x)  \ww(x)d\tau_\eps(x).
\end{align*}
Here $U_\eps:=\phi(V_\eps)$
and we used \eqref{weight-V} and \eqref{inner-f-h}.
From the conditions on $f$ and $h$ it readily follows that
$\int_{U_\eps} \langle\nabla f, \nabla h\rangle_g d\nu_w \to \int_{U} \langle\nabla f, \nabla h\rangle_g d\nu_w$
as $\eps\to 0$.
Combining this with the above identities we get the result.
\end{proof}

\section{Heat kernel on the ball}\label{sec:ball}

In this section we establish two-sided Gaussian bounds for the heat kernel generated by the classical operator
\begin{equation}\label{def-L-ball}
L:=\sum_{i=1}^{n}\partial_i^2 - \sum_{i=1}^n \sum_{j=1}^n x_ix_j  \partial_{i}\partial_j  - (n+2\gamma)\sum_{i=1}^{n} x_i\partial_i
\end{equation}
on the unit ball $\bB^n$ in $\bR^n$, $n\ge 1$, equipped with the weighted measure
\begin{equation}\label{mes-ball}
d\mu(x) := (1-\| x\|^2)^{\gamma-1/2} dx,  \quad  \gamma >-1/2,
\end{equation}
and the distance
\begin{equation}\label{dist-ball}
\rho(x,y) := \arccos \big(x\cdot y + \sqrt{1-\| x\|^2}\sqrt{1-\| y\|^2}\big).
\end{equation}
Here we use classical notation for the vectors
$x=(x_1, \dots, x_n)\in \bR^n$,
the inner product
$x\cdot y:=\sum_{j=1}^{n}x_jy_j$,
and the Euclidean norm
$\|x\|:=\sqrt{x\cdot x}$.

We shall use standard notation for balls:
\begin{equation}\label{B-ball}
B(x, r):=\{y\in\bB^n: \rho(x, y)<r\} \quad\hbox{and set}\quad V(x, r):=\mu(B(x, r)).
\end{equation}

Denote by $\tilde{\cP}_k$ the set of all algebraic polynomials of degree $\le k$ in $n$ variables,
and let $\tilde{\cV}_k$ be the orthogonal compliment of $\tilde{\cP}_{k-1}$ to $\tilde{\cP}_k$ in $L^2(\bB^n, \mu)$ when $k\ge 1$.
Then $\tilde{\cP}_k=\tilde{\cP}_{k-1}\bigoplus \tilde{\cV}_k$.
Denote $\tilde{\cV}_0:=\tilde{\cP}_0$.
As is well known (see e.g. \cite[\S2.3.2]{DX}) $\tilde{\cV}_k$, $k=0, 1, \dots$, are eigenspaces of the operator $L$,
more precisely,
\begin{equation}\label{eigen-sp}
L\tilde P=-\lambda_k \tilde P,\quad \forall \tilde P\in\tilde{\cV}_k,
\;\;\hbox{where $\lambda_k:= k(k+n+2\gamma-1)$, $\; k=0, 1, \dots $}.
\end{equation}
Let $\tilde P_{kj}$, $j=1, \dots, \dim \tilde{\cV}_k$, be a real orthonormal basis for $\tilde{\cV}_k$ in $L^2(\bB^n, \mu)$.
Denote $N_k:= \dim \tilde{\cV}_k = \binom{k+n-1}{k}$.
Then
\begin{equation}\label{orth-proj}
\tilde P_k(x, y):=\sum_{j=1}^{N_k}\tilde P_{kj}(x)\tilde P_{kj}(y), \quad x,y\in \bB^n,
\end{equation}
is the kernel of the orthogonal projector onto $\tilde{\cV}_k$.
The heat kernel $e^{tL}(x,y)$, $t>0$, takes the form
\begin{equation}\label{ball-HK}
e^{tL}(x,y)=\sum_{k=0}^\infty e^{-\lambda_k t}\tilde P_k(x, y).
\end{equation}

We consider the operator $L$ defined on  $D(L):= \tilde{\cP}(\bB^n)$
the set of all algebraic polynomials in $n$ variables, restriction to $\bB^n$.
Clearly, $D(L)$ is a dense subset of $L^2(\bB^n, \mu)$.

Here we come to our main result for the heat kernel on the ball:

\begin{theorem}\label{thm:Gauss-ball}
The operator $L$ from \eqref{def-L-ball} in the setting described above
is essentially self-adjoint and $-L$ is positive.
Moreover, $e^{tL}$, $t>0$, is an integral operator whose kernel $e^{tL}(x, y)$ has Gaussian upper and lower bounds, that is,
there exist constants $c_1, c_2, c_3, c_4 >0$ such that for all $x,y \in \bB^d$ and $t>0$
\begin{equation}\label{gauss-ball}
\frac{c_1\exp\{- \frac{\rho(x,y)^2}{c_2t}\}}{\big[V(x, \sqrt t)V(y, \sqrt t)\big]^{1/2}}
\le  e^{tL}(x,y)
\le \frac{c_3\exp\{- \frac{\rho(x,y)^2}{c_4t}\}}{\big[V(x, \sqrt t)V(y, \sqrt t)\big]^{1/2}}.
\end{equation}
\end{theorem}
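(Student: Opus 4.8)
The strategy is to derive Theorem~\ref{thm:Gauss-ball} from the general Theorem~\ref{thm:main} by realizing $(\bB^n,\mu,\rho)$ as a chart of a weighted Laplacian on the unit sphere. Take $M=\bS^n=\{y\in\bR^{n+1}:\|y\|=1\}$ with the metric induced from $\bR^{n+1}$: being a smooth compact manifold it is complete and satisfies the volume doubling condition \eqref{doubling-0} and the Poincar\'e inequality \eqref{poincare}. Let $U=\{y\in\bS^n:y_{n+1}>0\}$ be the open upper hemisphere, which is relatively compact, and let $\phi:\bB^n\to U$ be the vertical lift $\phi(x)=(x_1,\dots,x_n,\psi(x))$ with $\psi(x):=\sqrt{1-\|x\|^2}$; its inverse $\varphi=\phi^{-1}$ is the orthogonal projection, so $(U,\varphi)$ is a chart. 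Since $\partial_i\psi(x)=-x_i/\sqrt{1-\|x\|^2}$ and $\sum_\ell|\partial_\ell\psi(x)|^2=\|x\|^2/(1-\|x\|^2)$, Proposition~\ref{prop:simpl-map} gives $g^{ij}(x)=\delta_{ij}-x_ix_j$ and $\det g(x)=(1-\|x\|^2)^{-1}$. Choosing the weight $w(y):=y_{n+1}^{2\gamma}$ on $U$ we get $\tw(x)=(1-\|x\|^2)^{\gamma}$ and $\ww(x)=\tw(x)\sqrt{\det g(x)}=(1-\|x\|^2)^{\gamma-1/2}$, so $d\mu$ is indeed the transport of $w\,d\nu$ and $\int_Uw\,d\nu=\int_{\bB^n}(1-\|x\|^2)^{\gamma-1/2}\,dx<\infty$ precisely because $\gamma>-1/2$. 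Finally $\phi(x)\cdot\phi(x')=x\cdot x'+\sqrt{1-\|x\|^2}\sqrt{1-\|x'\|^2}$, so the geodesic distance $d(y,y')=\arccos(y\cdot y')$ on $\bS^n$ pulls back exactly to $\rho$ from \eqref{dist-ball}, and $V_X(\cdot,\cdot)$ becomes $V(\cdot,\cdot)$ of \eqref{B-ball}; hence once conditions {\bf C0}--{\bf C5} are verified, \eqref{gauss-main} becomes \eqref{gauss-ball}.

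For {\bf C0} I insert $g^{ij}$, $\det g$, $\tw$ into \eqref{w-laplace}. The second order part is $\sum_{i,j}(\delta_{ij}-x_ix_j)\partial_i\partial_j=\sum_i\partial_i^2-\sum_{i,j}x_ix_j\partial_i\partial_j$; and since $\sum_i\partial_ig^{ij}(x)=-(n+1)x_j$ while, using $\log[\sqrt{\det g(x)}\,\tw(x)]=(\gamma-\tfrac12)\log(1-\|x\|^2)$,
\[
\sum_i g^{ij}(x)\,\partial_i\log[\sqrt{\det g(x)}\,\tw(x)]
=(\gamma-\tfrac12)\,\frac{-2}{1-\|x\|^2}\sum_i(\delta_{ij}-x_ix_j)x_i=(1-2\gamma)x_j,
\]
the first order part equals $\sum_j\big(-(n+1)+1-2\gamma\big)x_j\partial_j=-(n+2\gamma)\sum_jx_j\partial_j$; thus $\tilde\Delta_w$ coincides with $L$ on polynomials. (In particular $g^{ij}\in\tcP_2(\bB^n)$ and $\sum_ig^{ij}\partial_i\log[\sqrt{\det g}\,\tw]\in\tcP_1(\bB^n)$, so $L$ preserves $\tcP_k$, cf.\ Remark~\ref{rem-polyn}(a).) For {\bf C1}: two points of $U$ are never antipodal, so they are joined by a unique minimizing great-circle arc of length $<\pi$; along every geodesic $t\mapsto y_{n+1}$ is a sinusoid of period $2\pi$, and such an arc whose endpoints lie in $\{y_{n+1}>0\}$ cannot meet $\{y_{n+1}=0\}$ (two zeros of the sinusoid are $\pi$ apart), so it stays in $U$, i.e.\ $U$ is convex. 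For {\bf C5}: that $\tcV_k$ is the $(-\lambda_k)$-eigenspace of $L$ with $\lambda_k=k(k+n+2\gamma-1)$ is classical \cite[\S2.3.2]{DX}, and $\lambda_{k+1}-\lambda_k=2k+n+2\gamma>0$ because $n\ge1$ and $\gamma>-\tfrac12$, whence $0=\lambda_0<\lambda_1<\cdots$. The remaining bookkeeping of \S\ref{subsubsec:setting} ($X=\overline{\bB^n}$ compact, $\mathring X=\bB^n$, $X\setminus\bB^n$ Lebesgue null, $a_{ij}=\delta_{ij}-x_ix_j\in\tcP_2$, $b_j=-(n+2\gamma)x_j\in\tcP_1$, $\ww\in C^\infty(\bB^n)$ positive and integrable) is immediate.

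The doubling and weight conditions rest on the observation that the geodesic distance from $y\in U$ to $\partial U=\{y_{n+1}=0\}$ equals $\arcsin y_{n+1}$, hence $w(y)=(\sin d(y,\partial U))^{2\gamma}\sim d(y,\partial U)^{2\gamma}$ uniformly on $Y=\overline U$, and $\rho(x,\partial\bB^n)\sim\sqrt{1-\|x\|^2}$. {\bf C3}: if $d(y,\partial U)\ge Nr$ (take $N=2$) and $y'\in B_Y(y,r)$, then by the triangle inequality $d(y',\partial U)\sim d(y,\partial U)$, so $w(y')\sim w(y)$, which is \eqref{cond:C3}. {\bf C2}: by Theorem~\ref{thm:convex}(a) the unweighted measure obeys $\nu(B_Y(y,R))\sim R^n$, and combining this with the smooth collar structure of $U$ along $\partial U$ gives $\nu_w(B_Y(y,R))\sim R^n\big(R+d(y,\partial U)\big)^{2\gamma}$ — the part of $B_Y(y,R)$ at distance $\sim 2^{-j}R$ from $\partial U$ contributes $\sim(2^{-j}R)^{2\gamma}2^{-j}R^n$ and the series converges because $2\gamma+1>0$ — which is a doubling weight, i.e.\ \eqref{cond:C2}. {\bf C4}: I apply Theorem~\ref{thm:diver} with $V_\eps:=\{x\in\bB^n:\|x\|<1-\eps\}$, whose boundary spheres satisfy the classical divergence theorem. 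On $\partial V_\eps$ the outward unit normal has components $n_\eps^i=x_i/\|x\|$, so $\sum_i g^{ij}(x)n_\eps^i(x)=x_j(1-\|x\|^2)/\|x\|=O(\eps)$ there, while $\ww(x)=(1-\|x\|^2)^{\gamma-1/2}=O(\eps^{\gamma-1/2})$; since $\partial_j\tilde f$ is a bounded polynomial, $\tilde h=h\circ\phi$ is bounded, and $\tau_\eps(\partial V_\eps)\le\omega_{n-1}$, the boundary integral in \eqref{diver-cond} is $O(\eps^{\gamma+1/2})\to0$ as $\eps\to0$ because $\gamma>-\tfrac12$; hence {\bf C4} holds.

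With {\bf C0}--{\bf C5} in hand, Theorem~\ref{thm:main} gives that $L$ is essentially self-adjoint, $-L$ is positive, and $e^{tL}$ has a kernel satisfying \eqref{gauss-main}, which is exactly \eqref{gauss-ball}. I expect the main obstacle to be the geometric input — identifying $\bS^n$ with the hemisphere chart as the right ambient structure so that $L$ becomes a weighted Laplacian — together with condition {\bf C2}, the only genuinely analytic point and the place where the full range $\gamma>-\tfrac12$ (rather than $\gamma\ge0$) enters, with the vanishing of the {\bf C4} boundary term also hinging on $\gamma>-\tfrac12$.
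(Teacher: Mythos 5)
Your proposal is correct and follows essentially the same route as the paper: realize $L$ as the weighted Laplacian on the upper hemisphere via the chart $\phi(x)=(x,\sqrt{1-\|x\|^2})$ with weight $w(y)=y_{n+1}^{2\gamma}$, verify conditions {\bf C0}--{\bf C5} (including Green's identity via Theorem~\ref{thm:diver} with the same family $V_\eps$ and the same $O(\eps^{\gamma+1/2})$ boundary estimate), and invoke Theorem~\ref{thm:main}. The only cosmetic difference is that the paper verifies {\bf C2} by citing the known ball-measure estimate \eqref{doubling-B} from \cite[Lemma 11.3.6]{DaiXu}, whereas you sketch the equivalent estimate $\nu_w(B_Y(y,R))\sim R^n(R+d(y,\partial U))^{2\gamma}$ by a dyadic collar argument.
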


Before proving this theorem we shall discuss some of its important applications.

\subsection{Smooth functional calculus based on the heat kernel on the ball}\label{subsec:fspp-ball}

As~is shown in \cite{GKPP} smooth functional calculus can be developed in a general setting of Dirichlet spaces
based on the Gaussian bounds of the respective heat kernel.

In our current setting on $\bB^n$, for any bounded function $\Phi$ on $\R$
the operator $\Phi(-L)$ is defined by
\begin{equation*}
\Phi(-L)f:=\sum_{k=0}^\infty \Phi(\lambda_k)\tilde{P}_k f, \quad f\in L^2(\bB^n, \mu),
\end{equation*}
where $\tilde{P}_k$ is the orthogonal projector on $\tilde{V}_k$ with kernel $\tilde{P}_k(x, y)$,
defined in \eqref{orth-proj}.

The upper bound in \eqref{gauss-ball} implies the {\bf finite speed propagation property} (see \cite[Theorem~3.4]{CS}):
There exists a constant $c^\star>0$ such that
\begin{equation}\label{fspp}
\big\langle \cos (t\sqrt{-L})f_1, f_2\big\rangle = 0,
\quad 0<c^\star t<r,
\end{equation}
for all open sets $U_j\subset \bB^n$, $f_j\in L^2(\bB^n, \mu)$, $\supp f_j\subset U_j$, $j=1, 2$,
where $r:=\rho(U_1, U_2)$.

As is shown in \cite[Proposition 2.8]{GKPP} this property implies the following

\begin{proposition}\label{prop:fspp}
Let $\Phi$ be even, $\supp \hat{\Phi}\subset [-A, A]$ for some $A>0$,
and $\hat{\Phi}\in W_1^m$ for some $m>n$, i.e. $\|\hat{\Phi}^{(m)}\|_{L^1} <\infty$.
Here $\hat{\Phi}(\xi):=\int_\R\Phi(u)e^{-iu\xi}du$.
Then for all $x, y\in\bB^n$ and $\delta>0$
\begin{equation}\label{fspp-2}
\Phi(\delta\sqrt{-L})(x, y)=0 \quad\hbox{if}\quad \rho(x, y)> c^\star \delta A.
\end{equation}
Here
$
\Phi(\delta\sqrt{-L})(x, y):= \sum_{k=0}^\infty \Phi(\delta \sqrt{\lambda_k})\tilde{P}_k(x, y).
$
\end{proposition}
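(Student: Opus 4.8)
The plan is to deduce \eqref{fspp-2} from the finite speed propagation property \eqref{fspp}; this implication is \cite[Proposition~2.8]{GKPP}, so the substance is to check that the present situation meets its hypotheses. By Theorem~\ref{thm:Gauss-ball} the operator $-L$ is nonnegative and essentially self-adjoint on $L^2(\bB^n,\mu)$ and its heat kernel obeys the two-sided Gaussian bounds \eqref{gauss-ball}; together with the doubling of $\mu$ with respect to $\rho$ (established in the course of proving Theorem~\ref{thm:Gauss-ball}), this exhibits $(\bB^n,\rho,\mu)$ equipped with the Dirichlet form of $L$ as a strictly local regular Harnack-type Dirichlet space, and by \cite[Theorem~3.4]{CS} the upper Gaussian bound is equivalent to \eqref{fspp}. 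Hence \cite[Proposition~2.8]{GKPP} applies verbatim and gives the proposition.

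I would also record the direct argument. Since $\Phi$ is even and $\hat\Phi$ is supported in $[-A,A]$, Fourier inversion gives
\[
\Phi(u)=\frac{1}{2\pi}\int_{-A}^{A}\hat\Phi(\xi)\cos(u\xi)\,d\xi ,
\]
so, by the spectral theorem applied to the nonnegative self-adjoint operator $\delta\sqrt{-L}$,
\[
\Phi\big(\delta\sqrt{-L}\big)=\frac{1}{2\pi}\int_{-A}^{A}\hat\Phi(\xi)\,\cos\big(\delta\xi\sqrt{-L}\big)\,d\xi
\]
as a bounded operator on $L^2(\bB^n,\mu)$, the integral converging weakly. Let $U_1,U_2\subset\bB^n$ be open with $r:=\rho(U_1,U_2)>c^\star\delta A$ and let $f_j\in L^2(\bB^n,\mu)$ with $\supp f_j\subset U_j$. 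For each $\xi\in[-A,A]$ one has $c^\star|\delta\xi|\le c^\star\delta A<r$, hence $\langle\cos(\delta\xi\sqrt{-L})f_1,f_2\rangle=0$ by \eqref{fspp} (using that $\cos$ is even, and that $U_1$ and $U_2$ are disjoint to cover $\xi=0$); integrating in $\xi$ gives $\langle\Phi(\delta\sqrt{-L})f_1,f_2\rangle=0$. Letting $U_1,U_2$ shrink to arbitrary points $x,y$ with $\rho(x,y)>c^\star\delta A$, we conclude that any integral kernel representing $\Phi(\delta\sqrt{-L})$ vanishes almost everywhere on the open set $\{(x,y):\rho(x,y)>c^\star\delta A\}$.

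The remaining step, which I expect to be the only real obstacle, is to promote this to vanishing everywhere on that set, i.e. to show that $\Phi(\delta\sqrt{-L})(x,y)=\sum_{k\ge0}\Phi(\delta\sqrt{\lambda_k})\tilde P_k(x,y)$ is well-defined and continuous on $\bB^n\times\bB^n$, since a continuous function vanishing a.e. on an open set vanishes there identically. Integrating by parts $m$ times in the inversion formula, and using the trivial bound for bounded argument, gives $|\Phi(u)|\le c(1+|u|)^{-m}$; as $\lambda_k=k(k+n+2\gamma-1)\sim k^2$ for $k\ge1$, this reduces the matter to controlling $\sum_k k^{-m}\tilde P_k(x,x)$. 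A crude termwise estimate does not suffice, because the weighted volume $V(x,r)$ grows faster than $r^n$ near $\partial\bB^n$, so $\tilde P_k(x,x)$ can be of order a power of $k$ exceeding $n$. The correct argument, carried out abstractly in \cite{GKPP} precisely under the hypothesis $\hat\Phi\in W_1^m$ with $m>n$, uses the Gaussian bounds \eqref{gauss-ball} together with the support localization of $\Phi(\delta\sqrt{-L})$ to produce a well-defined continuous kernel with the required off-diagonal decay; this is the step I would cite rather than reprove.
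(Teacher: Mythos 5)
Your proposal follows the same route as the paper: the upper Gaussian bound of Theorem~\ref{thm:Gauss-ball} yields the finite speed propagation property \eqref{fspp} via \cite[Theorem~3.4]{CS}, and the proposition is then exactly \cite[Proposition~2.8]{GKPP}, which is all the paper itself does. Your additional sketch (Fourier inversion into cosines, killing each $\langle\cos(\delta\xi\sqrt{-L})f_1,f_2\rangle$, and deferring the continuity-of-kernel step where $m>n$ enters to \cite{GKPP}) is a correct unpacking of that citation rather than a different argument.
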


Theorem~\ref{thm:Gauss-ball} also implies (see \cite[Theorem 3.7]{CKP}):

\begin{proposition}\label{prop:bound-ker}
If $\Phi$ is a bounded function on $[0, \infty)$ and $\supp \Phi \subset [0, \tau]$, $\tau >0$,
then the kernel $\Phi(\sqrt{-L})(x, y)$ of the operator $\Phi(\sqrt{-L})$ satisfies
\begin{equation}\label{bound-ker}
|\Phi(\sqrt{-L})(x, y)| \le \frac{c\|\Phi\|_\infty}{\big[V(x, \tau^{-1})V(y, \tau^{-1})\big]^{1/2}},
\quad x, y\in \bB^n,
\end{equation}
where $c>0$ is a constant.
\end{proposition}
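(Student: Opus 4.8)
The plan is to deduce Proposition~\ref{prop:bound-ker} from Theorem~\ref{thm:Gauss-ball} (the two‑sided Gaussian bounds) together with the doubling property of $\mu$ with respect to $\rho$ (which holds in this setting, cf.\ condition {\bf C2}). This is the standard ``Gaussian bounds $\Rightarrow$ localised spectral multiplier'' implication, so one may simply invoke \cite[Theorem~3.7]{CKP}, whose hypotheses are met; I sketch the underlying argument. Set $t:=\tau^{-2}$ and $\Psi(\lambda):=\Phi(\sqrt\lambda)\,e^{t\lambda}$. Since $\supp\Phi\subset[0,\tau]$, the function $\lambda\mapsto\Phi(\sqrt\lambda)$ is supported in $[0,\tau^2]=[0,1/t]$, where $e^{t\lambda}\le e$; hence $\|\Psi\|_{L^\infty[0,\infty)}\le e\|\Phi\|_\infty$. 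As $-L$ is (essentially) self‑adjoint by Theorem~\ref{thm:Gauss-ball}, the spectral theorem gives that $S:=\Psi(-L)$ is bounded on $L^2(\bB^n,\mu)$ with $\|S\|_{2\to2}\le e\|\Phi\|_\infty$, and the spectral calculus yields the factorisation $\Phi(\sqrt{-L})=e^{(t/2)L}\,S\,e^{(t/2)L}$ (check on each eigenspace: $e^{-(t/2)\lambda_k}\Psi(\lambda_k)e^{-(t/2)\lambda_k}=\Phi(\sqrt{\lambda_k})$).

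Next I would pass to kernels. Writing $p_s(x,y):=e^{sL}(x,y)$ for the (symmetric) heat kernel and using the above factorisation, the kernel of $\Phi(\sqrt{-L})$ is
\[
\Phi(\sqrt{-L})(x,y)=\big\langle S\,p_{t/2}(\cdot,y),\;p_{t/2}(x,\cdot)\big\rangle_{L^2(\mu)},
\]
so that by Cauchy--Schwarz $|\Phi(\sqrt{-L})(x,y)|\le e\|\Phi\|_\infty\,\|p_{t/2}(x,\cdot)\|_{2}\,\|p_{t/2}(\cdot,y)\|_{2}$. By the Chapman--Kolmogorov identity and symmetry of the kernel, $\|p_{t/2}(x,\cdot)\|_2^2=\int_{\bB^n}p_{t/2}(x,z)p_{t/2}(z,x)\,d\mu(z)=p_t(x,x)$, and evaluating the upper bound in \eqref{gauss-ball} at $y=x$ (so $\rho(x,x)=0$ and $\sqrt t=\tau^{-1}$) gives $p_t(x,x)\le c_3/V(x,\tau^{-1})$. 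Multiplying the two factors and doing likewise in $y$ yields \eqref{bound-ker} with $c=e c_3$.

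I do not expect a genuine obstacle here; the argument is routine once Theorem~\ref{thm:Gauss-ball} is available. The only points requiring a little care are the standard facts that $e^{tL}$ is an integral operator with a jointly measurable symmetric kernel obeying Chapman--Kolmogorov and that the kernel of the composition $e^{(t/2)L}Se^{(t/2)L}$ is given by the displayed $L^2(\mu)$‑pairing for $\mu$‑a.e.\ $(x,y)$; both are guaranteed in the presence of the Gaussian bounds and the doubling property (condition {\bf C2}; here $V(x,2r)\le c_0 V(x,r)$, which follows from \eqref{mes-ball}--\eqref{dist-ball}), and both are subsumed by \cite[Theorem~3.7]{CKP}, which is the simplest way to conclude.
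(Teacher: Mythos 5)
Your proposal is correct and matches the paper's route: the paper establishes \eqref{bound-ker} precisely by invoking \cite[Theorem 3.7]{CKP} on the strength of the Gaussian upper bound in Theorem~\ref{thm:Gauss-ball}, which is exactly what you do. Your supplementary sketch (factoring $\Phi(\sqrt{-L})=e^{(t/2)L}\Psi(-L)e^{(t/2)L}$ with $t=\tau^{-2}$, then Cauchy--Schwarz and the on-diagonal bound $e^{tL}(x,x)\le c_3/V(x,\tau^{-1})$) is the standard argument behind that citation and is sound here, all the more since the compact support of $\Phi$ and the discreteness of the spectrum make the kernel a finite sum $\sum_k\Phi(\sqrt{\lambda_k})\tilde P_k(x,y)$, so the kernel identities hold pointwise without any measurability caveats.
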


As is shown in \cite[Theorem 3.1]{GKPP} Propositions \ref{prop:fspp}-\ref{prop:bound-ker}
lead to the following localization result:

\begin{theorem}\label{thm:local}
If $\Phi\in C^m(\bR)$, $m\ge n+1$, is even and $\supp \Phi\subset [-R, R]$, $R>0$,
then the kernel $\Phi(\delta\sqrt{-L})(x, y)$ of the operator $\Phi(\delta\sqrt{-L})$ obeys
\begin{equation}\label{local-ker}
|\Phi(\delta\sqrt{-L})(x, y)|
\le \frac{c_m\big(1+\delta^{-1}\rho(x, y)\big)^{-m}}{\big[V(x, \delta)V(y, \delta)\big]^{1/2}},
\quad x, y\in \bB^n, \; \delta >0,
\end{equation}
where the constant $c_m>0$ depends only on $\|\Phi\|_\infty$, $\|\Phi^{(m)}\|_\infty$, $R$ and $m$.
\end{theorem}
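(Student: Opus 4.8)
The plan is to deduce this statement from the general localization machinery of \cite[Theorem~3.1]{GKPP}, whose three hypotheses are all available in the present setting: the doubling property of $\mu$ (which follows from the volume bounds behind Theorem~\ref{thm:Gauss-ball}, cf. \eqref{m-ball-0}), the finite speed propagation property \eqref{fspp} (equivalently Proposition~\ref{prop:fspp}), and the spectral size bound of Proposition~\ref{prop:bound-ker}. So the task reduces to recording these inputs and recalling why their conjunction yields the pointwise bound \eqref{local-ker}. Note first that, since $\supp\Phi\subset[-R,R]$ and $\lambda_k=k(k+n+2\gamma-1)\to\infty$, the operator $\Phi(\delta\sqrt{-L})$ is a \emph{finite} linear combination $\sum_{k:\,\lambda_k\le R^2/\delta^2}\Phi(\delta\sqrt{\lambda_k})\tilde P_k$ of eigenprojectors; thus its kernel is a well-defined polynomial in $(x,y)$ and no convergence subtlety arises for the main object.

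The mechanism is the standard Fourier-localization argument. Fix $x,y\in\bB^n$, put $\rho:=\rho(x,y)$, and assume first $\rho\ge 2c^\star\delta$; the complementary range $\rho\le 2c^\star\delta$ is immediate, since there $(1+\delta^{-1}\rho)^{-m}\sim 1$ and Proposition~\ref{prop:bound-ker} applied to $\lambda\mapsto\Phi(\delta\lambda)$ (supported in $[0,R/\delta]$) together with doubling gives $|\Phi(\delta\sqrt{-L})(x,y)|\lesssim[V(x,\delta/R)V(y,\delta/R)]^{-1/2}\lesssim[V(x,\delta)V(y,\delta)]^{-1/2}$. Since $\Phi$ is even, write $\Phi(\delta\sqrt{-L})=\frac{1}{2\pi}\int_\R\hat\Phi(u)\cos(u\delta\sqrt{-L})\,du$, choose a smooth even cutoff $\eta$ with $\eta\equiv1$ on $[-1,1]$ and $\supp\eta\subset[-2,2]$, set $A:=\rho/(2c^\star\delta)$, and split $\Phi=\Phi_\flat+\Phi_\sharp$ with $\widehat{\Phi_\flat}:=\hat\Phi\,\eta(\cdot/A)$. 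Then $\widehat{\Phi_\flat}$ is supported in $|u|\le 2A<\rho/(c^\star\delta)$, so finite speed propagation \eqref{fspp} forces $\Phi_\flat(\delta\sqrt{-L})(x,y)=0$, and only the high part $\Phi_\sharp$ contributes.

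To estimate $\Phi_\sharp(\delta\sqrt{-L})(x,y)$ one uses the Paley--Wiener bounds $|\hat\Phi(u)|\lesssim R\,\|\Phi^{(j)}\|_\infty\,(1+|u|)^{-j}$ for $0\le j\le m$, which control $\|\Phi_\sharp\|_\infty$ and the relevant norms of $\Phi_\sharp$ by negative powers of $A$ and, with the smoothness of $\widehat{\Phi_\sharp}$, show that $\Phi_\sharp$ decays rapidly beyond spectral scale $A$. Decomposing $\Phi_\sharp=\sum_{\ell\ge0}\Phi_\sharp\chi_\ell$ along a dyadic partition $\{\chi_\ell\}$ of $[0,\infty)$ in the spectral variable, applying Proposition~\ref{prop:bound-ker} to each piece $\Phi_\sharp\chi_\ell$ (supported in $[0,2^{\ell+1}]$, hence producing $V(\cdot,\delta 2^{-\ell})$ in the denominator), and summing over $\ell$ against the volume-doubling inequality, the rapid decay of $\Phi_\sharp$ overcomes the doubling loss and yields $|\Phi_\sharp(\delta\sqrt{-L})(x,y)|\lesssim(1+\delta^{-1}\rho)^{-m}[V(x,\delta)V(y,\delta)]^{-1/2}$, with the implied constant controlled by $\|\Phi\|_\infty$, $\|\Phi^{(m)}\|_\infty$, $R$ and $m$; this is \eqref{local-ker}. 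The delicate point --- and the main obstacle --- is exactly this bookkeeping: the decay gained from the $m$ derivatives of $\Phi$ must strictly beat the loss incurred when trading $V(\cdot,\delta 2^{-\ell})$ for $V(\cdot,\delta)$ via doubling, which is why the hypothesis $m\ge n+1$ is imposed; the full argument is carried out in \cite[Theorem~3.1]{GKPP}.
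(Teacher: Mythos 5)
Your proposal matches the paper's own treatment: the paper proves Theorem~\ref{thm:local} simply by invoking \cite[Theorem~3.1]{GKPP}, whose hypotheses are exactly the ones you record --- doubling of $\mu$, the finite speed propagation property (Proposition~\ref{prop:fspp}) and the spectrally localized kernel bound of Proposition~\ref{prop:bound-ker}, all consequences of the Gaussian bounds of Theorem~\ref{thm:Gauss-ball}. Your sketch of the internal Fourier-splitting mechanism is only heuristic (a sharp cutoff of $\hat\Phi$ at scale $A$ yields a loss of order $A^{-(m-1)}$ rather than $A^{-m}$, so one actually approximates $\Phi$ by band-limited functions, and the doubling exponent of $\mu$ is $n+2\max(\gamma,0)$ rather than $n$), but since you, like the paper, ultimately defer the detailed bookkeeping to the cited theorem, this does not affect the correctness of the argument.
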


Furthermore, using \cite[Theorem 3.6]{GKPP} the space localization in \eqref{local-ker}
can be improved to sub-exponential by selecting $\Phi\in C^\infty(\bR)$
with ``small derivatives", just as in \cite[Theorem 6.1]{IPX1}.

It should be pointed out in light of the development in \cite{CKP,GKPP} the Gaussian bounds for the heat kernel on $\bB^n$
are the basis for development of Besov and Triebel-Lizorkin spaces on $\bB^n$ and their frame characterization (see \cite{KPX2}),
in the spirit of the development of Frazier and Jawerth \cite{F-J1, F-J2, F-J-W} in the classical case on $\bR^n$.

An important point is that all these results are now valid in the full range of the weight parameter $\gamma > -1/2$ (see \eqref{mes-ball}),
while in \cite{PX2, KPX2} the parameter $\gamma$ is restricted to $\gamma \ge 0$.

\smallskip

In what follows we derive Theorem~\ref{thm:Gauss-ball} as a consequence of Theorem~\ref{thm:main}.

\subsection{Geometric characteristics in a natural chart}\label{subsec:geom-ball}

In the current setting the Riemannian manifold is
$M:=\bS^n:= \{y\in\R^{n+1}: \| y\|=1\}$,
the unit sphere in $\bR^{n+1}$,
equipped with the Riemannian metric induced by the inner product on $\bR^{n+1}$.
Denote
\begin{equation*}
V:= \bB^n
\quad\hbox{and}\quad
U:=\bS^n_+= \{y\in\R^{n+1}: \| y\|=1, y_{n+1}>0\}.
\end{equation*}
Clearly, $U=\bS^n_+$ as an open subset of the Riemannian manifold $\bS^n$.
We consider the natural chart $(\bS^n_+, \phi^{-1})$ on $\bS^n$, where the map $\phi: \bB^n \mapsto \bS^n_+$
is defined by
\begin{equation}\label{def-ph}
\phi(x_1, \dots, x_n)
:= \big(x_1, \dots, x_n, \sqrt{1- \| x\|^2}\big).
\end{equation}
In other terms
$$
y_1 = x_1, \dots, y_n =x_n, y_{n+1}= \sqrt{1- \| x\|^2}.
$$
Then
$\phi^{-1}(y_1, \dots, y_{n+1}) = (y_1, \dots, y_n)$.

We equip $\bS^n_+$ and $\bB^n$ with the following weighted measures
\begin{equation}\label{weights}
w(y)d\nu(y):=y_{n+1}^{2\gamma}d\nu(y) 
\quad\hbox{and\quad}
\ww(x)dx:=(1-\|x\|^2)^{\gamma-1/2}dx, 
\quad \gamma>-1/2,
\end{equation}
where $\nu$ is the Lebesgue measure on $\bS^n$.
Observe that $d\mu(x)=\ww(x)dx$ is just the measure from \eqref{mes-ball}.

We shall denote by $d(\cdot, \cdot)$ the geodesic distance on $\bS^n$
and by $\rho(\cdot, \cdot)$ the induced distance on $\bB^n$, that is,
$\rho(x,x_\star)=d(\phi(x), \phi(x_\star))$.
It is readily seen that $\rho(\cdot, \cdot)$  is given by \eqref{dist-ball}.
The balls on $\bS^n_+$ will be denoted by $B_Y(y,r)$, namely,
\begin{equation}\label{def-B-Y}
B_Y(y,r):=\{z\in \bS^n_+: d(y, z)<r\}.
\end{equation}

In what follows, just as in \eqref{def-tilde-f} we shall use the abbreviated notation
\begin{equation}\label{tilde-f}
\tilde f(x):= f\circ \phi (x)=f(\phi(x)), \quad x\in\bB^n,
\end{equation}
for a function $f$ defined on $\bS^n_+$.

As in \eqref{R-tensor} the metric tensor (induced by the inner product in $\R^{d+1}$) is given by the matrix
$
g(x)= (g_{ij}(x))
=\big(\big\langle \frac{\partial}{\partial x_i}, \frac{\partial}{\partial x_j} \big\rangle\big).
$
Clearly,
$$
\frac{\partial}{\partial x_i}
=\Big(\frac{\partial y_1}{\partial x_i},\dots, \frac{\partial y_{d+1}}{\partial x_i}\Big)
= \Big(0,\dots, 0, 1, 0, \dots, \frac{-x_i}{\sqrt{1- \| x\|^2}}\Big)
$$
and hence
\begin{equation}\label{g-ij}
g_{ij}(x) = \delta_{ij} + \frac {x_ix_j }{1-\| x\|^2},
\quad 1\le i,j\le n.
\end{equation}
From Proposition~\ref{prop:simpl-map} it follows that the matrix $(g^{ij}(x))$ with entries
\begin{equation}\label{g-ij-1}
g^{ij}(x) := \delta_{ij}-x_ix_j
\end{equation}
is the inverse of $g(x)$, i.e. $g^{-1}(x)=(g^{ij}(x))$.
Appealing again to Proposition~\ref{prop:simpl-map} we infer that
\begin{equation}\label{det-g}
\det g(x)= \frac{1}{1- \|x\|^2}.
\end{equation}

\bigskip

\noindent
{\bf Integration.} Using the above we have
\begin{equation}\label{integral}
\int_{\bS^n_+} f(y) d\nu(y)
= \int_{\bB^n} f(\phi(x))\sqrt{\det g(x)}dx
= \int_{\bB^n} \tilde f(x)\frac{1}{\sqrt{1- \|x\|^2}}dx
\end{equation}
and hence
$$
\int_{\bS^n_+}f(y)w(y)d\nu(y)
= \int_{\bB^n} \tilde f(x) \tilde w(x)\frac{1}{\sqrt{1-\|x\|^2}} dx
= \int_{\bB^n} \tilde f(x)(1-\|x\|^2)^{\gamma-1/2}dx.
$$
In particular,
\begin{align*}
\int_{\bS^n_+}w(y) d\nu(y)
&= \int_{\bB^n} (1- \|x\|^2)^{\gamma-1/2} dx
=|\bS^{n-1}| \int_0^1 (1-r^2)^{\gamma-1/2} r^{n-1}dr\\
&=\frac{|\bS^{d-1}|}2   \int_0^1 (1-v)^{\gamma-1/2} v^{n/2-1}dv
= 2^{-1}B\big(\gamma+1/2, n/2\big)|\bS^{n-1}|.
\end{align*}
Thus,
\begin{equation}\label{BB}
\int_{\bS^n_+} w(y) d\nu(y)
= \int_{\bS^n_+} y_{n+1}^{2\gamma} d\nu(y) = 2^{-1}B\big(\gamma+1/2, n/2\big)|\bS^{n-1}|,
\end{equation}
where $|\bS^{n-1}|=\frac{2 \pi^{n/2}}{\Gamma(\frac n2)}$
is the volume of the unit sphere $\bS^{n-1}$ in $\bR^n$.

\medskip

\noindent
{\bf Representation of \boldmath $\nabla f$ and the weighted Laplacian \boldmath $\Delta_w$ on $\bS^n_+$.}
As in \eqref{def:grad}-\eqref{inner-f-h} we have using \eqref{g-ij-1}
\begin{equation}\label{grad-f}
(\nabla f(y))^{i}
= \sum_{j=1}^n g^{ij}(x) \partial_i \tilde f(x)
= \partial_i \tilde f(x)-\sum_{j=1}^nx_ix_j\partial_j \tilde f(x)
\end{equation}
and
\begin{align}\label{inner-fh}
\langle \nabla f(y), \nabla h(y)\rangle_g
&= \sum_{i,j} g^{ij}(x)\partial_i \tilde f(x) \partial_j \tilde h(x)\notag\\
&=  \sum_{i}\partial_i \tilde f(x) \partial_i \tilde h(x)
-  \sum_{i,j} x_ix_j\partial_i \tilde f(x) \partial_j \tilde h(x).
\end{align}

Also, just as in \eqref{def:w-laplace}-\eqref{w-laplace}
the weighted Laplacian $\Delta_w$ on $\bS^n_+$ is defined by
$\Delta_w f :=\frac{1}{w} \diver (w\nabla f)$ and in local coordinates
\begin{align*}
\Delta_w f(y)
&= \frac {1}{\tilde w(x)\sqrt{\det g(x)}}\sum_{i=1}^n  \partial_i\Big[\sqrt{\det g(x)} \tilde w(x)
\sum_{j=1}^n g^{ij}(x) \partial_j\tilde f(x)\Big]\\
&= \sum_{i=1}^n \partial_i \log [\sqrt{\det g(x)}\tilde w(x)] \sum_{j=1}^n g^{ij}(x) \partial_j \tilde f(x)
+  \sum_{i=1}^n \partial_i \Big[\sum_{j=1}^n g^{ij}(x) \partial_j \tilde f(x)\Big]\\
&= -2(\gamma-1/2) \sum_{i=1}^n  \frac{ x_i}{1- \|x\|^2}
\sum_{j=1}^n (\delta_{ij}-x_ix_j) \partial_j \tilde f(x)\\
& \qquad\qquad\qquad + \sum_{i=1}^n \sum_{j=1}^n \partial_i [g^{ij}(x)\partial_j \tilde f(x)]
=: \cQ_1+\cQ_2,
\end{align*}
where we used that
$\sqrt{\det g(x)}\tilde w(x)= (1-\|x\|^2)^{\gamma-1/2}=\ww(x)$ as $w(y):= y_{n+1}^{2\gamma}$.
Straightforward manipulations show that
$$
\cQ_1=-2(\gamma-1/2)\sum_{j=1}^n x_j\partial_j \tilde f(x) 
$$
and
$$
\cQ_2=\sum_{i=1}^{n}\partial_i^2 \tilde f(x) - \sum_{i=1}^n \sum_{j=1}^n x_ix_j
\partial_i \partial_j \tilde f(x) - (n+1)\sum_{j=1}^{n}x_j\partial_j \tilde f(x). 
$$
Therefore, with the notation
$\tilde\Delta_w (\tilde{f})(x):=(\Delta_w f)(\phi(x))$
and $\tilde{f}(x):= f(\phi(x))$ (see (\ref{tilde-f}))
we have for $f \in C^\infty(\bS^n_+)$ (which is the same as $\tilde f \in C^\infty(\bB^n)$)
\begin{equation}\label{laplace-b}
\tilde\Delta_w \tilde{f}(x) = \sum_{i=1}^{n}\partial_i^2 \tilde f(x)
- \sum_{i=1}^n\sum_{j=1}^n x_ix_j  \partial_i \partial_j \tilde{f}(x)
- (2\gamma+n) \sum_{j=1}^{n}x_j\partial_j \tilde f(x)
= L\tilde{f}(x).
 \end{equation}

\subsection{Verification of conditions C0-C5 from \S\ref{subsubsec:setting} and completion of proof}

To apply Theorem~\ref{thm:main} we have to verify conditions {\bf C0-C5} from \S\ref{subsubsec:setting}
in the current setting on $\bB^n$.


By \eqref{laplace-b} it follows that condition {\bf C0} is obeyed.


Clearly, $U=\bS^n_+$ is an open and convex subset of $\bS^n$ due to the obvious fact that
the shortest geodesic line connecting any $y, y_\star\in \bS^n_+$ lies in $\bS^n_+$.
Therefore, condition {\bf C1} in \S\ref{subsubsec:setting} is also obeyed.


Condition {\bf C2} (The doubling property of the measure $d\mu$ on $\bB^n$ or of $wd\nu$ on $\bS^n_+$)
follows readily from the following well known result (see e.g. \cite[Lemma 11.3.6]{DaiXu}):
For any $z\in \bB^n$ and $0<r\le \pi$
\begin{equation}\label{doubling-B}
\int_{B(z,r)} (1-\|x\|^2)^{\gamma-1/2} dx   \sim  r^n (1-\|z\|^2+r^2)^{\gamma}
\end{equation}
or equivalently,
for any $u \in \bS^n_+$ and $0 < r \le \pi$
\begin{equation}\label{doubling-S}
\int_{B_Y(u,r)} y_{n+1}^{2\gamma} d\nu(y)   \sim  r^n (u_{n+1}+r)^{2\gamma}.
\end{equation}


We next verify condition {\bf C3}.
Observe that
if $e_{n+1}=(0,\dots,0, 1)\in \bS^n_+$
is the north pole, then denoting
$$
\theta(y) := d(y, \partial \bS^n_+) = \pi/2- d(y,e_{n+1})
\quad\hbox{for $y\in\bS^n_+$}
$$
we have $y_{n+1}=\sin \theta(y)$.
Assume $y\in\bS^n_+$ and $d(y, \partial \bS^n_+)\ge 2r$, where $0<r\le \pi/4$.
Then, apparently
$\theta(y)-r<\theta(z)<\theta(y)+r$
for $z\in B_Y(y, r)$
and hence
\begin{align*}
\frac 1\pi\theta(y)\le\frac 2\pi(\theta(y)-r)\le \sin\big(\theta(y)-r\big)
\le z_{n+1}=\sin \theta(z)\le \theta(y)+r\le 2\theta(y).
\end{align*}
This readily implies
\begin{equation}\label{C3-ball}
\sup_{z \in B_Y(y,r)}  z_{n+1}^{2\gamma} \le (2\pi)^{2|\gamma|}\inf_{z \in B_Y(y,r)} z_{n+1}^{2\gamma},
\end{equation}
which completes the verification of {\bf C3} on $\bS^n_+$.

\smallskip

Similarly as in \eqref{def-pol-Y} we define
\begin{equation*}
\cP_k(\bS^n_+) = \big\{f: f(y_1,\dots,y_{n+1})= P(y_1,\dots,y_n), \; P\in \tilde\cP_k(\bB^n)\big\}
\end{equation*}
and set $\cP(\bS^n_+):= \cup_{k\ge 0} \cP_k(\bS^n_+)$.

A critical step in this development is to establish the following {\bf Green's theorem},
that is the same as to verify condition {\bf C4} in \S \ref{subsubsec:setting}.

\begin{theorem}\label{thm:divergence}
If $f \in \cP(\bS^n_+)$ and $h \in C^\infty(\bS^n_+) \cap L^\infty(\bS^n_+)$
with $\int_{\bS^n_+} |\nabla h|_g^2 wd\nu<\infty$,
then
\begin{equation}\label{divergence}
\int_{\bS^n_+}   h\Delta_w f \,wd\nu
= -\int_{\bS^n_+} \langle \nabla f, \nabla h\rangle_g \,wd\nu.
\end{equation}
\end{theorem}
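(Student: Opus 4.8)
The plan is to apply Theorem~\ref{thm:diver} with a suitable exhausting family $V_\eps$ of $\bB^n$, reducing the desired Green's identity \eqref{divergence} to the vanishing of a boundary term. A natural choice is $V_\eps := \{x \in \bB^n : \|x\| < 1-\eps\}$, so that $\partial V_\eps$ is the sphere of radius $1-\eps$, on which the classical divergence theorem \eqref{class-diver} is certainly valid, and $V_\eps \subset \overline{V_\eps} \subset \bB^n$, $V_\eps \subset V_{\eps'}$ for $\eps' < \eps$, and $\cup_\eps V_\eps = \bB^n$. By Theorem~\ref{thm:diver}, identity \eqref{divergence} holds for all admissible $f \in \cP(\bS^n_+)$ and $h \in C^\infty(\bS^n_+) \cap L^\infty(\bS^n_+)$ with $\int |\nabla h|_g^2\, wd\nu < \infty$ if and only if
\begin{equation*}
\lim_{\eps \to 0} \int_{\partial V_\eps} \sum_{i,j} g^{ij}(x) n_\eps^i(x) \partial_j \tilde f(x) \tilde h(x) \ww(x)\, d\tau_\eps(x) = 0,
\end{equation*}
so the entire task is to estimate this surface integral.

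On $\partial V_\eps$ the outward unit normal is $n_\eps(x) = x/\|x\| = x/(1-\eps)$, and $\ww(x) = (1-\|x\|^2)^{\gamma - 1/2} = (2\eps - \eps^2)^{\gamma-1/2} \sim \eps^{\gamma-1/2}$. Using $g^{ij}(x) = \delta_{ij} - x_ix_j$ from \eqref{g-ij-1}, the contraction $\sum_i g^{ij}(x) n_\eps^i(x) = \sum_i (\delta_{ij} - x_ix_j) x_i/(1-\eps) = (x_j - x_j\|x\|^2)/(1-\eps) = x_j (1-\|x\|^2)/(1-\eps) = x_j(2\eps - \eps^2)/(1-\eps)$, which is $O(\eps)$ uniformly on $\partial V_\eps$. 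Hence the integrand is bounded by $C \eps \cdot |\nabla \tilde f(x)| \cdot |\tilde h(x)| \cdot \eps^{\gamma - 1/2}$; since $f$ is a polynomial, $|\nabla \tilde f|$ is bounded on $\bB^n$, and since $h \in L^\infty$, $|\tilde h|$ is bounded, while the surface area of $\partial V_\eps$ is $O(1)$. Therefore the surface integral is $O(\eps^{\gamma + 1/2})$, which tends to $0$ as $\eps \to 0$ precisely because $\gamma > -1/2$. This is where the full range of the parameter enters, and it is the key quantitative point: the factor $(1-\|x\|^2)$ from the metric contraction exactly compensates the singularity of the weight, gaining one extra power of $\eps$ over what the naive bound on $\ww$ alone would give.

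The main subtlety to handle carefully is that $h$ is only assumed bounded and $C^\infty$ on the open half-sphere, with a finite Dirichlet energy but no control on $\nabla h$ near the boundary; one must make sure the boundary term does not secretly involve $\nabla h$. It does not — the boundary term in Theorem~\ref{thm:diver} involves only $\tilde h$ itself (not its derivatives) paired against $\partial_j \tilde f$ — so the crude bound $|\tilde h| \le \|h\|_{L^\infty}$ suffices and no integration by parts transferring derivatives onto $h$ is needed. I would also remark that the hypothesis $\int_{\bS^n_+} |\nabla h|_g^2\, wd\nu < \infty$ is what makes $h$ an admissible test function for the statement of Theorem~\ref{thm:diver} in the first place, but it plays no role in the boundary estimate. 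Assembling these observations: the chosen $V_\eps$ satisfy all structural hypotheses of Theorem~\ref{thm:diver}, the boundary integrand is $O(\eps^{\gamma+1/2})$, the limit vanishes, and therefore \eqref{divergence} holds, which is exactly condition {\bf C4}.
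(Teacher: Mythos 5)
Your proposal is correct and follows essentially the same route as the paper: exhaust $\bB^n$ by concentric balls (the paper takes $\|x\|^2<1-\eps$ rather than $\|x\|<1-\eps$, an inessential difference), invoke Theorem~\ref{thm:diver}, compute $\sum_i g^{ij}(x)n_\eps^i(x)=\|x\|^{-1}x_j(1-\|x\|^2)$ so that the boundary integrand carries the factor $(1-\|x\|^2)^{\gamma+1/2}$, and bound it by $c\,\eps^{\gamma+1/2}\|h\|_\infty\sup_{\bB^n}\|\nabla\tilde f\|_\infty$, which tends to $0$ precisely because $\gamma>-1/2$. Your observations that the boundary term involves only $\tilde h$ (not $\nabla h$) and that the extra power of $\eps$ from the metric contraction is the key quantitative point match the paper's argument exactly.
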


\begin{proof}
We shall utilize Theorem~\ref{thm:diver} for this proof.

Denote $V_\eps:=\{x\in\R^n: \|x\|^2<1-\eps\}$. Then $\partial V_\eps=\{x\in\R^n: \|x\|^2=1-\eps\}$.
Clearly, $\vec{n}_\eps(x)=\frac{x}{\|x\|}$ is the unit outward normal to $\partial V_\eps$.
We denote by $\tau_\eps$ the Lebesgue measure on the sphere $\partial V_\eps$.
We assume $0<\eps<1/2$.
Appealing to Theorem~\ref{thm:diver} we know that to prove Theorem~\ref{thm:divergence}
we only have to show that
for any $f \in \cP(\bS^n_+)$ and $h \in C^\infty(\bS^n_+) \cap L^\infty(\bS^n_+)$
with $\int_{\bS^n_+} |\nabla h|_g^2 wd\nu<\infty$ we have
\begin{equation}\label{int-Veps2}
J_\eps:=\int_{ \partial V_\eps} \sum_i\sum_j g^{ij}(x)n_\eps^i(x) \partial_j\tilde{f}(x)\tilde{h}(x) \ww(x) d\tau_\eps(x)
\to 0\quad\hbox{as}\;\; \eps\to 0.
\end{equation}
We use \eqref{g-ij-1} and $\vec{n}_\eps(x)=x\|x\|^{-1}$ 
to obtain
\begin{align*}
\sum_i\sum_j g^{ij}(x)n_\eps^i(x) \partial_j\tilde{f}(x)
& = \|x\|^{-1} \sum_i\sum_j x_i(\delta_{ij}-x_ix_j)\partial_j\tilde{f}(x)
\\
& = \|x\|^{-1} \sum_i\Big(x_i\partial_i\tilde{f}(x)-x_i^2\sum_j x_j\partial_j\tilde{f}(x)\Big)
\\
&= \|x\|^{-1}(1-\|x\|^2)\sum_jx_j\partial_j\tilde{f}(x).
\end{align*}
Hence,
\begin{align*}
J_\eps=\int_{\partial V_\eps} \|x\|^{-1}(x\cdot \nabla \tilde f(x)) \tilde{h}(x)(1-\|x\|^2)^{\gamma+1/2} d\tau_\eps(x),
\end{align*}
where $\nabla$ is the standard gradient on $\bR^n$.
Note that $d\tau_\eps=(1-\eps)^{n/2}d\nu$.
Evidently, for any $x\in \partial V_\eps$, $0<\eps<1/2$,
\begin{equation}\label{Jeps-est}
\|x\|^{-1}|x\cdot \nabla \tilde f(x)||\tilde{h}(x)|(1-\|x\|^2)^{\gamma+1/2}
\le \eps^{\gamma+1/2}\|h\|_\infty\sup_{x\in\bB^n}\|\nabla \tilde f(x)\|_\infty.
\end{equation}
However, $\gamma>-1/2$
and $\sup_{x\in\bB^n}\|\nabla \tilde f(x)\|_\infty<\infty$
because $\tilde f$ is a polynomial.
From these and \eqref{Jeps-est} it follows that
$J_\eps\to 0$ as $\eps\to 0$.
\end{proof}

\begin{remark}\label{rem:green-ball}
As one can expect Theorem~\ref{thm:divergence} and \cite[Theorem 2.1]{KPX} are equivalent;
it can be shown that identity $\eqref{divergence}$ can be derived from $(2.3)$ in \cite{KPX}
and vise versa.
\end{remark}

\noindent
{\em Completion of the proof of Theorem~\ref{thm:Gauss-ball}.}
Observe that the current setting on the ball is covered by the setting described in \S\ref{subsubsec:setting}
and conditions {\bf C0-C5} in \S\ref{subsubsec:setting} are verified.
Therefore, Theorem~\ref{thm:Gauss-ball} follows by Theorem~\ref{thm:main}.

\section{Heat kernel on the simplex}\label{sec:simplex}

In this section we establish two-sided Gaussian bounds for the heat kernel generated by the operator
\begin{equation}\label{def-L-simpl}
L := \sum_{i=1}^n x_i\partial_i^2 - \sum_{i=1}^n\sum_{j=1}^n x_ix_j \partial_i\partial_j
+ \sum_{i=1}^n \big(\kappa_i + \tfrac12 - (|\kappa|+ \tfrac{n+1}{2}) x_i\big) \partial_i
\end{equation}
with $|\kappa|:=\kappa_1+\dots+\kappa_{n+1}$
on the simplex
\begin{equation*}
\bT^n:=\Big\{x \in \bR^n: x_1 > 0,\dots, x_n>0,\; |x| < 1 \Big\},
\quad |x|:= x_1+\cdots+x_n,
\end{equation*}
in $\bR^n$, $n\ge 1$, equipped with the measure
\begin{equation}\label{def-mes-simpl}
d\mu(x) = \prod_{i=1}^{n} x_{i}^{\kappa_i-1/2}(1-|x|)^{\kappa_{n+1}-1/2}dx,
\quad  \kappa_i >-1/2,
\end{equation}
and the distance
\begin{equation}\label{def-dist-simpl}
\rho(x,y) = \arccos \Big(\sum_{i=1}^n \sqrt{x_i y_i} + \sqrt{1-|x|}\sqrt{1-|y|}\Big).
\end{equation}

Similarly as before we shall use the notation:
\begin{equation}\label{B-simplex}
B(x, r):=\{y\in\bT^n: \rho(x, y)<r\} \quad\hbox{and}\quad V(x, r):=\mu(B(x, r)).
\end{equation}

Denote by $\tilde{\cP}_k=\tilde{\cP}_k(\bT^n)$ the set of all algebraic polynomials
of degree $\le k$ in $n$ variables restricted to $\bT^n$,
and let $\tilde{\cV}_k=\tilde{\cV}_k(\bT^n)$ be the orthogonal compliment of
$\tilde{\cP}_{k-1}$ to $\tilde{\cP}_k$ in $L^2(\bT^n, \mu)$, $k\ge 1$.
Set $\tilde{\cV}_0:=\tilde{\cP}_0$.
As is well known (e.g. \cite[\S2.3.3]{DX}) $\tilde{\cV}_k$, $k=0, 1, \dots$,
are eigenspaces of the operator $L$, namely,
\begin{equation}\label{sim-eigen-sp}
L\tilde P=-\lambda_k \tilde P,\quad \forall \tilde P\in\tilde{\cV}_k,
\;\;\hbox{where $\lambda_k:= k\big(k+|\kappa|+(n-1)/2\big)$, $\; k=0, 1, \dots$}.
\end{equation}
Let $\tilde P_{kj}$, $j=1, \dots, \dim \tilde{\cV}_k$, be a real orthonormal basis for $\tilde{\cV}_k$ in $L^2(\bT^n, \mu)$.
Denote $N_k:= \dim \tilde{\cV}_k = \binom{k+n-1}{k}$.
Then
\begin{equation}\label{sim-orth-proj}
\tilde P_k(x, y):=\sum_{j=1}^{N_k}\tilde P_{kj}(x)\tilde P_{kj}(y), \quad x,y\in \bT^n,
\end{equation}
is the kernel of the orthogonal projector onto $\tilde{\cV}_k$.
The heat kernel $e^{tL}(x,y)$, $t>0$, takes the form
\begin{equation}\label{simplex-HK}
e^{tL}(x,y)=\sum_{k=0}^\infty e^{-\lambda_k t}\tilde P_k(x, y).
\end{equation}

We consider the operator $L$ with domain $D(L):= \tilde{\cP}(\bT^n):=\cup_{k\ge 0}\tilde{\cP}_k(\bT^n)$
the set of all algebraic polynomials in $n$ variables, restriction to $\bT^n$.
Clearly, $D(L)$ is a dense subset of $L^2(\bT^n, \mu)$.


\begin{theorem}\label{thm:Gauss-simpl}
The operator $L$ from \eqref{def-L-simpl} in the setting described above
is essentially self-adjoint and $-L$ is positive in $L^2(\bT^n, \mu)$.
Moreover, $e^{tL}$, $t>0$, is an integral operator with kernel $e^{tL}(x,y)$ with Gaussian upper and lower bounds, that is,
there exist constants $c_1, c_2, c_3, c_4 >0$ such that for any $x,y \in \bT^n$ and $t>0$
\begin{equation}\label{gauss-simplex}
\frac{c_1\exp\{- \frac{\rho(x,y)^2}{c_2t}\}}{\big[V(x, \sqrt t)V(y, \sqrt t)\big]^{1/2}}
\le  e^{tL}(x,y)
\le \frac{c_3\exp\{- \frac{\rho(x,y)^2}{c_4t}\}}{\big[V(x, \sqrt t)V(y, \sqrt t)\big]^{1/2}}.
\end{equation}

\end{theorem}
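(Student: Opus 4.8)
The plan is to deduce Theorem~\ref{thm:Gauss-simpl} from the general Theorem~\ref{thm:main}, by the scheme used for the ball in \S\ref{sec:ball}: realize $L$ as a weighted Laplace--Beltrami operator on a geodesically convex open subset of a sphere, then verify {\bf C0--C5}. The ambient manifold is the $n$-sphere $\bS^n\subset\bR^{n+1}$ (compact, complete, without boundary, satisfying the doubling property and the Poincar\'e inequality); a harmless rescaling of the round metric (equivalently, taking the sphere of radius $2$), which multiplies the geodesic distance by a constant and hence only rescales $c_1,\dots,c_4$ at the end, makes {\bf C0} hold with the constant $1$. Call the rescaled manifold $M$. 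Put $V:=\bT^n$ and let $U\subset M$ be the open spherical simplex $\{u\in\bS^n:u_1>0,\dots,u_{n+1}>0\}$, with chart $\phi:\bT^n\to U$, $\phi(x)=(\sqrt{x_1},\dots,\sqrt{x_n},\sqrt{1-|x|})$ and $\phi^{-1}(u)=(u_1^2,\dots,u_n^2)$. Then $\overline{U}=\{u\in\bS^n:u_i\ge0\}$ is compact, $\mathring{\overline{U}}=U$, and $\overline{\bT^n}\setminus\bT^n$ is Lebesgue-null, so the topological requirements of \S\ref{subsubsec:setting} hold. By Proposition~\ref{prop:simpl-map}, in this chart $g^{ij}(x)=x_i\delta_{ij}-x_ix_j\in\tcP_2(\bT^n)$ and $\det g(x)=\prod_ix_i^{-1}(1-|x|)^{-1}$; choosing the weight $w(u):=\prod_{i=1}^nu_i^{2\kappa_i}\,u_{n+1}^{2\kappa_{n+1}}$ on $U$, a direct computation (exactly as in \S\ref{subsec:geom-ball}) shows that $\ww(x)=\tw(x)\sqrt{\det g(x)}$ is the simplex weight $\prod_ix_i^{\kappa_i-1/2}(1-|x|)^{\kappa_{n+1}-1/2}$ and that the weighted Laplacian in these coordinates equals $L$ of \eqref{def-L-simpl}. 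This is {\bf C0}.

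Conditions {\bf C1, C2, C3, C5} are checked as on the ball. {\bf C1}: for $u,v\in U$ one has $u\cdot v>0$, so the minimizing great-circle arc $\gamma(t)=\bigl(\sin((1-t)\theta)u+\sin(t\theta)v\bigr)/\sin\theta$, $\theta=d(u,v)$, has each coordinate a positive combination of $u_i>0$ and $v_i>0$, hence positive; thus $U$ is convex and, being open and relatively compact, an inner uniform domain by Theorem~\ref{thm:convex}. {\bf C2}: the known estimate $\mu(B(x,r))\sim r^n\prod_{i=1}^n(\sqrt{x_i}+r)^{2\kappa_i}(\sqrt{1-|x|}+r)^{2\kappa_{n+1}}$ for metric balls on the simplex (see \cite{DaiXu}) is doubling in $r$, since $(\sqrt{x_i}+2r)^{2\kappa_i}\le 2^{2|\kappa_i|}(\sqrt{x_i}+r)^{2\kappa_i}$ for every real $\kappa_i$. {\bf C3}: if $d(u,\partial U)\ge Nr$ then $u_i\ge\sin(Nr)$ for each $i$, so the coordinates change only by a bounded relative amount over $B_Y(u,r)$ once $N$ is large, whence $w(u')\sim w(u)$ there. {\bf C5}: the eigenspace decomposition \eqref{sim-eigen-sp} is the classical one for $\bT^n$; the $\lambda_k=k(k+|\kappa|+(n-1)/2)$ are strictly increasing because $|\kappa|+(n-1)/2>-1$, and $L(\tcP_k)=\tcP_k$ since an Euler-relation computation shows $L$ is triangular on $\tcP_k$ with nonzero diagonal entries $-\lambda_j$, $1\le j\le k$.

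The crux is {\bf C4} (Green's theorem), which I would obtain from Theorem~\ref{thm:diver}. Take $V_\eps:=\{x\in\bT^n:x_i>\eps\ (1\le i\le n),\ 1-|x|>\eps\}$: nested bounded convex polytopes with $\overline{V_\eps}\subset\bT^n$, $\bigcup_{\eps>0}V_\eps=\bT^n$, and Lipschitz boundary, so the classical divergence theorem \eqref{class-diver} applies on each. The boundary $\partial V_\eps$ is the union of the faces $\{x_\ell=\eps\}$ (outward normal $-e_\ell$), $1\le\ell\le n$, and $\{|x|=1-\eps\}$ (outward normal $n^{-1/2}\mathbf{1}$). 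On $\{x_\ell=\eps\}$, from $g^{\ell j}(x)=x_\ell(\delta_{\ell j}-x_j)$ we get $\sum_{i,j}g^{ij}(x)n^i_\eps\partial_j\tilde f=-x_\ell\sum_j(\delta_{\ell j}-x_j)\partial_j\tilde f$, which carries the factor $x_\ell=\eps$, while $\ww$ carries the factor $\eps^{\kappa_\ell-1/2}$; since $\partial_j\tilde f$ (a polynomial) and $\tilde h$ are bounded on $\overline{V_\eps}$ and $\int_{\{x_\ell=0\}}\prod_{k\ne\ell}x_k^{\kappa_k-1/2}(1-|x|)^{\kappa_{n+1}-1/2}\,d\tau<\infty$ (all exponents exceed $-1$), the contribution of this face to the integral in \eqref{diver-cond} is $O(\eps^{\kappa_\ell+1/2})\to0$. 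On $\{|x|=1-\eps\}$, similarly $\sum_{i,j}g^{ij}(x)n^i_\eps\partial_j\tilde f=c(1-|x|)\sum_jx_j\partial_j\tilde f$ carries $1-|x|=\eps$ and $\ww$ carries $\eps^{\kappa_{n+1}-1/2}$, so its contribution is $O(\eps^{\kappa_{n+1}+1/2})\to0$. Hence \eqref{diver-cond} holds and {\bf C4} follows. The decisive input is that $\kappa_i>-1/2$ for every $i=1,\dots,n+1$ — the exact analogue of $\gamma>-1/2$ for the ball in Theorem~\ref{thm:divergence}.

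With {\bf C0--C5} in hand, Theorem~\ref{thm:main} gives that $L$ is essentially self-adjoint, $-L$ is positive, and $e^{tL}$ is an integral operator whose kernel satisfies \eqref{gauss-main} with the $M$-distance in place of $\rho$; since that distance is a fixed multiple of $\rho$ and, by {\bf C2}, $V_X(x,c\sqrt t)\sim V(x,\sqrt t)$, the constants rescale and \eqref{gauss-simplex} follows. I expect the real work to be {\bf C4}: unlike the ball, the simplex has $n+1$ boundary faces meeting along lower-dimensional strata, so both the exhaustion $V_\eps$ and the vanishing of the boundary flux in \eqref{diver-cond} must be treated face by face, and the convergence of the lower-dimensional weighted (Dirichlet-type) integrals appearing there is precisely what both requires and is guaranteed by the full-range hypothesis $\kappa_i>-1/2$.
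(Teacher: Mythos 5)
Your proposal is correct and follows essentially the same route as the paper: the square-root chart $\phi(x)=(\sqrt{x_1},\dots,\sqrt{x_n},\sqrt{1-|x|})$ onto the spherical octant, the weight $\prod_i y_i^{2\kappa_i}$, verification of {\bf C0--C5} (with the boundary-flux argument for {\bf C4} over the faces of the shrunken simplices $V_\eps$, using $\kappa_i>-1/2$ exactly as in Theorem~\ref{thm:divergence-T}), and then Theorem~\ref{thm:main}; the paper absorbs the factor appearing in \eqref{laplacian-simpl} via $e^{t4L}=e^{4tL}$ rather than rescaling the sphere, which is equivalent to your normalization. One small slip: Proposition~\ref{prop:simpl-map} applies only to graph charts $\phi(x)=(x_1,\dots,x_n,\psi(x))$, so it does not cover this chart — the formulas for $g^{ij}$ and $\det g$ must be checked directly (the paper does so, proving \eqref{det-g-T} through a separate determinant lemma for diagonal-plus-rank-one matrices), though this is routine and does not affect the argument.
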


\begin{remark}\label{rem:simpl}
It would be useful to note that smooth functional calculus on the simplex can be developed using
the two-sided Gaussian bounds on the heat kernel from \eqref{gauss-simplex} and the general results from \cite{CKP, GKPP}.
All comments and results from \S\ref{subsec:fspp-ball} have their analogues for the simplex.
In particular, the finite speed propagation property and Proposition~\ref{prop:fspp} are valid on the simplex as well as
the analogues of the localization estimates from Theorem~\ref{thm:local} and \cite[Theorems 7.1-7.2]{IPX1} hold true.
We shall not elaborate on applications of estimates \eqref{gauss-simplex} any further.
\end{remark}

We shall obtain Theorem~\ref{thm:Gauss-simpl} as a consequence of Theorem~\ref{thm:main}.
We begin by introducing the relevant setting on the simplex.

\subsection{Geometric characteristics in a natural chart}\label{subsec:geom-simpl}

In this setting the Riemannian manifold is again
$M:=\bS^n:= \{y\in\R^{n+1}: \| y\|=1\}$,
the unit sphere in $\bR^{n+1}$,
equipped with the induced Riemannian metric.

There is a natural relationship between $\bT^n$ and the part $\bS^n_T$
of the unit sphere $\bS^n$ in $\R^{n+1}$ lying in the first octant, that is,
$$
\bS^n_T:= \{y\in\bS^n: y_i >0, i=1, \dots, n+1\}.
$$
We shall use the natural chart $(\bS^n_T, \phi^{-1})$ on $\bS^n$, where the map $\phi: \bT^n \mapsto \bS^n_T$
is defined by
\begin{equation}\label{def-ph-simpl}
\phi(x_1, \dots, x_n):=\big(\sqrt{x_1},\dots,\sqrt{x_n}, \sqrt{ 1-|x|}\big), \quad  |x| :=\sum_{i=1}^n x_i,
\end{equation}
or in other terms
$y_i = \sqrt{x_i}, \; i=1,\dots,n$, $y_{n+1}= \sqrt{1-|x|}$.

Then
$\phi^{-1}(y_1, \dots, y_{n+1}) = (y_1^2, \dots, y_n^2)$.

We equip $\bS^n_T$ with the weighted measure
\begin{equation}\label{weight-sim-1}
w(y)d\nu(y):=2^n\prod_{i=1}^{n+1} y_i^{2\kappa_i}d\nu(y),
\quad \kappa_i>-1/2,
\end{equation}
where $d\nu$ is the Lebesgue measure on $\bS^n$,
and $\bT^n$ with
\begin{equation}\label{weight-sim-2}
d\mu(x)=\ww(x)dx := (1-|x|)^{\kappa_{n+1} -\frac 12}\prod_{i=1}^n x_i^{\kappa_i-\frac 12}dx,
\quad \kappa_i>-1/2.
\end{equation}

We shall denote by $d(\cdot, \cdot)$ the geodesic distance on $\bS^n$
and by $\rho(\cdot, \cdot)$ the induced distance on $\bT^n$,
i.e. $\rho(x, x_\star)=d(\phi(x), \phi(x_\star))$.
It is readily seen that $\rho(\cdot, \cdot)$  is given by \eqref{def-dist-simpl}.

As before, for a function $f$ defined on $\bS^n_T$, we shall use the abbreviated notation
\begin{equation}\label{tilde-f-simpl}
\tilde f(x):= f\circ \phi (x)=f(\phi(x)), \quad x\in\bT^n.
\end{equation}

As in \eqref{R-tensor} the metric tensor $g(x)= (g_{ij}(x))$ is given by
$
g_{ij}(x)=\big\langle \frac{\partial}{\partial x_i}, \frac{\partial}{\partial x_j} \big\rangle.
$
Evidently,
$$
\frac{\partial}{\partial x_i}
=\Big(\frac{\partial y_1}{\partial x_i},\dots, \frac{\partial y_{n+1}}{\partial x_i}\Big)
= \Big(0,\dots,0, \frac{1}{2\sqrt{x_i}},0, \dots,0, \frac{1}{2\sqrt{1- |x|}}\Big)
$$
and hence
\begin{equation}\label{gij-T}
g_{ij}(x) = \frac{\delta_{ij}}{4x_i}+\frac{1}{4(1-|x|)}
= \frac{1}{4(1-|x|)}\Big(\frac{\delta_{ij} (1-|x|)}{x_i} + 1\Big).
\end{equation}
A direct verification shows that the matrix with entries
\begin{equation}\label{gij1-T}
g^{ij}(x):= 4(\delta_{ij} x_i- x_ix_j)
\end{equation}
is the inverse to $g(x)$, i.e. $g^{-1}(x)=(g^{ij}(x))$.
We claim that
\begin{equation}\label{det-g-T}
\det g(x) = \frac{4^{-n }}{1-|x|}\prod_{i=1}^n \frac{1}{x_i}.
\end{equation}
This identity follows readily by the following lemma.


\begin{lemma}
Given $(a)=(a_1,\dots,a_n) \in \bR^n$, $n\ge 2$, let
$$
A
:=\left(\begin{array}{cccc}
a_1+1  & 1  & \cdots   & 1  \\
1 & a_2+1  & \cdots  & 1  \\
\vdots & \vdots & \cdots &\vdots\\
1 & 1  & \cdots  & a_n+1
\end{array}\right).
$$
Then
\begin{equation}\label{det-A}
\det A = \prod_{i=1}^n a_i + \sum_{j=1}^n  \prod_{k=1, k\neq j}^n a_k.
\end{equation}
\end{lemma}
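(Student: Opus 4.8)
The plan is to recognize $A$ as a rank-one update of a diagonal matrix, apply the matrix determinant lemma, and then drop the genericity hypothesis by a polynomial-identity argument.

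First I would write $A = D + \mathbf{1}\mathbf{1}^T$, where $D := {\rm diag}(a_1,\dots,a_n)$ is the diagonal matrix with entries $a_i$ and $\mathbf{1} := (1,\dots,1)^T \in \R^n$; this is immediate from the fact that the $(i,j)$ entry of $D + \mathbf{1}\mathbf{1}^T$ equals $a_i\delta_{ij}+1$, which is $a_i+1$ when $i=j$ and $1$ otherwise.

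Next, assuming temporarily that $a_i\neq 0$ for all $i$ so that $D$ is invertible, I would evaluate the $(n+1)\times(n+1)$ determinant
$$
\det\begin{pmatrix} D & -\mathbf{1}\\ \mathbf{1}^T & 1\end{pmatrix}
$$
in two ways by Schur complements: taking the Schur complement of the scalar entry $1$ produces $\det(D+\mathbf{1}\mathbf{1}^T)$, while taking the Schur complement of $D$ produces $\det D\,(1+\mathbf{1}^T D^{-1}\mathbf{1})$ — the same rank-one computation underlying the inversion of ${\rm Id}+\alpha P$ in the proof of Proposition~\ref{prop:simpl-map}. Equating the two expressions and using $\det D=\prod_i a_i$ and $\mathbf{1}^T D^{-1}\mathbf{1}=\sum_j a_j^{-1}$ yields
$$
\det A=\Big(\prod_{i=1}^n a_i\Big)\Big(1+\sum_{j=1}^n\frac{1}{a_j}\Big)=\prod_{i=1}^n a_i+\sum_{j=1}^n\prod_{k=1,\,k\neq j}^n a_k,
$$
which is \eqref{det-A}.

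Finally I would remove the restriction $a_i\neq 0$: both sides of \eqref{det-A} are polynomial functions of $(a_1,\dots,a_n)\in\R^n$ (the left side is a polynomial in the entries of $A$, the right side manifestly so), and they coincide on the set where all $a_i\neq 0$, which is dense in $\R^n$; hence they coincide identically. The only point requiring a word of care is precisely this degenerate case, where $D$ is singular and the matrix determinant lemma does not apply directly — the density argument disposes of it (alternatively, one can prove \eqref{det-A} for all $(a)$ at once by induction on $n$, subtracting the last row from each of the others and reducing to triangular form). Applying \eqref{det-A} with $a_i=(1-|x|)/x_i$ and combining with \eqref{gij-T} then gives \eqref{det-g-T}.
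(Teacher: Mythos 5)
Your proof is correct, but it follows a genuinely different route from the paper's. You treat $A$ as the rank-one update $D+\mathbf{1}\mathbf{1}^T$ of the diagonal matrix $D=\mathrm{diag}(a_1,\dots,a_n)$ and invoke the matrix determinant lemma (via two Schur-complement evaluations of a bordered $(n+1)\times(n+1)$ determinant), which requires the temporary genericity assumption $a_i\neq 0$ and a final density/polynomial-identity step to cover the degenerate cases; your observation that this is the same rank-one mechanism as the inversion of ${\rm Id}+\alpha P$ in Proposition~\ref{prop:simpl-map} is apt, and the Schur computation and the closing density argument are both carried out correctly. The paper instead argues by multilinearity of the determinant in its columns: writing $A=(a_1e_1+\ONE,\dots,a_ne_n+\ONE)$, it splits one column at a time, uses column subtraction to kill the terms with two or more columns equal to $\ONE$, and arrives directly at $\det A=\det(a_1e_1,\dots,a_ne_n)+\sum_j\det(a_1e_1,\dots,\ONE,\dots,a_ne_n)$, which evaluates to \eqref{det-A} with no invertibility hypothesis and no limiting argument. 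What your approach buys is a one-line conceptual identity (and it generalizes immediately to $D+uv^T$); what the paper's buys is a fully elementary argument valid for all $(a_1,\dots,a_n)$ at once, essentially the induction you sketch parenthetically as your alternative.
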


\noindent
{\bf Proof.}
Let $e_j$ be the $j$th coordinate vector (column) in $\R^n$, $1\le j\le n$,
and set $\ONE^T:=(1, 1, \dots, 1)$, $\ONE\in \R^n$.
Then we have $A=(a_1e_1+\ONE, a_2e_2+\ONE, \dots, a_ne_n+\ONE)$.
By splitting the first column of $A$ into two we can write
$$
\det A= \det(a_1e_1, a_2e_2+\ONE, \dots, a_ne_n+\ONE)
+ \det(\ONE, a_2e_2+\ONE, \dots, a_ne_n+\ONE).
$$
In the second determinant we subtract the first column from all other columns to obtain
$$
\det(\ONE, a_2e_2+\ONE, \dots, a_ne_n+\ONE) = \det(\ONE, a_2e_2, \dots, a_ne_n).
$$
Precisely in the same way we get
\begin{align*}
\det(a_1e_1, a_2e_2+\ONE, \dots, a_ne_n+\ONE)
&= \det(a_1e_1, a_2e_2, a_3e_3+\ONE,\dots, a_ne_n+\ONE)\\
&+ \det(a_1e_1, \ONE, a_3e_3,\dots, a_ne_n).
\end{align*}
Inductively we obtain
\begin{align*}
\det A &= \det(a_1e_1,\dots, a_ne_n)
+ \sum_{j=1}^n \det(a_1e_1,\dots, a_{j-1}e_{j-1}, \ONE,a_{j+1}e_{j+1},\dots, a_ne_n).
\end{align*}
Obviously $\det(a_1e_1,\dots, a_ne_n)= a_1\dots a_n$ and it is easy to see that
$$
(a_1e_1,\dots, a_{j-1}e_{j-1}, \ONE,a_{j+1}e_{j+1},\dots, a_ne_n)
= \prod_{k=1, k\ne j}^n a_k.
$$
Putting the above together we arrive at (\ref{det-A}).
$\qed$

\medskip

\noindent
{\bf The gradient $\nabla$ and weighted Laplacian $\Delta_w$ on $\bS^n_T$.}
Using the chart $(\bS^n_T, \phi^{-1})$ and \eqref{gij1-T} we obtain for $y=\phi(x)$, $x\in \bT^n$,
$$
(\nabla f (y))^{i}= \sum_jg^{ij}(x) \partial _j f(\phi(x))
= 4x_i\Big[\partial _i\tilde{f}(x) - \sum_{j=1}^n x_j\partial_j\tilde{f}(x)\Big].
$$
Also, we have
\begin{align}\label{grad-grad-T}
\langle \nabla f(y), \nabla  h(y)\rangle_g
&=\sum_{i,j}g^{ij}(x) \partial_i \tilde f(x) \partial_j \tilde h(x)\notag
\\
& = 4  \sum_{i,j}(\delta_{ij} x_i -x_ix_j) \partial_i \tilde f(x) \partial_j \tilde h(x)
\\
& = 4 \Big[ \sum_{i} x_i \partial_i \tilde f(x) \partial_i   \tilde h(x)
-  \sum_{i,j}x_ix_j\partial_i\tilde f(x)\partial_j\tilde h(x)\Big].\notag
\end{align}
As in \eqref{def:w-laplace} the weighted Laplacian $\Delta_w$ is defined by
$\Delta_w f :=\frac{1}{w} \diver (w\nabla f)$
and we set
$\tilde\Delta_w \tilde f:=\Delta_w f (\phi(x))$.
Just as in \eqref{w-laplace} we get
\begin{align*}
\tilde\Delta_w \tilde f(x)
& = \frac{1}{\tilde w(x) \sqrt{\det g(x)}}
\sum_{i=1}^n  \partial_i(\sqrt{\det g(x)}\tilde w(x)
\sum_{j=1}^n g^{ij}(x) \partial_j\tilde f(x)\\
& = \sum_{i=1}^n \partial_i \log \big[\tilde w(x) \sqrt{\det g(x)}\big]
\sum_{j=1}^n g^{ij}(x) \partial_j\tilde f(x)
+  \sum_{i=1}^n \partial_i\Big[\sum_{j=1}^n g^{ij}(x)\partial_j\tilde f(x)\Big]\\
& = \sum_{i=1}^n \Big[\frac{\kappa_i -1/2}{x_i}-\frac {\kappa_{n+1} -1/2}{1-|x|}\Big]
\sum_{j=1}^n g^{ij}(x) \partial_j\tilde f(x)\\
&\qquad + \sum_{i=1}^n
\sum_{j=1}^n \partial_i [g^{ij}(x)]\partial_j\tilde f(x)
+ \sum_{i=1}^n  \sum_{j=1}^n g^{ij}(x)\partial_i \partial_j\tilde f(x)
=:\cQ_1+\cQ_2+\cQ_3.
\end{align*}
Now, using \eqref{gij1-T} we get
\begin{equation}\label{Q1}
\sum_{j=1}^n g^{ij}(x) \partial_j\tilde f(x)
= 4\sum_{j=1 }^n [\delta_{ij}x_i-x_ix_j]\partial_j\tilde f(x)
= 4x_i\Big[\partial_i\tilde f(x)-\sum_{j=1}^n x_j\partial_j\tilde f(x)\Big]
\end{equation}
and hence
\begin{align*}
\frac{1}{4}\cQ_1
&= \sum_{i=1}^n \Big(\frac{\kappa_i -1/2}{x_i}-\frac {\kappa_{n+1} -1/2}{1-|x|}\Big)
x_i\Big[\partial_i\tilde f(x)-\sum_{j=1}^n x_j\partial_j\tilde f(x)\Big]
\\
& = \sum_{i=1}^n (\kappa_i -1/2)\partial_i\tilde f(x)
-\sum_{j=1}^n x_j\partial_j\tilde f(x)\Big(\sum_{i=1}^n \kappa_i -n/2\Big)
\\
&\qquad\qquad\qquad\qquad\qquad
- (\kappa_{n+1} -1/2)\sum_{j=1}^n x_j\partial_j\tilde f(x)
\\
& = \sum_{i=1}^n (\kappa_i -1/2)\partial_i\tilde f(x)
-\big[|\kappa|-(n+1)/2\big]\sum_{j=1}^n x_j\partial_j\tilde f(x).
\end{align*}
Recall that
$|\kappa|:= \kappa_1+\cdots+\kappa_{n+1}$.
By (\ref{gij1-T}) we have
\begin{align*}
\frac{1}{4}\cQ_2
&=-\sum_{i=1}^n
\sum_{j=1, j\ne i}^n x_j\partial_j\tilde f(x)+\sum_{i=1}^n(1-2x_i)\partial_i \tilde f(x)
\\
&= \sum_{i=1}^n\partial_i \tilde f(x)-(n+1)\sum_{j=1}^n x_j\partial_j\tilde f(x)
\end{align*}
and
\begin{align*}
\frac{1}{4}\cQ_3
&=\sum_{i=1}^n  \sum_{j=1}^n (\delta_{ij}x_i-x_ix_j)\partial_i \partial_j\tilde f(x)
=\sum_{i=1}^n x_i\partial_i^2\tilde f(x)
- \sum_{i=1}^n  \sum_{j=1}^n x_ix_j\partial_i \partial_j\tilde f(x).
\end{align*}
Combining the above expressions for $\cQ_1$, $\cQ_2$, and $\cQ_3$ we obtain that
for any function $f\in C^\infty(\bS^n_T)$
\begin{align*}
\frac{1}{4}\tilde\Delta_w \tilde f(x)
&=\sum_{i=1}^n x_i\partial_i^2\tilde f(x)
- \sum_{i=1}^n  \sum_{j=1}^n x_ix_j\partial_i \partial_j\tilde f(x)\\
&+  \sum_{i=1}^n (\kappa_i +1/2)\partial_i\tilde f(x)
-\big[|\kappa|+(n+1)/2\big]\sum_{j=1}^n x_j\partial_j\tilde f(x).
\end{align*}
Hence,
\begin{equation}\label{laplacian-simpl}
\tilde\Delta_w \tilde f(x) = 4L\tilde{f}(x), \quad\forall x\in\bT^n.
\end{equation}

\smallskip

\noindent
{\bf Integration.} Using the chart $(\bS^n_T, \phi^{-1})$ and \eqref{det-g-T} we obtain
$$
\int_{\bS^n_T} f(y) d\nu(y)
= \int_{\bT_n} f(\phi(x)) \sqrt{\det g(x)}dx
= 2^{-n} \int_{\bT^n} \tilde f(x) \prod_{i=1}^n x_i^{ -1/2 } (1-|x|)^{ -1/2 }dx
$$
and hence
\begin{equation}\label{weight-int}
\int_{\bS^n_T}  f(y)  w(y) d\nu(y)
= \int_{\bT^n} \tilde f(x) \prod_{i=1}^n x_i^{\kappa_i-1/2} (1-|x|)^{\kappa_{n+1}-1/2}dx
= \int_{\bT^n} \tilde f(x)\ww(x)dx.
\end{equation}
For $\kappa_i >-1/2$, $j=1,\dots, n+1$, a little calculus gives
\begin{equation}\label{weight-measure-2}
\int_{\bT^n}\prod_{i=1}^n x_i^{\kappa_i-1/2} (1-|x|)^{\kappa_{n+1}-1/2}dx
=\prod_{i=1}^n B\Big(\kappa_i+\frac 12, \sum_{j=i+1}^{n+1}\big(\kappa_j+\frac 12\big)\Big),
\end{equation}
and using \eqref{weight-int} we get
\begin{equation}\label{weight-measure}
2^n\int_{\bS^n_T} \prod_{i=1}^{n+1} y_i^{2\kappa_i} d \nu(y)
= \prod_{i=1}^n B\Big(\kappa_i+\frac 12, \sum_{j=i+1}^{n+1}\big(\kappa_j+\frac 12\big)\Big) <\infty.
\end{equation}
Above $B(\cdot, \cdot)$ stands for the standard beta function.

\subsection{Verification of conditions C0-C5 from \S\ref{subsubsec:setting} and completion of proof}

The proof of Theorem~\ref{thm:Gauss-simpl} relies on Theorem~\ref{thm:main}, which requires
the verification of conditions {\bf C0-C5} from \S\ref{subsubsec:setting}.

From \eqref{laplacian-simpl} it follows that the condition {\bf C0} from \S\ref{subsubsec:setting}
is satisfied for the operator $4L$.
Here the factor $4$ is insignificant because apparently $e^{t4L}=e^{4tL}$
and if Theorem~\ref{thm:Gauss-simpl} holds for the operator $4L$ it holds for $L$.

Clearly, $\bS^n_T$ is an open and convex subset of $\bS^n$ as
the shortest geodesic connecting any $y, y'\in \bS^n_T$ lies in $\bS^n_T$.
Hence condition {\bf C1} is obeyed.


The doubling property of the measure $wd\nu$ is well known,
i.e. condition {\bf C2} is obeyed.
In fact, this is an immediate consequence of the following claim (see e.g. \cite[(5.1.10)]{DaiXu}):
For any $u \in \bS^n_T$ and $0 < r \le \pi/2$
\begin{equation}\label{doubling-T}
\int_{B_Y(u,r)} w(y) d\nu(y) =
\int_{B_Y(u,r)} 2^n\prod_{i=1}^{n+1}y_i^{2\kappa_i} d\nu(y)
\sim  r^n \prod_{i=1}^{n+1}(u_i+r)^{2\kappa_i}
\end{equation}
or equivalently, for any $z\in\bT^n$ and $0<r\le 1$
\begin{equation}\label{doubling-T2}
\int_{B(z,r)}\prod_{i=1}^{n} x_{i}^{\kappa_i-1/2}(1-|x|)^{\kappa_{n+1}-1/2}dx
\sim  r^n (1-|z|+r^2)^{\kappa_{n+1}}\prod_{i=1}^{n}(z_i+r^2)^{\kappa_i}.
\end{equation}


To verify {\bf C3} we need introduce some notation.
The boundary $\partial \bS^n_T$ of $\bS^n_T$ can be represented as
$\partial \bS^n_T=\cup_{i=1}^{n+1} \Gamma_i$, where
\begin{equation}\label{def-Gamma-i}
\Gamma_i:=\{y\in \overline{\bS^n_T}: y_i=0\}.
\end{equation}
Further, for $y\in \bS^n_T$ denote
\begin{equation}\label{def-theta-i1}
\theta_i(y) := \pi/2 -d(y, e_i), \quad i=1,\dots, n+1,
\end{equation}
where $e_i$ is the $i$th coordinate vector in $\R^{n+1}$.
Clearly, $\theta_i(y) = d(y, \Gamma_i)$ and hence
\begin{equation}\label{inf-theta-i}
\inf_{1\leq i \le n+1} \theta_i(y) = d(y, \partial \bS^n_T).
\end{equation}
Note that $y_i=\sin \theta_i(y)$, which implies $y_i \sim \theta_i(y)$.

Assume $y\in\bS^n_T$ and $d(y, \partial \bS^n_T)>2r$ with $0<r\le \pi/4$.
Then from (\ref{inf-theta-i}) it follows that $\theta_i(y) \ge 2r$ for $i=1, \dots, n+1$.
Now, just as in the proof of \eqref{C3-ball} we obtain
$$
\sup_{z\in B_Y(y,r)} z^{2\kappa_i} \le (2\pi)^{4|\kappa_i|}\inf_{z\in B_Y(y,r)} z^{2\kappa_i}
$$
and hence
$$
\sup_{z \in B_Y(y,r)}\prod_{i=1}^{n+1}z_i^{2\kappa_i}
\le  c\inf_{z \in B_Y(y,r)}\prod_{i=1}^{n+1}z_i^{2\kappa_i},
$$
which confirms condition {\bf C3} on $\bS^n_T$.

\smallskip

Recall that $\tilde{\cP}_k(\bT^n)$ is the set of all polynomials of degree
$\le k, $ on $\bR^n,$ restricted to $\bT^n$,
and $\tilde{\cP}(\bT^n) = \cup_{k\ge 0} \tilde{\cP}_k(\bT^n)$.
Let $\cP_k(\bS^n_T)$ and  $\cP(\bS^n_T)$ be the respective spaces on $\bS^n_T$, i.e.
\begin{equation*}
\cP_k(\bS^n_T) := \big\{f: f(y_1, \dots, y_{n+1})=P(y_1^2,\dots,y^2_n ), \; P\in  \tilde{\cP}_k(\bT_n) \big\}
\end{equation*}
and $\cP(\bS^n_T):=\cup_{k\ge 0}\cP_k(\bS^n_T)$.

\smallskip

The following {\bf Green's theorem} plays a critical role here.

\begin{theorem}\label{thm:divergence-T}
If $ f \in \cP (\bS^n_T)$ and $h \in C^\infty(\bS^n_T) \cap L^\infty(\bS^n_T)$ with $\int_{\bS^n_T}|\nabla h|_g^2w d\nu<\infty$,
then
\begin{equation}\label{divergence-T}
\int_{\bS^n_T} h\Delta_w f w d\nu
= - \int_{\bS^n_T} \langle \nabla f, \nabla h\rangle_g w d\nu.
\end{equation}
\end{theorem}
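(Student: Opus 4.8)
The plan is to derive Theorem~\ref{thm:divergence-T} from the general criterion of Theorem~\ref{thm:diver}, in complete analogy with the proof of Theorem~\ref{thm:divergence} for the ball. All hypotheses of Theorem~\ref{thm:diver} save \textbf{C4} have already been checked above in the present setting on $\bS^n_T$, so what remains is to choose an admissible exhaustion $\{V_\eps\}$ of $V=\bT^n$ and to verify the boundary condition \eqref{diver-cond}. I would use the shrunken simplices
\[
V_\eps:=\Big\{x\in\bT^n:\ x_i>\eps\ \ (i=1,\dots,n)\ \text{ and }\ |x|<1-\eps\Big\},\qquad 0<\eps<\tfrac{1}{n+1}.
\]
Each $V_\eps$ is a bounded convex polytope with $\overline{V_\eps}\subset\bT^n$, the family increases to $\bT^n$ as $\eps\downarrow0$, and the classical divergence theorem \eqref{class-diver} holds on a convex polytope; thus the structural hypotheses of Theorem~\ref{thm:diver} are in force, and it suffices to show that
\[
J_\eps:=\int_{\partial V_\eps}\sum_{i,j}g^{ij}(x)\,n_\eps^i(x)\,\partial_j\tilde f(x)\,\tilde h(x)\,\ww(x)\,d\tau_\eps(x)\ \longrightarrow\ 0\qquad(\eps\to0)
\]
for all admissible $f,h$, with $g^{ij}$ as in \eqref{gij1-T} and $\ww(x)=\prod_{i=1}^n x_i^{\kappa_i-1/2}(1-|x|)^{\kappa_{n+1}-1/2}$.

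Next I would split $\partial V_\eps$ into its faces. Discarding the lower-dimensional corners (which carry no $\tau_\eps$-mass), $\partial V_\eps$ is the union of the faces $F_i^\eps=\overline{V_\eps}\cap\{x_i=\eps\}$ with outward unit normal $-e_i$ ($1\le i\le n$), and the face $F_{n+1}^\eps=\overline{V_\eps}\cap\{|x|=1-\eps\}$ with outward unit normal $\tfrac{1}{\sqrt n}(1,\dots,1)$. On $F_i^\eps$ one computes, using \eqref{gij1-T} and \eqref{Q1},
\[
\sum_{k,j}g^{kj}(x)\,n_\eps^k(x)\,\partial_j\tilde f(x)=-\sum_j g^{ij}(x)\partial_j\tilde f(x)=-4x_i\Big(\partial_i\tilde f(x)-\sum_j x_j\partial_j\tilde f(x)\Big)=-4\eps\Big(\partial_i\tilde f(x)-\sum_j x_j\partial_j\tilde f(x)\Big),
\]
while on $F_{n+1}^\eps$, since $\sum_k g^{kj}(x)=4x_j(1-|x|)$ by \eqref{gij1-T},
\[
\sum_{k,j}g^{kj}(x)\,n_\eps^k(x)\,\partial_j\tilde f(x)=\frac{4(1-|x|)}{\sqrt n}\sum_j x_j\partial_j\tilde f(x)=\frac{4\eps}{\sqrt n}\sum_j x_j\partial_j\tilde f(x).
\]
Since $\tilde f$ is a polynomial, both expressions are $O(\eps)$ uniformly on $\overline{\bT^n}$. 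On $F_i^\eps$ the weight factorizes as $\ww(x)=\eps^{\kappa_i-1/2}\prod_{l\ne i,\,l\le n}x_l^{\kappa_l-1/2}(1-|x|)^{\kappa_{n+1}-1/2}$, and on $F_{n+1}^\eps$ as $\ww(x)=\eps^{\kappa_{n+1}-1/2}\prod_{l\le n}x_l^{\kappa_l-1/2}$. Combining these with $\|\tilde h\|_\infty<\infty$ yields $|J_\eps|\le C\|\tilde h\|_\infty\sum_{j=1}^{n+1}\eps^{\kappa_j+1/2}I_j(\eps)$, where $I_j(\eps)$ is the surface integral over $F_j^\eps$ of the residual product of powers appearing above.

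The crux of the argument — and the place where the full parameter range $\kappa_j>-1/2$ is used — is the claim that each $I_j(\eps)$ stays bounded as $\eps\to0$. Parametrizing $F_i^\eps$ by $(x_l)_{l\ne i}$, which ranges over $\{x_l>\eps,\ \sum_{l\ne i}x_l<1-2\eps\}$ with $1-|x|=1-\eps-\sum_{l\ne i}x_l$, the elementary inequality $1-\eps-\sum_{l\ne i}x_l\ge\tfrac12\big(1-\sum_{l\ne i}x_l\big)$ — valid precisely because $\sum_{l\ne i}x_l<1-2\eps$ — together with an obvious case split according to the sign of $\kappa_{n+1}-1/2$, dominates the integrand by a constant multiple of $\prod_{l\ne i}x_l^{\kappa_l-1/2}(1-\sum_{l\ne i}x_l)^{\kappa_{n+1}-1/2}$, whose integral over the standard $(n-1)$-simplex is a convergent Dirichlet integral since every exponent exceeds $-1$; the bound for $I_{n+1}(\eps)$ is obtained in exactly the same way. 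Hence $|J_\eps|=O\big(\eps^{1/2+\min_j\kappa_j}\big)\to0$, which is \eqref{diver-cond}, and Theorem~\ref{thm:diver} then delivers \eqref{divergence-T}. The only structural difference from the ball case is the presence of $n+1$ boundary faces rather than one; I expect the bookkeeping around the uniform-in-$\eps$ integrability of the singular weight on these faces to be the one genuinely technical point, everything else being routine.
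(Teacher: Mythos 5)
Your proposal is correct and follows essentially the same route as the paper's proof: the same exhaustion $V_\eps$ of $\bT^n$, the same decomposition of $\partial V_\eps$ into the faces $\{x_i=\eps\}$ and $\{|x|=1-\eps\}$ with normals $-e_i$ and $\tfrac{1}{\sqrt n}(1,\dots,1)$, and the same extraction of the factor $\eps^{\kappa_j+1/2}$ on each face via \eqref{gij1-T}, leading to \eqref{diver-cond} through Theorem~\ref{thm:diver}. The only cosmetic difference is in bounding the residual face integrals uniformly in $\eps$: you use the comparison $1-\eps-|x'|\ge\tfrac12(1-|x'|)$ plus the standard-simplex Dirichlet integral, whereas the paper enlarges the face to the scaled simplex $\bT^{n-1}_{1-\eps}$ and invokes \eqref{w-meas-T}; both yield the same conclusion.
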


\begin{proof}
This proof will rely on Theorem~\ref{thm:diver}.
Denote $V:=\bT^n$ and let $\partial V$ be its boundary.
We introduce the sets
\begin{equation}\label{def-V-eps}
V_\eps:=\Big\{x\in \bR^n: x_1 > \eps,\dots, x_n>\eps, \; \sum_{i=1}^{n}x_i < 1-\eps\Big\}, \;\; \eps>0.
\end{equation}
The following properties of the sets $V_\eps$ follow immediately from the definition:
$V_\eps\subset V$, $V_\eps\subset V_{\eps'}$ if $0<\eps'<\eps$, and
$\cup_{\eps>0}V_\eps=V$.
Also,
$\partial V_\eps = \cup_{i=1}^n \overline{F}_\eps^i\cup \overline{H}_\eps$,
where
\begin{equation*}
F_\eps^i:=\Big\{x\in \bR^n: x_i=\eps, x_j> \eps \;\hbox{if}\; j\ne i,\; \sum_{j\ne i}x_j< 1-2\eps \Big\}
\end{equation*}
and
\begin{equation*}
H_\eps:=\Big\{x\in \bR^n: x_1> \eps,\dots, x_n>\eps, \; \sum_{j=1}^nx_j=1-\eps \Big\}.
\end{equation*}
The boundary of $\partial V_\eps$ is a polyhedron in $\bR^n$
and hence it is regular,
that is, the classical divergence formula \eqref{class-diver} is valid on $V_\eps$
(see e.g. Theorem~1, \S 5, Chapter~I in \cite{FS}).
Therefore, we can use Theorem~\ref{thm:diver}.

We shall also need the scaled simplex $\bT^n_b$, defined by
\begin{equation*}
\bT^n_b:=\Big\{x\in \bR^n: x_1>0,\dots, x_n>0,\; \sum_{i=1}^{n}x_i <b\Big\}, \quad b>0.
\end{equation*}
By changing the variables it follows from \eqref{weight-measure-2} that
\begin{equation}\label{w-meas-T}
\int_{\bT^n_b}\prod_{i=1}^n x_i^{\kappa_i-1/2} (b-|x|)^{\kappa_{n+1}-1/2}dx
=b^{|\kappa|+(n+1)/2}\prod_{i=1}^n B\Big(\kappa_i+\frac 12, \sum_{j=i+1}^{n+1}\big(\kappa_j+\frac 12\big)\Big)<\infty.
\end{equation}

Let $f$ and $h$ be the functions from the hypothesis of the theorem and
let $\vec{n}_\eps=(n_\eps^1,\dots, n_\eps^n)$ be the unit outward normal vector to $\partial V_\eps$.
Denote
\begin{equation}\label{def-G-eps}
G_\eps(x):= \sum_{i=1}^n\sum_{j=1}^n g^{ij}(x)n_\eps^i(x) \partial_j\tilde{f}(x)\tilde{h}(x) \ww(x),
\quad x\in \partial V_\eps.
\end{equation}
In light of Theorem~\ref{thm:diver} to prove Theorem~\ref{thm:divergence-T} it suffices to show that
\begin{equation}\label{int-G-eps}
\lim_{\eps\to 0}\int_{\partial V_\eps} G_\eps d\tau_\eps = 0,
\end{equation}
where $d\tau_\eps$ is the element of ``area" of $\partial V_\eps$.
Henceforth, we shall assume that $\eps>0$ is sufficiently small, e.g. $\eps<1/(n+1)$.

Denote
\begin{equation}\label{def-Xi}
X_i(x):= \sum_{j=1}^n g^{ij}(x) \partial_j \tilde f(x)
= 4x_i\Big[\partial_i \tilde f(x) - \sum_{j=1}^n x_j\partial_j \tilde{f}(x)\Big], \quad i=1, \dots, n,
\end{equation}
where we used \eqref{gij1-T}.
Then using the notation $\vec{X}(x):= (X_1(x), \dots, X_n(x))$ we have
\begin{equation}\label{rep-G-eps}
G_\eps(x)= \tilde{h}(x)\ww(x)\vec{X}(x)\cdot \vec{n}_\eps(x).
\end{equation}

To estimate $\int_{\partial V_\eps} |G_\eps|d\tau_\eps$
we have to estimate each of the integrals
$\int_{F_\eps^i} |G_\eps|d\tau_\eps$
and
$\int_{H_\eps} |G_\eps|d\tau_\eps$.

We next estimate $\int_{F_\eps^n} |G_\eps|d\tau_\eps$.
Observe that
$F_\eps^n - \eps e_n\subset \{x\in\bR^n: x_n=0\}$.
Hence, $\vec{n}_\eps(x)=-e_n$.
In turn, this and \eqref{rep-G-eps} yield
\begin{align*}
G_\eps(x)
&= -\tilde{h}(x)\ww(x)X_n(x)
\\
&= 4\prod_{\ell=1}^{n-1}  x_\ell^{\kappa_\ell-1/2}(1-|x|)^{\kappa_{n+1}-1/2}
x_n^{\kappa_i+1/2}\big[\partial_n \tilde f(x)-\sum_{j=1}^nx_j\partial_j\tilde f(x)\big]
\end{align*}
and using the fact that $f$ is a polynomial and $h\in L^\infty$ we get
\begin{equation*}
|G_\eps(x)| \le c\eps^{\kappa_n+1/2}\prod_{\ell=1}^{n-1}x_\ell^{\kappa_\ell-1/2}(1-|x|)^{\kappa_{n+1}-1/2},
\quad x\in F_\eps^n.
\end{equation*}
Denote
\begin{equation*}
\tilde{F}_\eps^{n-1}:=\Big\{x\in \bR^{n-1}: x_1> \eps,\dots, x_{n-1}>\eps,\; \sum_{j=1}^{n-1}x_j< 1-2\eps \Big\},
\end{equation*}
which is the projection of $F_\eps^n$ onto $\bR^{n-1}=\{x\in\bR^n: x_n=0\}$.
With the notation $x':=(x_1, \dots, x_{n-1})$ and $|x'|:=x_1+\dots+x_{n-1}$, we have
\begin{align}\label{int-Wn}
\int_{F_\eps^n} |G_\eps|d\tau_\eps
&= \int_{\tilde{F}_\eps^{n-1}}|G_\eps(x_1, \dots, x_{n-1}, \eps)|dx'\notag
\\
&\le c\eps^{\kappa_n+1/2}\int_{\tilde{F}_\eps^{n-1}} \prod_{\ell=1}^{n-1}x_\ell^{\kappa_\ell-1/2}(1-\eps-|x'|)^{\kappa_{n+1}-1/2}dx'
\\
&\le c\eps^{\kappa_n+1/2}\int_{\bT^{n-1}_{1-\eps}}\prod_{\ell=1}^{n-1}x_\ell^{\kappa_\ell-1/2}(1-\eps-|x'|)^{\kappa_{n+1}-1/2} dx'
\le c'\eps^{\kappa_n+1/2}.\notag
\end{align}
Here for the former inequality we used that $\tilde{F}_\eps^{n-1} \subset \bT^{n-1}_{1-\eps}$
and for the latter we used \eqref{w-meas-T}.
We similarly obtain
\begin{equation}\label{est-G-Fi}
\int_{F_\eps^i} |G_\eps|d\tau_\eps \le c\eps^{\kappa_i+1/2}
\quad\hbox{for}\;\; i\ne n.
\end{equation}

We now estimate $\int_{H_\eps} |G_\eps|d\tau_\eps$.
Clearly, $\vec{n}_\eps(x)=\frac{1}{\sqrt{n}}(1, \dots, 1)$
is the unit outward normal vector to $\partial V_\eps$ at each $x\in H_\eps$.
This and \eqref{def-Xi}-\eqref{rep-G-eps} imply that for $x\in H_\eps$
\begin{align*}
G_\eps(x)&= \frac{1}{\sqrt{n}}\tilde{h}(x)\ww(x)\sum_{i=1}^n X_i(x)
\\
&=\frac{4}{\sqrt{n}}\tilde{h}(x)\prod_{\ell=1}^n  x_\ell^{\kappa_\ell-1/2}(1-|x|)^{\kappa_{n+1}-1/2}
\sum_{i=1}^n x_i\Big[\partial_i \tilde f(x)-\sum_{j=1}^nx_j\partial_j\tilde f(x)\Big]\\
&=\frac{4}{\sqrt{n}}\tilde{h}(x)\prod_{\ell=1}^n  x_\ell^{\kappa_\ell-1/2}(1-|x|)^{\kappa_{n+1}+1/2}
\sum_{j=1}^nx_j\partial_j\tilde f(x)
\end{align*}
and hence
$|G_\eps(x)| \le c\eps^{\kappa_{n+1}+1/2}\prod_{\ell=1}^n  x_\ell^{\kappa_\ell-1/2}$,
$x\in H_\eps$.
The surface $H_\eps$ can be described by the equation
\begin{equation*}
x_n=1-\eps- x'\;\;\hbox{for}\;\; x':=(x_1, \dots, x_{n-1})\in \hat{F}_\eps^{n-1},
\end{equation*}
where
$\hat{F}_\eps^{n-1}:=\big\{x\in \bR^{n-1}: x_1> \eps, \dots, x_{n-1}> \eps,\; \sum_{j=1}^{n-1}x_j< 1-\eps \big\}$.
Therefore,
\begin{align*}
\int_{H_\eps} |G_\eps|d\tau_\eps
&= \sqrt{n}\int_{\hat{F}_\eps^{n-1}}|G_\eps(x_1, \dots, x_{n-1}, 1-\eps-|x'|)|dx'
\\
&\le c\eps^{\kappa_{n+1}+1/2}\int_{\hat{F}_\eps^{n-1}}\prod_{\ell=1}^{n-1} x_\ell^{\kappa_\ell-1/2}(1-\eps-|x'|)^{\kappa_n-1/2}dx'
\\
& \le c\eps^{\kappa_{n+1}+1/2}\int_{\bT^{n-1}_{1-\eps}}\prod_{\ell=1}^{n-1} x_\ell^{\kappa_\ell-1/2}(1-\eps-|x'|)^{\kappa_n-1/2}dx',
\end{align*}
where we used that $\hat{F}_\eps^{n-1} \subset \bT^{n-1}_{1-\eps}$.
We use again \eqref{w-meas-T} to obtain
\begin{equation}\label{int-H}
\int_{H_\eps} |G_\eps|d\tau_\eps
\le c\eps^{\kappa_{n+1}+1/2}.
\end{equation}

Combining estimates \eqref{int-Wn}, \eqref{est-G-Fi}, and \eqref{int-H} we arrive at
\begin{equation*}
\int_{\partial V_\eps} |G_\eps|d\tau_\eps \le c\sum_{i=1}^{n+1}\eps^{\kappa_i+1/2}.
\end{equation*}
From this, taking into account that $\kappa_i>-1/2$, $i=1, \dots, n+1$, we conclude that
$\lim_{\eps\to 0}\int_{\partial V_\eps} |G_\eps|d\tau_\eps =0$.
The proof of Theorem~\ref{thm:divergence-T} is complete.
\end{proof}

\begin{remark}\label{rem:green-T}
Observe that Theorem~\ref{thm:divergence-T} and \cite[Proposition 3.1]{KPX} are equivalent.
Namely, it can be shown that identity $\eqref{divergence-T}$ can be derived from $(3.2)$ in \cite{KPX}
and vise versa.
\end{remark}

\noindent
{\em Completion of the proof of Theorem~\ref{thm:Gauss-simpl}.}
As was shown above the current setting on the simplex is covered by the general setting described in \S\ref{subsubsec:setting}
and above we verified conditions {\bf C0-C5}.
Therefore, Theorem~\ref{thm:Gauss-simpl} follows by Theorem~\ref{thm:main}.

\section{Jacobi heat kernel on $[-1, 1]$}\label{sec:jacobi}

The classical Jacobi operator is defined by
\begin{equation}\label{def-Jacobi}
Lf(x):=\frac{\big[w(x)(1-x^2)f'(x)\big]'}{w(x)}, 
\end{equation}
where
\begin{equation*}
w(x):=(1-x)^{\alpha}(1+x)^{\beta}, \quad \alpha, \beta>-1.
\end{equation*}
We consider $L$ with domain $D(L):=\tilde\cP[-1,1]$ the set of all algebraic polynomials restricted to $[-1, 1]$.
We also consider $[-1, 1]$ equipped with the weighted measure
\begin{equation}\label{mes-int}
d\mu(x) := w(x) dx = (1-x)^{\alpha}(1+x)^{\beta} dx
\end{equation}
and the distance
\begin{equation}\label{dist-dist}
\rho(x,y) := |\arccos x - \arccos y|.
\end{equation}

We shall use the notation
\begin{equation}\label{int-ball}
B(x, r):=\{y\in [-1,1]: \rho(x, y)<r\} \quad\hbox{and}\quad V(x, r):=\mu(B(x, r)).
\end{equation}

As is well known \cite{Sz} the Jacobi polynomials $P_k$, $k\ge 0$, are eigenfunctions of $L$, that is,
\begin{equation}\label{Jacobi-eigenv}
LP_k= -\lambda_kP_k \quad\hbox{with}\quad \lambda_k=k(k+\alpha+\beta+1), \; k=0, 1, \dots.
\end{equation}
We consider the Jacobi polynomials $\{P_k\}$ normalised in $L^2([-1,1], \mu)$.
Then the Jacobi heat kernel $e^{tL}(x,y)$, $t>0$, takes the form
\begin{equation}\label{Jacobi-HK}
e^{tL}(x,y)=\sum_{k=0}^\infty e^{-\lambda_k t}P_k(x)P_k(y).
\end{equation}


\begin{theorem}\label{thm:Gauss-int}
The Jacobi operator $L$ in the setting described above
is essentially self-adjoint and $-L$ is positive.
Moreover, $e^{tL}$, $t>0$, is an integral operator whose kernel $e^{tL}(x, y)$ has Gaussian upper and lower bounds, that is,
there exist constants $c_1, c_2, c_3, c_4 >0$ such that for any $x,y \in [-1,1]$ and $t>0$
\begin{equation}\label{gauss-int}
\frac{c_1\exp\{- \frac{\rho(x,y)^2}{c_2t}\}}{\big[V(x, \sqrt t)V(y, \sqrt t)\big]^{1/2}}
\le  e^{tL}(x,y)
\le \frac{c_3\exp\{- \frac{\rho(x,y)^2}{c_4t}\}}{\big[V(x, \sqrt t)V(y, \sqrt t)\big]^{1/2}}.
\end{equation}
\end{theorem}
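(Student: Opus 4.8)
The plan is to obtain Theorem~\ref{thm:Gauss-int} as a special case of Theorem~\ref{thm:Gauss-simpl} by taking $n=1$, that is, by reading the interval $[-1,1]$ as an affine image of the one-dimensional simplex $\bT^1=(0,1)$. Concretely, I would use the change of variable $x = 2t-1$ (equivalently $t=(1+x)/2$), which maps $\bT^1=(0,1)$ diffeomorphically onto $(-1,1)$. Under this substitution the Jacobi weight $(1-x)^\alpha(1+x)^\beta\,dx$ on $[-1,1]$ becomes, up to a harmless positive multiplicative constant, the simplex weight $t^{\kappa_1-1/2}(1-t)^{\kappa_2-1/2}\,dt$ on $(0,1)$ with the identification $\kappa_1 = \beta + 1/2$ and $\kappa_2 = \alpha + 1/2$; note $\alpha,\beta>-1$ corresponds exactly to $\kappa_1,\kappa_2>-1/2$, so the full admissible parameter range matches.

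First I would check the operator correspondence: substituting $x=2t-1$ into the $n=1$ simplex operator $L$ from \eqref{def-L-simpl} (with $|\kappa|=\kappa_1+\kappa_2$) and comparing with the Jacobi operator \eqref{def-Jacobi} written in non-divergence form,
\begin{equation*}
Lf(x) = (1-x^2)f''(x) + \big(\beta-\alpha-(\alpha+\beta+2)x\big)f'(x),
\end{equation*}
one sees that the two operators agree up to the constant factor $4$ that already appears in \eqref{laplacian-simpl}; since $e^{t\cdot 4L}=e^{4tL}$ this factor is immaterial for the Gaussian bounds, exactly as noted in the completion of the proof of Theorem~\ref{thm:Gauss-simpl}. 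Next I would match the eigenvalues: $\lambda_k^{\text{Jac}} = k(k+\alpha+\beta+1)$ corresponds under $\kappa_1+\kappa_2=\alpha+\beta+1$ to $\lambda_k^{\text{simpl}} = k(k+|\kappa|+(n-1)/2)$ with $n=1$, i.e. $k(k+|\kappa|)$, again up to the factor $4$, and the $L^2$-normalized Jacobi polynomials pull back to the $L^2(\bT^1,\mu)$-orthonormal basis, so the kernel expansions \eqref{Jacobi-HK} and \eqref{simplex-HK} are transported into one another. Finally I would verify that the distances agree: $\rho$ on $[-1,1]$ from \eqref{dist-dist} is $|\arccos x - \arccos y|$, while the $n=1$ simplex distance \eqref{def-dist-simpl} is $\arccos(\sqrt{t_1 s_1}+\sqrt{(1-t_1)(1-s_1)})$; writing $t_1=\cos^2(\theta/2)$, $s_1=\cos^2(\psi/2)$ and using the addition formula $\cos\frac{\theta-\psi}{2}=\cos\frac\theta2\cos\frac\psi2+\sin\frac\theta2\sin\frac\psi2$ shows these coincide after the affine change of variable, and correspondingly $V(x,r)$ and the volumes in \eqref{B-simplex} transform compatibly (up to a fixed constant absorbed into $c_1,\dots,c_4$).

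With all these identifications in place, applying Theorem~\ref{thm:Gauss-simpl} in dimension $n=1$ with $\kappa_1=\beta+1/2$, $\kappa_2=\alpha+1/2$ immediately gives that the pulled-back operator is essentially self-adjoint with $-L$ positive and that its heat kernel satisfies the two-sided Gaussian bounds \eqref{gauss-simplex}; transporting back through $x=2t-1$ yields precisely \eqref{gauss-int}. The only real work, and the step I expect to be mildly delicate rather than genuinely hard, is the bookkeeping in the three matching computations — operator, eigenvalues/polynomials, and distance/volume — and in particular confirming that every discrepancy is either a global constant factor (absorbed into the heat-kernel constants) or a time rescaling (harmless since $e^{4tL}$ has the same class of Gaussian bounds as $e^{tL}$). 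No new geometric or analytic input beyond Theorem~\ref{thm:Gauss-simpl} is needed; alternatively, one could run the $n=1$ case of the general machinery of \S\ref{sec:simplex} directly, but deriving it from the already-proved simplex theorem is cleaner.
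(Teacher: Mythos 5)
Your proposal is essentially the paper's own proof: the paper likewise derives Theorem~\ref{thm:Gauss-int} from the $n=1$ case of Theorem~\ref{thm:Gauss-simpl} via the affine substitution $x_1=(x+1)/2$ with $\kappa_1=\beta+1/2$, $\kappa_2=\alpha+1/2$, matching operator, eigenfunctions, distance and ball volumes. Two bookkeeping slips that do not affect the argument: the substitution gives $L_Tg(x_1)=Lf(x)$ exactly (the factor $4$ in \eqref{laplacian-simpl} relates $L_T$ to the spherical weighted Laplacian, not to the Jacobi operator, and the eigenvalues agree exactly), and the distances satisfy $\rho_T(x_1,y_1)=\rho(x,y)/2$ rather than coinciding --- both discrepancies are harmless for the Gaussian bounds, as you yourself note.
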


\begin{proof}
We shall derive estimate \eqref{gauss-int} from the two-sided estimate for the heat kernel on the simplex
(Theorem~\ref{thm:Gauss-simpl}) in dimension $n=1$ by changing the variables.
Assume $\alpha, \beta>-1$  and let $\beta=:\kappa_1-1/2$ and $\alpha=:\kappa_2-1/2$.
Clearly, $\kappa_1, \kappa_2>-1/2$.

We assume that $x_1\in [0,1]$. We shall apply the change of variables
\begin{equation}\label{change}
x_1=\frac{1}{2}(x+1),\; x\in [-1,1]
\quad\hbox{or}\quad
x=2x_1-1.
\end{equation}

The differential operator $L_T:=L$ from \eqref{def-L-simpl} in the case $n=1$ takes the form
\begin{equation*}
L_T=x_1\partial_1^2-x_1^2\partial_1^2 +(\kappa_1+1/2)\partial_1 - (\kappa_1+\kappa_2+1)x_1\partial_1
\end{equation*}
and hence for any $g\in C^2[0, 1]$
\begin{equation*}
L_Tg(x_1)= (x_1-x_1^2)g''(x_1) +(\beta+1)g'(x_1) - (\alpha+\beta+2)x_1g'(x_1).
\end{equation*}
Denote $f(x):= g((x+1)/2)$ or $g(x_1)=f(2x_1-1)$.
A little calculus shows that
\begin{equation}\label{L-L}
L_Tg(x_1)
= (1-x^2)f''(x) +(\beta-\alpha)f'(x) -(\alpha+\beta+2)xf'(x)
= Lf(x),
\end{equation}
where $L$ is the Jacobi operator from \eqref{def-Jacobi}.

Denote $d\mu_T(x_1):=x_1^{\kappa_1-1/2}(1-x_1)^{\kappa_2-1/2}d x_1$.
Let $\tilde{P}_k$, $k=0, 1, \dots$, be the orthogonal and normalized polynomials in $L^2([0,1],\mu_T)$.
From \eqref{sim-eigen-sp} we have
\begin{equation}\label{eigen-T}
L_T\tilde{P}_k = -\lambda_k \tilde{P}_k,
\quad\hbox{where}\quad
\lambda_k:=k(k+\kappa_1+\kappa_2)= k(k+\alpha+\beta+1).
\end{equation}
Now, by \eqref{L-L}, \eqref{Jacobi-eigenv}, and \eqref{eigen-T} we obtain
\begin{equation}\label{poly-poly}
P_k(x)= 2^{-(\alpha+\beta+1)/2}\tilde{P}_k((x+1)/2).
\end{equation}

Let $\rho_T(x_1, y_1) := \arccos\big(\sqrt{x_1y_1}+\sqrt{1-x_1}\sqrt{1-y_1}\big)$ be the distance on $[0,1]$
from \eqref{def-dist-simpl} when $n=1$.
We claim that
\begin{equation}\label{d-d}
\rho_T(x_1, y_1) = \rho(x, y)/2,
\quad\hbox{where}\quad x_1=(x+1)/2, \;\; y_1=(y+1)/2.
\end{equation}
Indeed, by applying cosine to both sides it is easy to see that
$$
|\arccos u- \arccos v|= \arccos\big(uv + \sqrt{(1-u^2)(1-v^2)}\big), \quad \forall u, v\in [-1, 1].
$$
Therefore,
\begin{align*}
\rho_T(x_1, y_1)
&= |\arccos \sqrt{x_1}- \arccos \sqrt{y_1}|
= \big|\arccos \sqrt{(x+1)/2}- \arccos \sqrt{(y+1)/2}\big|
\\
& = \Big|\int_{\sqrt{(x+1)/2}}^{\sqrt{(y+1)/2}}\frac{1}{\sqrt{1-s^2}} ds\Big|
= \frac{1}{2}\Big|\int_x^y\frac{1}{\sqrt{1-v^2}} dv\Big|
=\frac{1}{2}|\arccos x -\arccos y|, 
\end{align*}
which implies \eqref{d-d}.
For the former equality above we applied the substitution $s=\sqrt{(v+1)/2}$.

From \eqref{doubling-T2} it follows that for any $x_1\in [0, 1]$ and $0<r\le 1$ we have
\begin{align*}
\mu_T(B_T(x_1,r))
&\sim r(1-x_1+r^2)^{\kappa_2}(x_1+r^2)^{\kappa_1}
\\
&\sim r(1-x+r^2)^{\alpha+1/2} (1+x+r^2)^{\beta+1/2},
\quad x_1=(x+1)/2.
\end{align*}
On the other hand, it is easy to see that for any $x\in[-1, 1]$ (see \cite[(7.1)]{CKP})
$$
\mu(B(x,r))\sim r(1-x+r^2)^{\alpha+1/2} (1+x+r^2)^{\beta+1/2}.
$$
Combining the above we arrive at
\begin{equation}\label{meas-meas}
\mu_T(B(x_1,r)) \sim \mu(B(x,r)),
\quad\hbox{where}\quad x_1=(x+1)/2, \; x\in[-1, 1].
\end{equation}

We are now prepared to complete the proof of Theorem~\ref{thm:Gauss-int}.
From \eqref{poly-poly} it follows that
$$
e^{tL}(x, y) = 2^{-(\alpha+\beta+1)}e^{tL_T}(x_1, y_1),
\quad\hbox{where}\quad x_1=(x+1)/2, \;\; y_1=(y+1)/2.
$$
Therefore, using the two-sided Gaussian bounds on the heat kernel $e^{tL_T}(x_1, y_1)$ from Theorem~\ref{thm:Gauss-simpl},
\eqref{d-d}, and \eqref{meas-meas} we conclude that the Gaussian estimates \eqref{gauss-int} are valid.
\end{proof}

\begin{remark}
Theorem~\ref{thm:Gauss-int} is also proved in \cite[Theorem~7.2]{CKP} using a different but related approach.
A totaly different proof of Theorem~\ref{thm:Gauss-int} in the case $\alpha, \beta>-1/2$ is given in \cite{NS} using special functions.
It should also be pointed that in the case when $\alpha=\beta >-1$ estimates \eqref{gauss-int} follow readily
by the two-sided bounds for the heat kernel on the ball in dimension $n=1$ (Theorem~\ref{thm:Gauss-ball}).
\end{remark}

\end{document}